\documentclass[11pt,letterpaper]{amsart}

\usepackage[english]{babel}
\usepackage{amsmath}
\usepackage{amssymb}
\usepackage{graphicx}
\usepackage{hyperref,latexsym}
\usepackage[dvipsnames]{xcolor}
\usepackage{pdfpages}
\usepackage{subcaption}
\usepackage{mathtools}

\newcommand{\backassign}{=:}
\newcommand{\dueto}[1]{\textup{\textbf{(#1) }}}
\newcommand{\infixand}{\text{ and }}
\newcommand{\longdownarrow}{{\mbox{\rotatebox[origin=c]{-90}{$\longrightarrow$}}}}
\newcommand{\longuparrow}{{\mbox{\rotatebox[origin=c]{90}{$\longrightarrow$}}}}
\newcommand{\mathD}{\mathrm{D}}
\newcommand{\mathd}{\mathrm{d}}
\newcommand{\mathi}{\mathrm{i}}
\newcommand{\nin}{\not\in}
\newcommand{\nobracket}{}
\newcommand{\tmmathbf}[1]{\ensuremath{\boldsymbol{#1}}}
\newcommand{\tmname}[1]{\textsc{#1}}

\newcommand{\tmop}[1]{\ensuremath{\operatorname{#1}}}
\newcommand{\tmtextit}[1]{\text{{\itshape{#1}}}}
\newcommand{\assign}{:=}

\newcommand{\tmdummy}{$\mbox{}$}

\newenvironment{proof*}[1]{\noindent\textbf{#1\ }}{\hspace*{\fill}$\Box$\medskip}
\newtheorem{theorem}{Theorem}[section]
\newtheorem{lemma}[theorem]{Lemma}
\newtheorem{remark}[theorem]{Remark}
\newtheorem{definition}[theorem]{Definition}
\newtheorem*{definition*}{Definition}
\newtheorem{conjecture}[theorem]{Conjecture}

\newcommand{\Sha}{\mathcal{S}_0}

\numberwithin{equation}{section}

\makeatletter
\def\paragraph{\@startsection{paragraph}{4}%
  \z@\z@{-\fontdimen2\font}%
  {\normalfont\bfseries}}
\makeatother

\begin{document}

\title[Dyadic reduction of the UMD conjecture]{A two sided linear estimate and a dyadic reduction of the UMD conjecture}

\author{Komla Domelevo}
\address{Institute of Mathematics, University of Würzburg, Germany}

\author{Stefanie Petermichl}
\thanks{S.P. is partially supported by the Alexander von Humboldt foundation}
\address{Institute of Mathematics, University of Würzburg, Germany}

\begin{abstract}
  We define a time faithful dyadic shift operator of complexity one, that is an antisymmetric antiinvolution.  We show that the Hilbert transform with values in a Banach space is $L^p$ bounded if and only if the dyadic shift is -- with a linear two sided norm dependence. The results reduce the famous UMD conjecture to a pair of simple dyadic operators.
\end{abstract}

\maketitle

\section{Introduction} \label{S: introduction}
The study of vector-valued singular integral operators was begun in the 1980’s, when {\tmname{Burkholder}} {\cite{Bur1983a}} and {\tmname{Bourgain}}
{\cite{Bou1983a}}  characterized the equivalence of a geometric property of a Banach space X and the boundedness of the tensor extension of the Hilbert transform $\mathcal{H}$ to X-valued $L^p$-functions. This  property of the Banach space X is now known as the UMD property and can be defined via unconditional convergence of martingale differences.

 The UMD spaces are therefore the right setting beyond Hilbert spaces, where one can sensibly extend classical results from the Calder\'{o}n-Zygmund and Littlewood-Paley theories to vector-valued functions. The UMD property also became important in connection to partial differential equations or numerical analysis, where functions take values in Banach spaces. UMD spaces have been intensively studied, we refer here to the text books by {\tmname{Pisier}} \cite{Pis2016}, {\tmname{Hyt\"{o}nen}}--{\tmname{van Neerven}}--{\tmname{Veraar}}--{\tmname{Weis}} \cite{HytNeeVerWei2016a} for an excellent introduction to the basic properties and beyond.
 
 \

It is thus a known fact that the Hilbert transform $\mathcal{H}$ acting on functions with values in a Banach space $X$ is
bounded if and only if $X$ has the UMD property. One way to define the latter is by using the sign toss operators acting on the Haar system $\{h_I:I\in \mathcal{D}\}$ by 
\[ \mathcal{T}_{\alpha} : h_{I} \mapsto \alpha_I h_{I} , \]
where $\alpha_I \in \{-1,+1\}$: $X$ has the UMD--$p$ property if and only if $\mathcal{T}_{\alpha}$ are uniformly $L^p$ bounded. The best uniform bound is noted the UMD--$p$ constant of $X$.

The precise relationship between the norm of the Hilbert transform and the UMD constant, however, remains unclear. If the $L^p$ norm of the Hilbert transform is noted $h_p$ and the UMD--$p$
constant is noted $m_p$, then it is known that
\begin{equation}
\label{eq: quadratic bounds}
m^{1/2}_p \leqslant h_p \leqslant m^2_p.
\end{equation}
The estimate on the left hand side is due to {\tmname{Bourgain}}
{\cite{Bou1983a}} and the estimate on the right hand side is due to
{\tmname{Burkholder}} {\cite{Bur1983a}}. It is an open question whether these
inequalities can be improved, ideally if they are linear. This question is known as the famous UMD conjecture, see for example O.6 in \cite{HytNeeVerWei2016a}: Is it true that 
\begin{equation}\label{eq: UMD conjecture}
m_p \lesssim h_p \lesssim m_p?
\end{equation}

\
In the paper \cite{GeiMonSak2010}, {\tmname{Geiss}}--{\tmname{Montgomery-Smith}}--{\tmname{Saksman}} replaced the Hilbert transform by the even singular operator $\mathcal{R}^2_1-\mathcal{R}^2_2$, the difference of squares of Riesz transforms in $\mathbb{R}^2$, and showed that the linear relation holds. If $r_p$ denotes the $L^p$ norm of $\mathcal{R}^2_1-\mathcal{R}^2_2$ with values in $X$, they showed 
\begin{equation}
\label{eq: linear bounds even}
 m_p \le r_p \le m_p.
 \end{equation}
Their remarkable estimates rely heavily on evenness of the operator $\mathcal{R}^2_1-\mathcal{R}^2_2$ as well as other previously observed mapping properties and relations with dyadic martingales. Other than the Hilbert transform, the difference of squares of Riesz transforms are the expectation of a martingale multiplier using a diagonal matrix, which facilitates working with the complexity 0 operators $\mathcal{T}_{\alpha}$.

\

In this paper we work with the Hilbert transform $\mathcal{H}$ but replace $\mathcal{T}_{\alpha}$ by the odd dyadic shift operator of complexity 1 densely defined by
\begin{equation}
\label{eq: definition of Sha}
 \Sha : h_{I_{\pm}} \mapsto \pm h_{I_{\mp}},
\end{equation}
where $I_\pm$ denote the left and right children of $I$ and $h_{I_\pm}$ their associated Haar functions.
If $\Sha$ has the $L^p$ bound $s_p$ and $\mathcal{H}$ has the $L^p$ bound $h_p$, our main results are the two sided linear bounds, as opposed to the quadratic bounds in \eqref{eq: quadratic bounds}. We show that 
\begin{equation}
\label{eq: linear bounds}
 s_p \lesssim h_p \le s_p,
 \end{equation}
where we strongly stress the exponents $1$ on $s_p$ and remark the coefficient $1$ on the right hand side. The coefficient on the left hand side is explicit as well, but smaller than 1. The estimates \eqref{eq: linear bounds} completely reduce the UMD conjecture \eqref{eq: UMD conjecture} (its failure or its proof) to the pair of dyadic operators $\mathcal{T}_\alpha$ and $\Sha$. See the next section for the formal statement of the results.

\

When the Banach space is the Hilbert space $\mathbb{R}$, then the situation is much simpler and relations are linear. Indeed, the sharp norm estimates depending on $p$ are known for a long time \cite{Bur1984a},\cite{Pic1972},\cite{Ess1984}. Observe that the norms of $\mathcal{T}_{\alpha}$ and $\mathcal{H}$ are not the same, so the linear relations have coefficients different from 1. The even operator $\mathcal{R}^2_1-\mathcal{R}^2_2$ considered in \cite{GeiMonSak2010}, however, has exactly the same $L^p$ bound as $\mathcal{T}_{\alpha}$, further emphasizing the relative simplicity of the even case. Both numeric estimates (i.e. the coefficients) in our estimates \eqref{eq: linear bounds} are new, even if $X=\mathbb{R}$. Indeed, it is new to obtain any estimate proportional to $h_p$ for a dyadic operator as it has previously not been possible to use orthogonality in a dyadic setting when estimating any shift operator. See the next paragraphs for more details.

\

One should appreciate that the definition of a UMD space involves a form of differential subordination condition much more restrictive than its analog in Hilbert spaces, as it targets each direction separately as opposed to a norm estimate on the increments (see e.g. {\tmname{Burkholder}} \cite{Bur2001a} for a discussion). For this and other reasons, the even case is decisively easier than the odd case considered here: the Hilbert transform forces dyadic operators with memory - with such a weak subordination condition, increments from different paths or at different times cannot be compared or estimated against one another. It is the aspect of the memory as present in our dyadic processes that causes difficulty and forces novel ideas in both of our estimates. In \cite{GeiMonSak2010} the authors attempted to treat the odd case, but their considerations replaced one continuous process by another (hence no issues with memory or jumps as a dyadic reduction is absent). The authors showed that the Hilbert transform is bounded if and only if its stochastic representation is - we implicitly use this here without mention. Further, the Hilbert transform is tied to the principle of orthogonality and we are in a setting where the concept of angles is a priori absent. Yet in our novel strategies for upper and lower estimates, we develop a novel concept of dyadic orthogonality to obtain our estimates.

\

\paragraph{History of dyadic shifts}
The dyadic martingale transform $\mathcal{T}_{\alpha}$ has often served as a simpler analog of singular integral operators such as the Hilbert transform. Aside from 
{\cite{Bou1983a}} and {\cite{Bur1983a}} there are numerous other examples, {\cite{Nazarov}}, {\cite{TreVol1997}}, {\cite{PV2002}}, to name a few. But its precision is not always sufficient, especially if one seeks quantitative or optimal bounds. In such cases, the classical Haar shift $\mathcal{S}_{\tmop{cl}}$ from \cite{Pet2000} has had great success: 
\[\mathcal{S}_{\tmop{cl}} : h_I \rightarrow (h_{I_+} -
h_{I_-}) / \sqrt{2}.\]

In particular it is proven by {\tmname{Petermichl}} in \cite{Pet2000} that a non zero multiple of the Hilbert transform is an average of classical Haar shifts over different dyadic grids. Through averaging it was immediately clear that $h_p \lesssim s^{\tmop{cl}}_p$ if
$s^{\tmop{cl}}_p$ is the $L^p$ bound for the classical $X$--valued Haar shift. Notice that since the main tool here is taking an expectation, only the upper estimate is linear. It was also shown by
{\tmname{Petermichl}}--{\tmname{Pott}} {\cite{PetPot2003a}} that $s^{\tmop{cl}}_p
\leqslant m^2_p$. Together, a very short and simple deterministic alternative to {\tmname{Burkholder}}'s direction in \ref{eq: quadratic bounds} with the quadratic bound was thus obtained. But meanwhile the theory had been extended significantly: by means of a different type of discretization, {\tmname{Figiel}} proved in \cite{Fig1990} the same quadratic estimate for more general Calder\'{o}n--Zygmund operators. Through the use of the generalization of the Haar shift by {\tmname{Hyt\"{o}nen}} \cite{H} to represent Calder\'{o}n--Zygmund operators, {\tmname{Pott}}--{\tmname{Stoica}} were able to get linear upper bounds for a large class of {\it even} Calder\'{o}n--Zygmund operators in \cite{Pot2014}.

\

The classical
shift has proven a useful model for the Hilbert transform for some
important applications outside the geometry of Banach spaces. Among others, it solved two open questions, delivering at least two optimal bounds. One is found in {\tmname{Petermichl}} \cite{Pet2000} and {\tmname{Nazarov--Pisier--Treil--Volberg}} \cite{NPTV}, where the precise dimensional growth of a Hankel operator with matrix symbol by means of \tmname{Pisier}'s strong operator BMO norm as well as related estimates involving paraproduct operators with matrix symbol, a matrix Carleson embedding,  and  $H^1$ multipliers was established. The Haar shift was instrumental in closing the loop between these objects. Another application was in the theory of weights,  where the so-called $A_2$ conjecture was solved by {\tmname{Petermichl}} in \cite{Pet2007}, that is, the optimal bound for the Hilbert transform in weighted $L^2$ spaces in terms of the Muckenhoupt characteristic of the weight. The idea of the shift operators has been generalized in {\tmname{Petermichl--Treil--Volberg}} \cite{PTV} and {\tmname{Hyt\"{o}nen}} \cite{H}. The latter gives a beautiful representation for all Calder\'{o}n--Zygmund operators by developing deep ideas from {\tmname{Nazarov--Treil--Volberg}} \cite{NTVnonhom} and resolved the $A_2$ problem for all Calder\'{o}n--Zygmund operators. It is interesting to note, and somewhat mirrors what we see here in the subject of this paper, that the optimal weighted bound for the {\it even} Beurling--Ahlfors transform was proved before that for the Hilbert transform with a novel but simpler proof by {\tmname{Petermichl}}--{\tmname{Volberg}} \cite{PV2002}, via the {\it even} dyadic martingale multiplier. However, the numeric $L^p$ bound for the Beurling--Ahlfors transform, conjectured to be identical to the uniform $L^p$ bound of $\mathcal{T}_\alpha$, remains a puzzling open question.

\

\paragraph{The choice of the dyadic model.}
The classical shift lacks a number of defining properties and similarities with the Hilbert transform. 
The novel dyadic $\Sha$ defined in \eqref{eq: definition of Sha} does average to a $0$ multiple of the Hilbert transform via the ideas in \cite{Pet2000} and
is therefore not applicable in the same way as the classical shift.
But it is in other ways incomparably closer to the Hilbert transform. Its square is the negative identity, it is antisymmetric and -- apparently important for this subject -- it has no even component. Another decisive feature is that it is time faithful, meaning that the coefficient swap happens at the same interval size or time. Endowed with the right sign swap on this exchange, one obtains a form of dyadic orthogonality that is crucial to our arguments. The classical shift has none of these features.

It is interesting to recall that {\it even} $X$--valued singular operators have posed fewer problems in  {\tmname{Geiss}}--{\tmname{Montgomery-Smith}}--{\tmname{Saksman}} \cite{GeiMonSak2010} because the defining operator for the UMD property, $\mathcal{T}_\alpha$, has the natural modeling properties for $\mathcal{R}^2_1-\mathcal{R}^2_2$, as was beautifully observed by the authors in their text.

\paragraph{The lower bound.} The proof of Theorem \ref{theorem_lower} requires the perfect choice of dyadic model that responds to the needs of the Hilbert transform. Indeed, $\mathcal{S}_0$ mimics the
Cauchy--Riemann equations `in the probability space'. To obtain the lower bound, we add several new elements to a brilliant
argument by {\tmname{Bourgain}} \cite{Bou1983a} using high frequency modulation in Fourier space. In his work, he
needed to apply the Hilbert transform twice to control the martingale transforms. Our relationship of the Hilbert transform to $\mathcal{S}_0$ can be made to be much more direct and we therefore manage to only use
the Hilbert transform once, yielding our linear estimate from below. {\tmname{Bourgain}}'s argument uses a clever random generator to get the sign tosses of the dyadic random walks. He then increases the frequency of each increment and uses the mapping properties of the Hilbert transform to obtain the control he needs in the strong form. In our argument, we change the random generator to fit our operator, which forces it to have memory - a typical difficulty when dealing with shift operators of positive complexity. We also proceed by increasing the frequency, but use a novel striking similarity between $\mathcal{S}_0$ and the Hilbert transform in the dualized form. 

 We want to mention the use of the so-called (quasi-) periodization in the Haar system, loosely based on high frequency modulation in conjunction with $\mathcal{S}_{\tmop{cl}}$ and the Hilbert transform in the technical paper by {\tmname{Karakroumpas}}--{\tmname{Treil}} \cite{KT}. This work in turn builds in part on ideas of {\tmname{Nazarov}} in his work to disprove the so-called Sarason conjecture on the two weight problem for the Hilbert transform - a problem that turned out extremely difficult, subsequently finalized by {\tmname{Lacey}} \cite{Lacey}. After completion of this text here, we used in our seminal paper with {\tmname{Treil}} and {\tmname{Volberg}} quasi-periodization on the Haar system to transfer a dyadic counter example to the famous matrix $A_2$ conjecture to the Hilbert transform via an operator involving both $\mathcal{S}_{\tmop{cl}}$ and $\mathcal{S}_{0}$ \cite{DPTV}. Notice that in the presence of a weight, the translation invariance of the $L^p$ spaces disappears, which is a crucial feature in the argument by {\tmname{Bourgain}} as well as our argument in this text. However, constructing a counter example to a statement is very different from giving a positive result, which is what our task is here. The classical shift operator cannot be used to get the lower estimate on the norm of the Hilbert transform in the way we obtain it in this paper.

\paragraph{The upper bound.} Theorem \ref{theorem_upper} is interesting in the light that the averaging procedure used in \cite{Pet2000} fails and that the coefficient in the estimate we obtain by our means is 1. 
Indeed, the connection established in {\cite{Pet2000}} between
the Hilbert transform $\mathcal{H}_{\mathbb{R}}$ on the real line and the
classical Haar shift $\mathcal{S}_{\tmop{cl}}$ states that the Hilbert transform is, up to a non-zero universal
multiplicative constant, an average of translated and dilated classical Haar
shifts $\mathcal{S}_{\tmop{cl}}^{\alpha, r}$ of the form
\[ \mathbb{E}^{\alpha} \mathbb{E}^r  \mathcal{S}_{\tmop{cl}}^{\alpha, r} =c
   \mathcal{H}_{\mathbb{R}} , \; \; c\neq 0. \]
Here $\alpha$ denotes the translation parameter, $r$ the dilation parameter,
$\mathbb{E}^{\alpha}$ the averaging operator with respect to translations (a limiting procedure was involved), and
$\mathbb{E}^r$ the averaging operator with respect to dilations (logarithmic averages).
The upper--bound $\|
\mathcal{H}_{\mathbb{R}} \|_{L^p_X \rightarrow L^p_X} \lesssim \|
\mathcal{S}_{\tmop{cl}} \|_{L^p_X \rightarrow L^p_X}$ follows. Unfortunately, we prove in Section \ref{S: averaging dyadic Hilbert
transform} that this strategy does not work for
$\Sha$ since this averaging procedure yields 
\[ \mathbb{E}^{\alpha} \mathbb{E}^r  \Sha^{\alpha, r} = 0. \]

\
However, the comparison of the two operators can be achieved through
stochastic representations of the Hilbert transform. It will reveal a profound
connection between $\mathcal{H}$ and $\Sha$, which was an important
motivation for us for bringing a `dyadic Hilbert transform' into play. It is well known and an easy 
consequence of the Cauchy--Riemann equations and the Ito integral formula that the Hilbert transform 
has a representation through the use of harmonic functions and two--dimensional Brownian motion in the disc via stochastic integration. In our construction we approximate these two one--dimensional Brownian motions
by extracting two specific well chosen one--dimensional discrete random walks out of the dyadic tree.
These discrete random walks have memory and the operator $\Sha$ forces this feature. Via methods in stochastic numerical analysis we show a convergence 
between sampled discrete martingales and those governed by Brownian motion. The upper estimate in Theorem \ref{theorem_upper} follows. 
This approach is very different from all representation theorems of singular operators via dyadic shifts as the estimate is obtained via a convergence of a numerical model and not via a representation and convexity.

\

Even if the proofs for the upper and lower bound appear to be very different, they have an ideological common theme: an explanation of the stochastic integral representation for $\mathcal{H}$ in a dyadic world.

\

\paragraph{Quantative estimates when $X=\mathbb{R}$.}

Finally, it is worth noting that the estimate $s_p \le c_0^{-1} h_p$ has new implications even in the real valued case. Indeed, when $X=\mathbb{R}$ or $X=\mathbb{T}$ there hold 
\[
h_p= \left\{ \begin{array}{ll} \tan \frac{\pi}{2p}& 1<p\le 2\\ \cot  \frac{\pi}{2p}& 2<p<\infty \end{array}\right.,
\qquad 
m_p= \left\{ \begin{array}{ll} (p-1)^{-1}& 1<p\le 2\\ p-1& 2<p<\infty \end{array}\right..
\]

$\Sha$ inherits a multiple of the estimate for $h_p$. The constant $c_0$ can be written down as an integral
 and expressed as a multiple of the Catalan constant $G$, that can be evaluated numerically. We refer to Section \ref{S: Constant_c0} for the details. It follows from the explicit expression of $c_0$ that we can compare
\[
	h_p \leq s_p  \leq 1.34689 \ h_p.
\]

The estimate of the exact norm of the Hilbert transform uses orthogonality and to the best of our knowledge,
there has not been previously any way to incorporate orthogonality into dyadic estimates.
Known estimates for shift operators -- also for odd ones -- are thus large multiples of $m_p$ usually depending upon the complexity of the shift operator.
On the contrary, our estimates yield for large $p$, where $m_p \sim p$, the estimate
\[
s_p  \leq c_0^{-1}\ h_p \sim \frac{\pi}{4G} p \leq 0.85746 \ m_p.
\]
We think that finding an estimate proportional to $h_p$ in the dyadic setting is remarkable.
Indeed, estimates for discrete operators can be decisively more difficult than estimates for their continuous counterparts.
We cite in this direction the impressive sharp estimate of  {\tmname{Ba{\~n}uelos--Kwa\'{s}nicki}} \cite{BK} for the Hilbert transform on integers. In its predecessor, our paper {\cite{DomPet2014}}, we proved such sharp estimates for the squares of discrete Riesz transforms resembling for example the continuous operator $\mathcal{R}^2_1-\mathcal{R}^2_2$. Again, due in part to the evenness of the operator under consideration, the solution in {\cite{DomPet2014}} is a lot simpler than that for the discrete Hilbert transform in {\cite{BK}}.

\section{Definitions and statements of the main results}
We will consider functions on domains $\mathbb{H}=\mathbb{R}$ or $\mathbb{H}=\mathbb{T}$ where $\mathbb{T}=[-\pi,\pi)$ is the one dimensional torus $\mathbb{T}=\partial \mathbb{D}$, the boundary of the unit disc $\mathbb{D}$. The Hilbert transform on $\mathbb{R}$ respectively on $\mathbb{T}$ is
\[
\mathcal{H}_{\mathbb{R}}f(x)=p.v.\frac1{\pi}\int_{\mathbb{R}}\frac{f(t)}{x-t}\mathd t, \qquad
\mathcal{H}_{\mathbb{T}}f(x)=p.v.\frac1{2\pi}\int_{\mathbb{T}}f(t)\cot\big(\frac{x-t}{2}\big)\mathd t.
\]

\

If $X$ is a Banach space and $1<p<\infty$ then the Bochner-Lebesgue space $L^p_X(\mathbb{H})$ consists of all strongly measurable functions $f:\mathbb{H}\to X$ such that 
\[
	\| f \|_{L^p_X (\mathbb{H})} \assign \Big( \int_{
  	 \mathbb{H}}  | f (x)
	|_X^p \mathd x \Big)^{1 / p} < \infty .
\]
Recall that $f$ is called weakly measurable if the map $t\mapsto \langle f(t),x^*\rangle$ is measurable for all $x^*\in X^*$ and separably valued if there exists a separable subspace $X_0\le X$ such that $f(t)\in X_0$ a.s. and finally $f$ is called strongly measurable if it is both separably valued and weakly measurable.  

If $f$ real valued and $f\in C_c^1(\mathbb{H})$ and $x\in X$ then write $f\otimes x : \mathbb{H}\to X,(f\otimes x)(t)=f(t)x$ and note the set of all finite linear combinations of this type $C_c^1(\mathbb{H}) \otimes X$, producing a dense subspace of $L^p_X(\mathbb{H})$. This allows us to densely define an operator $T$ originally defined on real valued functions on $X$ valued functions by defining $T$ on $f=\sum_{k=1}^N f_k \otimes x_k \in C_c^1(\mathbb{H}) \otimes X$ by 
\[
Tf(t)=\sum_{k=1}^N Tf_k(t)x_k \in X.
\]
We will use this definition without mention throughout the text.
If $X$ is reflexive then $L^p_X(\mathbb{H})^*=L^{p'}_{X^*}(\mathbb{H})$, where as usual $1/p+1/p'=1$. 

\

The dyadic filtration $(\mathcal{F}_n)_{n\ge 0}$ on $\Omega=\{-1,1\}^\mathbb{N}$ endowed with the probability measure $\mathbb{P}=\otimes (\delta_1+\delta_{-1})/2$ is the filtration associated to the sequence of coordinate functions $\varepsilon_n: \Omega\to \{-1,1\}$. Thus, we set $\mathcal{F}_n=\sigma(\varepsilon_0,...,\varepsilon_{n-1})$ for $n\ge 1$ and $\mathcal{F}_0=\{\emptyset, \Omega \}$. The variables $\varepsilon_n$ are independent and take the values $\pm 1$ with equal probability. Note that $\mathcal{F}_n$ admits exactly $2^n$ atoms. 

An $X-$valued martingale $f_n:\Omega \to X$ adapted to $(\mathcal{F}_n)$ is characterized by the property that $\forall n\ge 1, (f_n-f_{n-1})(\varepsilon_0,...,\varepsilon_{n-1})=\varepsilon_{n-1}d_{n-1}f(\varepsilon_0,...,\varepsilon_{n-2}),$ where it is implicit that $d_{n-1}f$ depends only upon $\varepsilon_1,...,\varepsilon_{n-1}$.

The classical definition of the UMD property, standing for \emph{Unconditional Martingale Difference}, is the following:

\begin{definition*}
A Banach space $X$ is said to have the UMD--p property if there exists a constant $C_p$ so
that for any sign tosses $\alpha_k = \pm 1$, $k\in\mathbb{N}$,
for any martingale $f_n$, there holds
\[ \big\| \sum_{k\geqslant 0} \alpha_{k} \mathd_k f (\varepsilon_0, \ldots,
   \varepsilon_{k-1}) \varepsilon_{k} \big\|_{L_X^p} \leqslant C_p \big\|
   \sum_{k\geqslant 0} \mathd_k f (\varepsilon_0, \ldots, \varepsilon_{k-1}) \varepsilon_{k} \big\|_{L_X^p} . \]
The best such constant for $L^p$ is called the UMD--$p$ constant of $X$ and denoted here $m_p$. 
\end{definition*}

The Walsh functions are for any finite $A\subset \mathbb{N}$, $\omega_A=\Pi_{n\in A}\varepsilon_n$ with 
$\omega_{\emptyset}=1$. $\{ \omega_A \mid  A\subset \{1,...,n\} \}$ resp. $\{\omega_A \mid A {\text{ finite}} \}$ is an orthonormal basis of $L^2(\Omega,\mathcal{F}_n,\mathbb{P})$ resp. $L^2(\Omega,\mathcal{F}_\infty,\mathbb{P})$.

In analysis it is customary to write $r_n$ for the Rademacher functions instead of $\varepsilon_n$. There, $(r_n)_{n\ge 1}$ are defined on the interval $I_0=[0,1)$ endowed with Lebesgue measure by $r_n(x)=\tmop{sign}\sin(2^n\pi x)$ for $n\ge 1$. The sequence $(r_n)$ has the same distribution on $I_0$ as the sequence $(\varepsilon_n)$ on $(\Omega, \mathbb{P})$. Let $\mathcal{F}_n=\sigma(r_1,...,r_n)$, then $\mathcal{F}_n$ is generated by the $2^n$ atoms 
$$\mathcal{D}_n=\left\{ [m 2^{- n}, (m + 1) 2^{- n}) : 0\le m \le 2^n-1 \right\}.$$ 
$\mathcal{F}_n$ is also the sigma algebra generated by the first $2^{n}$ Haar functions $h_I=|I|^{-1/2}(\tmmathbf{1}_{I_+}-\tmmathbf{1}_{I_-})$ when counting as follows: Let $I_n^m=[m 2^{-n}, (m+1) 2^{-n})$ and set $h_0=1$ and count $h_{2^n+k}=h_{I_n^k}$ for $n\ge 0$ and $0\le k\le 2^n-1$.

The dyadic system over the interval $I_0$ can be extended an orthonormal system on $\mathbb{R}$ in many ways, one of which is this. The set of dyadic intervals of generation (or time) $k$ is $\mathcal{D}_k$ on $\mathbb{R}$ is
$ 
\mathcal{D}_k \assign \left\{ [m 2^{- k}, (m + 1) 2^{- k}) : m\in\mathbb{Z} \right\}  
$ 
so that $\mathcal{D}=\bigcup_{k\in\mathbb{Z}} \mathcal{D}_k$.
We note further $\mathcal{D}^- \subset
\mathcal{D}$ the subset of left children and $\mathcal{D}^+ \subset
\mathcal{D}$ the subset of right children in the dyadic grid $\mathcal{D}$
as well as $\mathcal{D}_k^-
\assign \mathcal{D}_k \cap \mathcal{D}^-$ the left children if
$\mathcal{D}_k$, and $\mathcal{D}_k^+ \assign \mathcal{D}_k \cap
\mathcal{D}^+$ the right children in $\mathcal{D}_k$. 

Now, if $I\in\mathcal{D}$ is a given dyadic interval,
we note $\mathcal{D}(I)\assign\{J\in\mathcal{D}: J\subset I \}$,
and $\mathcal{D}^\pm(I)\assign\{J\in\mathcal{D}^\pm: J\subsetneq I \}$.
For example with $I_0\assign[0,1)$, we have $\mathcal{D}(I_0) = \{
I_0 \} \cup \mathcal{D}^-(I_0) \cup \mathcal{D}^+(I_0)$.

The orthonormal basis expansion of a $L^2$ function $f:\mathbb{R}\to\mathbb{R}$ is:
\begin{equation}
	f (x) = \sum_{I \in \mathcal{D}} (f, h_I) h_I (x)
			= \sum_{k\in\mathbb{Z}} \sum_{I \in \mathcal{D}_k} (f, h_I) h_I (x),
\label{eq: Haar decomposition on R}
\end{equation}
with $(f,g)=\int_{\mathbb{R}} f(x) g(x) \mathd x$
. If $f$ has values in a Banach space $X$ and is strongly measurable, then $(f, h_I) \in X$, defined in the usual way as was described above.

\

Unless otherwise specified, we consider Haar expansion
for functions $f$ with support on $I_0=[0,1)$ and therefore write
\[ 
	f (x) = \langle f \rangle_{I_0} + \sum_{I \in \mathcal{D}(I_0)} (f, h_I) h_I (x), \qquad \tmmathbf{1}_I\langle f \rangle_{I} = \langle f \rangle_{I_0} +  \tmmathbf{1}_I\sum_{J \supsetneq I} (f, h_J) h_J (I),
\]
where we noted $h_J (I)=|I|^{-1}\int_I h_J(t)\mathd t = \pm 1/\sqrt{|J|}$ for $I\in J_\pm$ and 
where the latter holds for
 $I\in\mathcal{D}(I_0)$ and is  the so-called MRA property.


As is classical, the Haar series
deliver the martingale difference sequence $f_n$ of the martingale $(f_n)_{n\geqslant 0}$:

\[
f_n(x) 	= \mathbb{E}\left( f \vert \mathcal{F}_n \right)
		\assign \sum_{I \in \mathcal{D}_n} \langle f \rangle_I \tmmathbf{1}_I(x).
\]
This defines the dyadic martingale on the dyadic filtration $(\mathcal{F}_n)_{n\geqslant 0}$,
with the martingale property
$
\mathbb{E}\left( f_m \vert \mathcal{F}_n \right) = f_n(x)
$
for any $m,n$ with $m\geqslant n$.
One way to link back to the sign tosses is by arrival:
\[
	\varepsilon_I(x)=(\tmmathbf{1}_{I_+} - \tmmathbf{1}_{I_-}) (x)=|I|^{1/2} h_I(x)
\]
so that $\varepsilon_I\in\{-1,+1\}$, and setting
\[
	\mathd_I f	\assign  \frac{1}{2}  (\langle f\rangle_{I_+} - \langle f \rangle_{I_-})
   				= \langle f\rangle_{I_+} - \langle f \rangle_{I}
   				= - (\langle f\rangle_{I_-} - \langle f \rangle_{I}),
\]
we can rewrite the Haar series as
\[
	f (x) = \langle f \rangle_{I_0}  + \sum_{I \in \mathcal{D}} \mathd_I f\ \varepsilon_I(x),
	\quad
	\tmmathbf{1}_I \langle f \rangle_{I} = \langle f \rangle_{I_0}  + \tmmathbf{1}_I\sum_{J \supsetneq I} \mathd_J f\ \varepsilon_J(I),
\]
where we noted $\varepsilon_J (I)=\pm 1$ for $I\subset J_\pm$.
Introducing the two random variables
\[
	\mathd_n f(x) = \sum_{I \in \mathcal{D}_n} \mathd_I f\ \tmmathbf{1}_I(x),
	\quad
	\varepsilon_n(x) = \sum_{I \in \mathcal{D}_n} \varepsilon_I(x).
\]
and using the expression of $\langle f \rangle_I$ above allows us to rewrite
\[
	f_n(x)
	= \langle f \rangle_{I_0}  + \sum_{k=0}^{n-1} \mathd_k f(x)\ \varepsilon_k(x)
	= \langle f \rangle_{I_0}  + \sum_{k=0}^{n-1} \mathd_k f(\varepsilon_0(x),\ldots,\varepsilon_{k-1}(x)) \ \varepsilon_k(x).
\]

\subsection{Results}

The dyadic shift of $\mathbb{R}$ is densely defined on the Haar system on $\mathbb{R}$ by
\[\mathcal{S}_0:h_{I_\pm}\mapsto \pm h_{I_\mp}.\]
When defining the dyadic shift on $I_0$, we set
$\Sha: h_{I_{\pm}}\to\pm h_{I_{\mp}},\ \forall I\subseteq I_0$, together with $\Sha(\mathbf{1}_J)=0$, $\Sha(h_J)=0$.
Which operator is used will be clear from the context.

\

The following are our main results. The first theorem corresponds to {\tmname{Burkholder}}'s direction and is a \emph{linear} upper estimate for the Hilbert transform with coefficient 1:

\begin{theorem}\label{theorem_upper}
Let $X$ be a UMD Banach space.
We have
  \[ \| \mathcal{H} \|_{L^p_X \rightarrow L^p_X}
  \leqslant
  \| \Sha \|_{L^p_X \rightarrow L^p_X} . \]
\end{theorem}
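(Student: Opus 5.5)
We reduce to the Hilbert transform on the torus and work with its stochastic representation. By transference it suffices to treat $\mathcal{H}_{\mathbb{T}}$ on $X$-valued trigonometric polynomials $f=\sum f_k\otimes x_k$ of mean zero. We test the operator against $g\in L^{p'}_{X^*}$, so we must show $|(\mathcal{H}f,g)|\le s_p\|f\|_{L^p_X}\|g\|_{L^{p'}_{X^*}}$.

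Let $u$ and $v$ be the harmonic extensions of $f$ and $\mathcal{H}f$ to the disc, so that $u+\mathi v$ is analytic. Let $B=(B^1,B^2)$ be planar Brownian motion started at $0$ and stopped at the exit time $\tau$ of $\mathbb{D}$. Itô's formula and the Cauchy--Riemann equations give
\[ u(B_\tau)=\int_0^\tau \partial_1 u\,\mathd B^1+\partial_2 u\,\mathd B^2,\qquad v(B_\tau)=\int_0^\tau -\partial_2 u\,\mathd B^1+\partial_1 u\,\mathd B^2 . \]
Thus $\mathcal{H}$ is the conditional expectation onto the boundary of the martingale transform by the rotation matrix $\begin{pmatrix}0&-1\\1&0\end{pmatrix}$. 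This rotation is antisymmetric and squares to $-\mathrm{Id}$, exactly like $\Sha$. Since $B_\tau$ is uniformly distributed on $\mathbb{T}$, it suffices to bound $\mathbb{E}\langle v(B_\tau),G(B_\tau)\rangle$ in terms of $\|u(B_\tau)\|_{L^p_X}\,\|G(B_\tau)\|_{L^{p'}_{X^*}}$. We do this by discretization, and Jensen's inequality for conditional expectations then gives the bound for the boundary functions with constant $1$.

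The key construction is a discrete approximation inside the dyadic tree. Fix a step $\delta$. Using the sign tosses of two consecutive generations $2k$ and $2k+1$, we build two $\pm1$ increments $\xi^1_k,\xi^2_k$ that play the roles of $\Delta B^1,\Delta B^2$. They are read off from a parent interval $I$ and its children $I_\pm$: the generation-$(2k+1)$ Haar functions $h_{I_+},h_{I_-}$ carry the two directions. We then define the discrete Euler scheme
\[ U_{k+1}=U_k+\partial_1 u(Z_k)\xi^1_k\sqrt\delta+\partial_2 u(Z_k)\xi^2_k\sqrt\delta, \]
where $Z_k$ is the discrete random walk these increments drive, stopped near the boundary. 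The plan is to arrange the correspondence so that applying $\Sha$ ($h_{I_\pm}\mapsto\pm h_{I_\mp}$) to $U$ produces exactly
\[ V_{k+1}=V_k-\partial_2 u(Z_k)\xi^1_k\sqrt\delta+\partial_1 u(Z_k)\xi^2_k\sqrt\delta . \]
The time-faithful sign swap on sibling intervals is precisely what rotates $(\xi^1,\xi^2)$ by $90^\circ$. The walk $Z$ depends on past tosses, so it has memory, and the coefficients are frozen at $Z_k$, which is measurable with respect to the parent generation. Granting the correspondence, $\|V_N\|_{L^p_X}\le s_p\|U_N\|_{L^p_X}$ holds on the dyadic probability space, hence also after pairing with $G(Z_N)$.

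Finally we pass to the limit $\delta\to0$. We use weak convergence of $Z$ to $B$ (Donsker, or strong approximation on a common space) together with standard weak and strong error estimates for the Euler scheme in stochastic numerical analysis. These should show $U_N\to u(B_\tau)$ and $V_N\to v(B_\tau)$ in $L^p_X$, and $\mathbb{E}\langle V_N,G(Z_N)\rangle\to\mathbb{E}\langle v(B_\tau),G(B_\tau)\rangle$. Because $f$ is a trigonometric polynomial, $u$ is smooth up to the boundary, so the estimates are scalar estimates tensored with finitely many $x_k$ and need no UMD machinery. The UMD hypothesis enters only to make $s_p$ finite.

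The main obstacle is the exact algebraic matching of $\Sha$ with the rotated scheme while keeping the coefficient $1$. The increments must be genuinely orthogonal and symmetric at each step so that $Z$ converges to planar Brownian motion, and $\Sha$ must act as the $90^\circ$ rotation rather than mixing the two directions with the frozen coefficients. I would first check this on a single two-generation block by computing $\Sha$ on $a\,h_I+b_+h_{I_+}+b_-h_{I_-}$ and choosing the identification $\xi^1\leftrightarrow$ the combination $(h_{I_+}+h_{I_-})$, $\xi^2\leftrightarrow$ the combination $(h_{I_+}-h_{I_-})$, suitably normalized. If that fails, I would use only the odd generations, setting $\xi^1=\varepsilon_{2k}$ and $\xi^2=\varepsilon_{2k+1}\varepsilon_{2k}$. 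The boundary stopping error and the handling of the $h_I$ components left untouched by $\Sha$ are secondary, and should vanish in the limit.
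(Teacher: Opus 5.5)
Your architecture matches the paper's: harmonic extension with It{\^o} and Cauchy--Riemann, a dyadic walk on which $\Sha$ acts as a quarter turn on increments, a discrete martingale transform, then a weak-error limit. Your walk, however, is genuinely different, and the step you left as ``granting the correspondence'' does go through with your first candidate. For $I\in\mathcal{D}_{2k}$ and $\varepsilon_J=|J|^{1/2}h_J$, put $\xi^1=\varepsilon_{I_+}+\varepsilon_{I_-}$ and $\xi^2=\varepsilon_{I_-}-\varepsilon_{I_+}$ on $I$. Since $\Sha\varepsilon_{I_+}=\varepsilon_{I_-}$ and $\Sha\varepsilon_{I_-}=-\varepsilon_{I_+}$, one gets $\Sha\xi^1=\xi^2$ and $\Sha\xi^2=-\xi^1$. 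Hence $\Sha(a\xi^1+b\xi^2)=-b\xi^1+a\xi^2$ for all $a,b$ constant on $I$, and with $(a,b)=\sqrt{\delta}\,\nabla u(Z_k)$ this is exactly your $V$-increment. Your sign for $\xi^2$ gives $\Sha U=-V$, which is equally good. On the four grandchildren of $I$ the pair $(\xi^1,\xi^2)$ takes each value in $\{\pm 1\}^2$ exactly once. So $Z$ is an ordinary random walk with i.i.d.\ steps and no memory, contrary to your remark. Since $Z_k$ and every stopping indicator $\mathbf{1}(\kappa>k)$ of $Z$ are constant on $I$, the identity $\Sha U=V$ is exact, stopped or not.

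Commit to this candidate and drop the fallback. As written, $\xi^1=\varepsilon_{2k}$ is built from $h_I$, which $\Sha$ moves to the sibling of $I$, so it is not a rotation. In toss notation your first candidate reads $\xi^1=\varepsilon_{2k+1}$, $\xi^2=\pm\varepsilon_{2k}\varepsilon_{2k+1}$.

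The limit needs neither duality nor Jensen. Taylor's formula and $\Delta u=0$ give $u(Z_{k+1})-u(Z_k)-\nabla u(Z_k)\cdot\mathd Z_{k+1}=\delta\,\partial_{12}u(Z_k)\,\xi^1_k\xi^2_k+O(\delta^{3/2})$, a martingale difference plus a small remainder. Hence, for the stopped walk and $n\le T/\delta$, $\|U_n-u(Z_n)\|_p\lesssim\sqrt{T\delta}+T\sqrt{\delta}$, and likewise for $V$ and $v$. The invariance principle for the exit position, together with a uniform tail bound on the exit time, then gives $\mathbb{E}|u(Z_n)|_X^p\to\mathbb{E}|f(B_\tau)|_X^p$ (first $\delta\to 0$, then $T\to\infty$), and the same for $v$. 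Note that $U_N$ and $u(B_\tau)$ live on different probability spaces, so what converges are the $L^p$ norms, not the functions.

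The paper instead spends one generation per step, $\mathd B^1_l=\sqrt{2\delta}\,\varepsilon^+_l$, $\mathd B^2_l=\sqrt{2\delta}\,\varepsilon^-_l$, with the previous toss choosing the direction.

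\begin{itemize}
\item \emph{What the paper's walk buys.} The construction stays time faithful, with $\Sha\,\mathd B_l=\mathd B_l^{\top}$ at every generation. This supports the paper's reading of $\Sha$ as a rotation acting on the probability space.
\item \emph{What it costs.} The walk has memory and moves in only one coordinate per step. Hence the coarse sampling $X_n=B_{nN}$ with $\theta=N\delta$, stopping at coarse times, and the weak-consistency lemmas.
\item \emph{An approximation the paper leaves implicit.} There the multiplier $\nabla f(B_{l-1})$ is evaluated at points differing by $2\sqrt{2\delta}$ on the two children $J_\pm$ of $J\in\mathcal{D}_{l-1}$. So $\Sha M^f=M^g$ is exact for the paper's formal definition, in which $\Sha$ acts on $\mathd B_l$ alone. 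As an identity for the operator on $L^p_X[0,1)$ it holds only up to a martingale with $O(\delta)$ increments, an $O(\sqrt{T\delta})$ error that must be absorbed in the limit.
\end{itemize}

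Your sibling-pair walk discards every other generation, which costs nothing since $s_p$ is a supremum over all functions. In exchange it gives an exact operator identity, i.i.d.\ steps with covariance $\delta\,\mathrm{Id}$, and a convergence proof that reduces to the Taylor estimate above plus Donsker.
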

It is further our task to prove the lower bound, namely that {\tmname{Bourgain}}'s direction holds with a \emph{linear} relation of the norms:

\begin{theorem}\label{theorem_lower}
Let $G$ denote the Catalan constant and set $c_0=8 G / \pi^2$. Then for any UMD Banach space $X$, we have
\[
	\| \Sha \|_{L_X^p \rightarrow L_X^p} \leqslant c_0^{-1} \| \mathcal{H}
    \|_{L_X^p \rightarrow L_X^p} .
\]
\end{theorem}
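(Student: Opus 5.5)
The plan is to run a modified version of \tmname{Bourgain}'s high frequency transference argument in dualized form, exploiting that $\Sha$ acts on each generation like a quarter rotation. First I would rewrite $\Sha$ on the Walsh system. On a level $k$ interval $I$ with parent sign $\varepsilon_{k-1}$ we have $h_{I_\pm}\propto \tfrac12(1\pm\varepsilon_{k-1})\varepsilon_k$. A direct computation from \eqref{eq: definition of Sha} then gives
\[
\Sha(\varepsilon_k)=-\varepsilon_{k-1}\varepsilon_k,\qquad \Sha(\varepsilon_{k-1}\varepsilon_k)=\varepsilon_k ,
\]
with coefficients that are functions of $\varepsilon_0,\dots,\varepsilon_{k-2}$. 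Thus a martingale difference $d_kf=(A_k+\varepsilon_{k-1}B_k)\varepsilon_k$ is sent to $(B_k-\varepsilon_{k-1}A_k)\varepsilon_k$. This is the dyadic analogue of $\cos\mapsto\sin$, $\sin\mapsto-\cos$, that is, of the Cauchy--Riemann structure of $\mathcal{H}$. By density and duality ($X$ is UMD, hence reflexive) it suffices to bound $|\langle \Sha f,g\rangle|\le c_0^{-1}\|\mathcal{H}\|\,\|f\|_{L^p_X}\|g\|_{L^{p'}_{X^*}}$ for finite Walsh polynomials $f,g$ of depth $n$.

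Next I would transfer to the torus with a random generator that has memory. Fix rapidly increasing integers $N_1\ll N_2\ll\dots\ll N_n$ and random phases. On $\mathbb{T}$, replace $\varepsilon_k$ by $\sigma_k(x)=\tmop{sign}\cos(N_kx+\theta_k)$. Replace the product $\varepsilon_{k-1}\varepsilon_k$ not by $\sigma_{k-1}\sigma_k$ but by a companion square wave $\tilde\sigma_k(x)=\tmop{sign}\sin(N_kx+\theta_k)$, suitably coupled so that the pair $(\sigma_k,\tilde\sigma_k)$ jointly has the law of $(\varepsilon_k,\varepsilon_{k-1}\varepsilon_k)$. This coupling is where the memory enters, since the $k$-th generator must remember the $(k-1)$-st. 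Define $F=\sum_k (A_k+\tilde B_k)\ast$ by substituting these variables into $f$, and $G$ similarly from $g$, using the dual rotation. Because $N_k/N_{k-1}\to\infty$, the $n$-tuple of generators is asymptotically equidistributed like $(\varepsilon_0,\dots,\varepsilon_{n-1})$. Together with translation invariance in the phases, this gives $\|F\|_{L^p_X(\mathbb{T})}\to\|f\|_{L^p_X}$ and $\|G\|_{L^{p'}_{X^*}}\to\|g\|_{L^{p'}_{X^*}}$.

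The core step is the identity
\[
\langle \mathcal{H}F,G\rangle\longrightarrow c_0\,\langle \Sha f,g\rangle .
\]
Since the coefficients $A_k,B_k$ are functions of much lower frequency, $\mathcal{H}$ acts on each high-frequency block essentially only through the carrier: $\mathcal{H}(a\,\phi(N_k\cdot))\approx a\,\mathcal{H}\phi(N_k\cdot)$, with errors vanishing as $N_k/N_{k-1}\to\infty$. Cross terms between different levels $k\neq l$ average out, because the carriers are asymptotically independent with mean zero. What remains on each level are the correlations of $\mathcal{H}(\tmop{sign}\cos)$ against $\tmop{sign}\sin$ and of $\mathcal{H}(\tmop{sign}\cos)$ against $\tmop{sign}\cos$. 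The latter vanishes by parity. For the former, the Fourier series $\tmop{sign}\cos x=\frac4\pi\sum_j\frac{(-1)^j}{2j+1}\cos((2j+1)x)$ gives
\[
\frac1{2\pi}\int_{\mathbb{T}}\mathcal{H}(\tmop{sign}\cos)\,\tmop{sign}\sin
=\frac{8}{\pi^2}\sum_{j\ge0}\frac{1}{(2j+1)^2}\cdot(\text{signs})
=\frac{8G}{\pi^2}=c_0 .
\]
Here the alternating signs produce Catalan's constant rather than $\pi^2/8$. The sign pattern of these correlations reproduces exactly the rotation $(A,B)\mapsto(B,-A)$ computed above. Hence $c_0|\langle\Sha f,g\rangle|=\lim|\langle\mathcal{H}F,G\rangle|\le\|\mathcal{H}\|\,\|f\|\,\|g\|$, which is the claim.

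The main obstacle is the second step: building a generator whose pair $(\sigma_k,\tilde\sigma_k)$ truly has the joint law of $(\varepsilon_k,\varepsilon_{k-1}\varepsilon_k)$ given the past. In this law $\tilde\sigma_k$ must equal $\sigma_{k-1}\sigma_k$ in distribution, yet it must also be, as a function, the conjugate-type partner of $\sigma_k$ under $\mathcal{H}$. My first attempt would be to modulate the phase $\theta_k$ by $\frac{\pi}{2}(1-\sigma_{k-1})/2$, so that $\sigma_{k-1}$ decides whether the level $k$ carrier is a cosine or a sine wave. I would then check that this phase dependence, being of lower frequency, commutes with $\mathcal{H}$ up to vanishing errors, and that it keeps the norm-preservation argument intact.
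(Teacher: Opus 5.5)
\textbf{There is a genuine gap, at exactly the step you flag, and a cleverer coupling cannot close it.} Several parts of your proposal are correct: the rotation formula $(A_k+\varepsilon_{k-1}B_k)\varepsilon_k\mapsto(B_k-\varepsilon_{k-1}A_k)\varepsilon_k$ for $\Sha$, and the Fourier evaluation $\frac{1}{2\pi}\int_{\mathbb{T}}\mathcal{H}(\tmop{sign}\cos)\,\tmop{sign}\sin=8G/\pi^2$. Your architecture is also the paper's: Bourgain's modulation in dualized form, with level $k$ carried by $\tmop{sign}\cos$ and $\tmop{sign}\sin$ of one fresh phase. The problem is a dichotomy.

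\emph{Norm preservation forces the coupling.} The triple $(\varepsilon_{k-1},\varepsilon_k,\varepsilon_{k-1}\varepsilon_k)$ charges only four of the eight sign patterns. If the carriers charge the others, the law of $F$ changes, so norm preservation for all $f$ forces $\tilde\sigma_k=\sigma_{k-1}\sigma_k$ in the limit. For example, with $f=\varepsilon_0(1+\varepsilon_1)(1+\varepsilon_2)$ and independent carriers one gets $\|F\|_4^4=40$, while $\|f\|_4^4=64$.

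\emph{Your phase shift does not produce the coupling.} Shifting the phase by $\frac{\pi}{2}\cdot\frac{1-\sigma_{k-1}}{2}$ is only a measure-preserving reparametrization of the fast phase. So $(\sigma_k,\tilde\sigma_k)$ stays asymptotically uniform on $\{\pm1\}^2$ and independent of $\sigma_{k-1}$.

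\emph{The coupling kills the pairing.} If you impose $\tilde\sigma_k=\sigma_{k-1}\sigma_k$, then $\tilde\sigma_k$ is the carrier $\sigma_k$ times a low-frequency sign, and $\mathcal{H}$ passes through that sign. The level-$k$ term of $\langle\mathcal{H}F,G\rangle$ then factors asymptotically as a low-frequency pairing times $\mathbb{E}[\sigma_k\,\mathcal{H}\sigma_k]=0$, whatever coefficients $G$ carries. The same holds for any law-preserving carrier $\Phi_s(N_kx+\theta_k)$ depending on the past $s$, since $\int_{\mathbb{T}}\Phi_s\,\mathcal{H}\Phi_s=0$. Hence $\langle\mathcal{H}F,G\rangle\to0$, whereas $\langle\Sha h_{J_+},h_{J_-}\rangle=1$.

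\emph{Why this is structural.} We have $\langle\Sha f,g\rangle=\sum_J\frac{|J|}{2}\big(\langle \mathd_{J_+}f,\mathd_{J_-}g\rangle-\langle \mathd_{J_-}f,\mathd_{J_+}g\rangle\big)$, which pairs $f$ on $J_+$ with $g$ on $J_-$, two disjoint events of the path. In a Bourgain-type transference, $\mathcal{H}$ acts only on the newest carrier and pairs $F$ with $G$ pointwise in the older variables. By law preservation, those older variables have already decided between $J_+$ and $J_-$. What is missing is a different mechanism, not a refined generator.

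\textbf{The paper's proof does not supply the missing step either.} It takes the same route. It handles the memory by substituting $\varphi^\pm(\theta_k)$ for the left/right tosses $\varepsilon_k^\pm$ and keeping the selection in the coefficients $\mathd F_k^\pm$; this is your phase selection in other clothing. It then uses $\mathcal{P}\mathcal{H}\varphi^\sigma=c_0\Sha\varphi^\sigma$ and the modulation lemma.

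Suppose $\mathd F_k^\pm$ carry the indicators needed for $F$ to have the law of $f$, as the paper's definition of $\mathd f_k^\pm$ prescribes. Then $\mathd F_k^\sigma$ and $\mathd G_k^{\bar\sigma}$ have disjoint supports, so both $\mathbb{E}^{\vec\theta}\langle\Sha F,G\rangle$ and $\mathbb{E}^{\vec\theta}\langle F^{\mathcal{H}},G\rangle$ vanish identically.

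Take already $f=h_{(I_0)_+}$ and $g=h_{(I_0)_-}$. One gets $F=\sqrt2\,\mathbf{1}_{\{\varphi^+(\theta_0)=1\}}\varphi^+(\theta_1)$ and $G=\sqrt2\,\mathbf{1}_{\{\varphi^+(\theta_0)=-1\}}\varphi^-(\theta_1)$, which have the correct laws. Yet $\mathbb{E}^{\vec\theta}\langle\Sha F,G\rangle=0\neq1=\mathbb{E}^x\langle\Sha f,g\rangle$, so the identity the paper uses to pass from $x$ to $\vec\theta$ fails. Dropping the indicators gives a nonzero pairing but breaks $\|F\|_{L^p_X}=\|f\|_{L^p_X}$.

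So you cannot finish your argument by borrowing the paper's construction. The obstruction above needs a genuinely new idea.
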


Here $c_0 = 8 G / \pi^2 \sim 0.742454$, and $c_0^{-1} \sim 1.34689$.

\

We remark that we can state the norm relations above using the Hilbert transform on $\mathbb{R}$ or $\mathbb{T}$ as the norms are identical via the use of the transference method, detailed in \cite{GeiMonSak2010}. We prove in this text that it is indifferent whether we estimate $\mathcal{S}_0$ defined on $\mathbb{R}$ or on $I_0$.

\

Recall that in the Haar basis notation the definition of the UMD--$p$ constant is
$ m_p = \sup_{\alpha}\| \mathcal{T}_{\alpha} \|_{L_X^p \mapsto L_X^p}$
where $\mathcal{T}_{\alpha}$ denotes the $\pm 1$ martingale multiplier.
In this language the UMD conjecture reads as follows:

\begin{conjecture}[UMD Conjecture] There exist constants $c,C>0$ such that for any Banach space $X$, there holds
\[
	c \sup_{\alpha}\| \mathcal{T}_{\alpha} \|_{L_X^p \mapsto L_X^p} \leqslant 
	 \| \mathcal{H}\|_{L_X^p \rightarrow L_X^p} \leqslant 
	C \sup_{\alpha}\| \mathcal{T}_{\alpha} \|_{L_X^p \mapsto L_X^p}.
\]
\end{conjecture}

Our results thus prove the desired estimates for the pair $\Sha$ and $\mathcal{H}$ instead of the pair $\mathcal{T}_{\alpha}$ and $\mathcal{H}$. Also importantly, it reduces the UMD conjecture to the pair $\mathcal{T}_{\alpha}$ and $\Sha$, both simple dyadic operators.

\section{Tuning of the dyadic filtration}\label{S: Tuning}

For our problem, it is crucial to adapt the dyadic representation to the action of $\Sha$.
First rewrite the Haar series by emphasizing the pairs of sibling intervals $I_\pm$ as
\begin{equation}
	f = \langle f \rangle_{I_0} + (f, h_{I_0}) h_{I_0} + \sum_{k = 0}^{\infty}
   		\sum_{I : |I| = 2^{- k} |I_0 |} (f, h_{I_-}) h_{I_-} + (f, h_{I_+}) h_{I_+}.
\label{eq: Haar decomposition bis}
\end{equation}
This suggests introducing left and right tosses,
respectively $(\varepsilon_k^-)_{k\geq 1}$ and $(\varepsilon_k^+)_{k\geq 1}$, defined as
\[
	\varepsilon^\pm_k(x) = \sum_{I \subset \mathcal{D}^\pm_k} \varepsilon_I(x) = \sum_{I \subset \mathcal{D}^\pm_k} |I|^{1/2} h_I (x).
\]
It follows that for all $k\geq 1$, all $x\in I_0$, we have $\varepsilon_k(x) = \varepsilon_k^-(x) + \varepsilon_k^+(x)$,
with the left and right tosses having disjoint supports forming a partition of $I_0$.
For $k=0$, we have no left or right toss and therefore we use the previously defined toss $\varepsilon_0(x)$.
The Haar expansion \eqref{eq: Haar decomposition bis} can now be written as
\begin{eqnarray*}
	f	& =	& \langle f \rangle_{I_0} + \mathd_0 f\ \varepsilon_0
		+\ \sum_{k = 1}^{\infty}
		\mathd_k f(\varepsilon_0,\varepsilon^-_{1}+\varepsilon^+_{1},\ldots,\varepsilon^-_{k-1}+\varepsilon^+_{k-1})\ (\varepsilon^-_k + \varepsilon^+_k).
\end{eqnarray*}
$\varepsilon^-_k \neq 0$ iff $\varepsilon^-_{k-1}+\varepsilon^+_{k-1}=-1$ and 
 $\varepsilon^+_k \neq 0$ iff $\varepsilon^-_{k-1}+\varepsilon^+_{k-1}=+1$. To remind of this we introduce the notation 
 \begin{eqnarray*}
 \lefteqn{\mathd_k f^{\pm}(\varepsilon_0,\varepsilon^-_{1},\varepsilon^+_{1},\ldots,\varepsilon^-_{k-1},\varepsilon^+_{k-1})}\\
 &=&
 \left\{\begin{array}{ll}
 \mathd_k f(\varepsilon_0,\varepsilon^-_{1}+\varepsilon^+_{1},\ldots,\varepsilon^-_{k-1}+\varepsilon^+_{k-1}) & \text{if } \varepsilon^-_{k-1}+\varepsilon^+_{k-1}=\pm 1\\
 0 &\text{if } \varepsilon^-_{k-1}+\varepsilon^+_{k-1}=\mp 1 \\
  \end{array}\right. .
 \end{eqnarray*} 

The action of $\Sha$ reads
\begin{align*}\MoveEqLeft
	\Sha f  =  0\ +\ 0
		+ \ \sum_{k = 1}^{\infty}
		\ \mathd_k f^+(\varepsilon_0,\varepsilon^-_{1},\varepsilon^+_{1},\ldots,\varepsilon^-_{k-1},\varepsilon^+_{k-1})\ \varepsilon^-_k
		\\
		 & \hspace{2.5cm} - \mathd_k f^-(\varepsilon_0,\varepsilon^-_{1},\varepsilon^+_{1},\ldots,\varepsilon^-_{k-1},\varepsilon^+_{k-1})\ \varepsilon^+_k.
\end{align*}

The change of variable above is straightforward since for all $k\geq 1$, all $x\in I_0$, we have $\varepsilon_k(x) = \varepsilon_k^-(x) + \varepsilon_k^+(x)$.
It is further noticeable that for any given $x\in I_0$, only one term is present in each summand.
To repeat, if $\varepsilon^-_{k-1}= -1$ or
$\varepsilon^+_{k-1}= -1$ (i.e. $\varepsilon_{k-1}= -1$), then $x\in\mathcal{D}^-_k$, hence $\varepsilon_k^-(x)\neq 0$
and $\varepsilon_k^+(x)=0$. Similarly if $\varepsilon^-_{k-1}= +1$ or
$\varepsilon^+_{k-1}= +1$ (i.e. $\varepsilon_{k-1}= +1$), then $x\in\mathcal{D}^+_k$, hence $\varepsilon_k^-(x)= 0$
and $\varepsilon_k^+(x)\neq 0$. See Figure \ref{fig: dyadic tosses} below for an illustration.
\begin{figure}[ht]
\centering
\begin{subfigure}{.5\textwidth}
  \centering
  \includegraphics[width=0.9\linewidth]{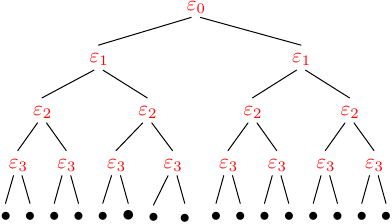}
\end{subfigure}%
\begin{subfigure}{.5\textwidth}
  \centering
  \includegraphics[width=0.9\linewidth]{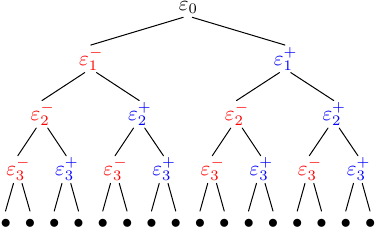}
\end{subfigure}
\caption{Standard dyadic tosses (left) versus dyadic tosses adapted to $\Sha$ (right)}
\label{fig: dyadic tosses}
\end{figure}

\section{First $L^p$ estimates} \label{S: first Lp estimates}

We have considered so far $\Sha$ acting on either functions defined on $\mathbb{R}$ or functions defined on a $I_0$. In this section, we note specifically $\Sha^{\mathbb{R}}: h_{I_{\pm}}\to\pm h_{I_{\mp}}, \forall I\in\mathcal{D}$. Further, let us denote $\Sha^J$ the operator acting on functions defined on $J$, defined by
$\Sha^J: h_{I_{\pm}}\to\pm h_{I_{\mp}},\ \forall I\subseteq J$, $\Sha^J(\mathbf{1}_J)=0$, $\Sha^J(h_J)=0$.

We claim that all those operators have the same $L^p$ norm

\begin{theorem}\label{T: comparison Lp norms}
Let $J\in\mathcal{D}$ any dyadic interval. We have
\[
\big\| \Sha^J \big\|_{L^p_X(J) \rightarrow L^p_X(J)}
=
\big\| \Sha^{\mathbb{R}} \big\|_{L^p_X(\mathbb{R}) \rightarrow L^p_X(\mathbb{R})}
\]
\end{theorem}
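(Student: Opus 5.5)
Two inequalities need to be established, and a preliminary observation reduces the first of them to a single interval. Translation by $m|J|$ ($m\in\mathbb{Z}$) and dilation by powers of $2$ map dyadic intervals to dyadic intervals. They also preserve the left/right child structure and send $h_I$ to the correspondingly normalized Haar function. Hence $\Sha^J$ and $\Sha^{J'}$ are conjugate by an isometry of $L^p_X$, up to the common $L^p$ normalization factor, so all $\|\Sha^J\|$ coincide. It therefore suffices to compare $\Sha^{I_0}$ with $\Sha^{\mathbb{R}}$.

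\emph{Step 1: $\|\Sha^J\|\le\|\Sha^{\mathbb{R}}\|$.} Take $f\in L^p_X(J)$, a finite Haar sum, and extend it by zero to $\mathbb{R}$. Its Haar expansion on $\mathbb{R}$ involves $h_I$ with $I\subseteq J$ and also the Haar functions of the ancestors $K\supsetneq J$, which carry the mean $\langle f\rangle_J$. Subtract this mean first. Let $g=f-\langle f\rangle_J\mathbf{1}_J-(f,h_J)h_J$; this does not change $\Sha^J f$, and $\|g\|_{L^p_X}\le C\|f\|$ is too crude. So instead I use an averaging trick. Both $\mathbf{1}_J$ and $h_J$ lie in the span of $h_K$, $K\supsetneq J$, in $L^2(\mathbb{R})$, since $\mathbf{1}_J=\sum_{K\supsetneq J}|J|^{?}\dots$, a series converging in $L^p$ for $p>1$. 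Since $\Sha^{\mathbb{R}}$ maps $h_{I}$ for $I\subsetneq J$ to functions supported in $J$, I would write
\[
\Sha^{\mathbb{R}} f=\Sha^J f+\Sha^{\mathbb{R}}\big((f,h_J)h_J+\langle f\rangle_J\mathbf{1}_J\big).
\]
The second term has coefficients only at generation of $J$ and above, so it is supported on $J$, its sibling, and ancestors' siblings. Restricting to $J$ gives $\mathbf{1}_J\Sha^{\mathbb{R}}f=\Sha^J f+c$ with $c$ constant on $J$. The issue is to kill $c$. Alternative cleaner route: take $f$ with mean zero and $(f,h_J)=0$ and replace $f$ by $F=\sum_{j=1}^N f(\cdot - j|J|)$. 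By linearity and translation covariance under shifts by multiples of $|J|$, $\mathbf{1}_{J_j}\Sha^{\mathbb{R}}F$ equals $\Sha^Jf$ translated, and $\|F\|^p=N\|f\|^p$. This gives $N\|\Sha^Jf\|^p\le\|\Sha^{\mathbb{R}}\|^pN\|f\|^p$. Here $(f,h_J)=0$ is essential, since $\Sha^J$ kills $h_J$ while $\Sha^{\mathbb{R}}$ does not. For a general $f$, write $f=\langle f\rangle_J\mathbf{1}_J+(f,h_J)h_J+g$. Then $\Sha^Jf=\Sha^Jg$, and it remains to bound $\|g\|\le\|f\|$. I would instead avoid this by using the dilation identity: $\Sha^J$ restricted to $J_-$-supported functions acts like $\Sha^{J_-}$. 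So I take $f\in L^p_X(J)$, place $f$ inside $J'$, a child of a larger interval, and use $\Sha^{\mathbb{R}}$ on $f-\langle f\rangle_J\mathbf{1}_J$ projected. This projection norm issue is the main obstacle. I would try first to use a UMD-free argument: the conditional expectation onto the span of $h_J,\mathbf 1_J$ is contractive, giving $\|g\|\le 2\|f\|$ only. So I would rather show $\|\Sha^J\|$ is attained on $g$'s with these coefficients zero, by a concatenation argument. The argument places $f$ on $J_-$ and a compensating copy on $J_+$ via $f(x)$ and $-f(x-|J|/2)$ (or a suitable reflection) so that the mean and $h_J$-coefficient of the concatenation vanish while $\Sha$ acts copywise.

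\emph{Step 2: $\|\Sha^{\mathbb{R}}\|\le\|\Sha^{J}\|$.} Take a finite Haar sum $f$ on $\mathbb{R}$ with $\int f=0$, supported on coefficients $h_I$, $I\subseteq[-2^M,2^M)$. Choose $J=[0,2^{M+2})$ after translating. The function is then supported in $J_-$, say, but there are ancestor terms $h_K$, $K\supseteq J$. Since the coefficients are finitely many and $\Sha^{\mathbb{R}}h_K$ for large $K$ has $L^p$ norm $|K|^{1/p-1/2}$, and these coefficients $(f,h_K)\to0$ suitably, I would approximate $f$ in $L^p$ by its truncation $f_J=f\mathbf{1}_J-\langle f\rangle_J\mathbf 1_J-(f,h_J)h_J$. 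The error is $O(\|f\|_1|J|^{1/p-1})\to0$ as $|J|\to\infty$, the same for images. So $\|\Sha^{\mathbb{R}}f\|\le\|\Sha^J\|\|f\|+o(1)$, and density concludes.
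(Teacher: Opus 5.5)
\textbf{Verdict: Step 2 is fine, but Step 1 has a genuine gap.}

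Step 2 is essentially the paper's argument for the converse inequality, and your tail bound $O(\|f\|_{L^1_X}|J|^{1/p-1})$ has the right exponent. The gap in Step 1 comes from one false claim. You assert that $\Sha^{\mathbb{R}}\big((f,h_J)h_J+\langle f\rangle_J\mathbf{1}_J\big)$ may be supported on $J$, leaving a constant $c$ there.

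In fact, for every $K\supseteq J$ one has $\Sha^{\mathbb{R}}h_K=\pm h_{K'}$, where $K'$ is the sibling of $K$. The interval $K'$ is disjoint from $K$, hence from $J$. Since $\mathbf{1}_J=\sum_{K\supsetneq J}(\mathbf{1}_J,h_K)h_K$ in $L^p$ for $p>1$, the entire ancestor part of the zero extension $\tilde f$, including the $h_J$ term, is mapped off $J$. So $c=0$, that is, $\mathbf{1}_J\,\Sha^{\mathbb{R}}\tilde f=\Sha^J f$ exactly, and
\[
\|\Sha^J f\|_{L^p_X(J)}\le\|\Sha^{\mathbb{R}}\tilde f\|_{L^p_X(\mathbb{R})}\le\|\Sha^{\mathbb{R}}\|\,\|f\|_{L^p_X(J)} .
\]
This is the paper's entire proof of this direction. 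Two smaller points: $h_J$ does not lie in the span of the $h_K$ with $K\supsetneq J$. Also, your periodization is superfluous, because for $g$ with $\langle g\rangle_J=0=(g,h_J)$ one already has $\Sha^{\mathbb{R}}g=\Sha^J g$.

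Because you missed this support observation, you route through $g=f-\langle f\rangle_J\mathbf{1}_J-(f,h_J)h_J$. Neither of your ways back to a general $f$ works.
\begin{itemize}
\item The inequality $\|g\|_{L^p}\le\|f\|_{L^p}$ is false for $p\neq 2$. Take $f=\mathbf{1}_A$ with $A\subset J_-$ and $|A|=a|J_-|$. Then $\|g\|_p^p/\|f\|_p^p=(1-a)^p+(1-a)a^{p-1}$, which exceeds $1$ for small $a$ when $p<2$. By duality ($I-P$ is self-adjoint) the same holds for $p>2$.
\item The concatenation fails. Putting $f$ on $J_-$ and $-f(\cdot-|J|/2)$ on $J_+$ kills the mean, but it doubles the $h_J$-coefficient to $-2|J|^{-1/2}\int_{J_-}f$ instead of removing it.
\item Even apart from that, $\Sha^J$ does not act copywise. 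It exchanges $h_{J_-}$ and $h_{J_+}$, so on $J_-$ you get $\Sha^{J_-}f-(f,h_{J_-})h_{J_-}$ rather than a copy of $\Sha^J f$.
\end{itemize}
So Step 1 is not established as written, and the one-line support argument above is the missing idea.

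A minor point in Step 2: $[-2^M,2^M)$ lies in no dyadic interval, and translating by $2^M$ breaks sibling pairs of size $2^M$. It is cleaner to split $f$ into its parts on $\mathbb{R}^{\pm}$, which $\Sha^{\mathbb{R}}$ never mixes. Also, for a finite Haar sum whose intervals all lie strictly inside $J$ there are no ancestor terms, so $\Sha^{\mathbb{R}}f=\Sha^J f$ holds with no approximation.
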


\begin{proof}
Without loss of generality, we consider the case $J=I_0$. Let $f$ defined on $I_0$ and $\tilde f$ its extension to $\mathbb{R}$ with $\tilde f = 0$ on $\mathbb{R}\backslash I_0$.
The Haar decomposition of $\tilde f$ on $\mathbb{R}$ involves only dyadic intervals
$I\supset I_0$ or dyadic intervals $I\subsetneq I_0$. In the case $I\subsetneq I_0$, we have for the sibling $I^\prime$ of $I$ that also $I^\prime \subsetneq I_0$, hence
$\Sha^{\mathbb{R}} h_I = \Sha^{I_0} h_I$. On the converse when $I\supset I_0$ we have 
$I^\prime \cap I_0 = \emptyset$. It follows that $\Sha^{I_0} f$ is the restriction to $I_0$
of $\Sha^{\mathbb{R}} \tilde f$. This implies
\[
\big\| \Sha^{I_0} f \big\|_{L^p_X(I_0)}
	\leqslant
	\big\| \Sha^{\mathbb{R}} \tilde f \big\|_{L^p_X(\mathbb{R})}
	\leqslant
	\big\| \Sha^{\mathbb{R}} \big\|_{L^p_X(\mathbb{R}) \rightarrow L^p_X(\mathbb{R})}
			\| \tilde f \|_{L^p_X(\mathbb{R})}.
\]
With $\| \tilde f \|_{L^p_X(\mathbb{R})} = \| f \|_{L^p_X(I_0)}$ this proves the claim for $I_0$.

\

On the converse, let now $f$ defined on $\mathbb{R}$. We assume without loss of generality that $f$ is bounded with compact support on $\mathbb{R}^+$. Let $J\supset I_0$ a large dyadic interval containing the support of $f$. We split
$\Sha^{\mathbb{R}} = \Sha^{J} + (\Sha^{\mathbb{R}}-\Sha^{J})$ and observe that the difference only involves Haar functions corresponding to the increasing sequence of dyadic intervals $K\supset J$. Their transforms $\Sha^{\mathbb{R}} h_K$ involve dyadic intervals forming a disjoint family. Each corresponding term in the dyadic decomposition of $f$ or its transform is of order at most $ \| f \|_{L^1_X(J)} / |K|  $, yielding
\[
\big\| \Sha^{\mathbb{R}} f \big\|_{L^p_X({\mathbb{R}\backslash J)}}
\lesssim
\Big( \sum_{K\supset J}\big( |K|^{-1}\|f \|_{L^1_X(J)}  \big)^p \Big)^{1/p}
\lesssim
|J|^{-1}\|f \|_{L^1_X(J)}.
\]
It is further not difficult to prove using simple scaling arguments that for any dyadic interval $J$, we have 
$\| \Sha^{J} \|_{L^p_X(J)} = \| \Sha^{I_0} \|_{L^p_X(I_0)}$,
therefore also
\[
\big\| \Sha^{\mathbb{R}} f \big\|_{L^p_X(\mathbb{R})}
\leqslant
\big\| \Sha^{I_0} \big\|_{L^p_X(I_0)} \| f \|_{L^p_X(\mathbb{R})} + C |J|^{-1}\|f \|_{L^1_X(\mathbb{R})}
\]
for some constant $C>0$. Passing to the limit $|J|\to\infty$ implies the claim.
\end{proof}

As a conclusion, we can use $s_p$ to denote the norm of either $\Sha^\mathbb{R}$ or $\Sha^{I_0}$.

\section{Proof of the Upper Bound (Theorem \ref{theorem_upper})}


\paragraph{Stochastic representation of the Hilbert transform}
Let $f \in L^p(\mathbb{T})$ given and $g = \mathcal{H} f$ the Hilbert transform of $f$. We use the same notation for their harmonic extensions $f,g \in L^p
(\mathbb{D})$. Let $W$ the standard two--dimensional Brownian motion started
at the origin. For all times $t$ such that
$(W_s)_{0 \leqslant s \leqslant t}$ remains in the unit disc, the It{\^o} formula ensures that a.s.,
\[ f (W_t) = f (0, 0) + \int_0^t \nabla f (W_s) \cdot \mathd W_s, 
\]
since $f$ is harmonic.
Let $\tau$ denote the hitting time of $\partial \mathbb{D}$, 
$  \tau \assign \inf \{ t > 0 ; W_t \nin \mathbb{D} \} . $ 
We have again thanks to It{\^o}'s
formula, if $W_{\tau} = z$, i.e. the random walk hits $\partial
\mathbb{D}$ at $z$,
 $f(z)=f (W_{\tau}) = f (0, 0) + \int_0^{\tau} \nabla f (W_s) \cdot \mathd W_s .$ 
Similarly, owing to the Cauchy--Riemann relations for the conjugate function
$g$ of $f$, 
\[ \forall t \leqslant \tau, \quad g (W_t) = \int_0^t \nabla g (W_s) \cdot
   \mathd W_s = \int_0^t \nabla f (W_s)^{\perp} \cdot \mathd W_s, \]
where
\[ \nabla^{\perp} f (W_s) = \left(\begin{array}{cc}
     0 & - 1\\
     1 & 0
   \end{array}\right) \nabla f (W_s) \]
denotes the vector anticlockwise orthogonal to $\nabla f (W_s)$. In other words,
$g (W_t)$ is a martingale transform of $f (W_t)$ with predictable multiplier
the rotation matrix above. 
In the next sections, we will be dealing with the martingale $\mathcal{M}^f_t
\assign f (W_t)$ and its martingale transform $\mathcal{M}_t^g \assign g
(W_t)$ defined as
\[ 
  \forall t \leqslant \tau, \quad \mathcal{M}_t^f \assign f (0, 0) + \int_0^t
  \nabla f (W_s) \cdot \mathd W_s, \quad \mathcal{M}_t^g \assign \int_0^t
  \nabla^{\perp} f (W_s) \cdot \mathd W_s . 
  \]
Notice that in the case where $f$ and therefore $g$ are real or Hilbert space valued, the situation is very different from the Banach space valued case. The martingales
above are on the one hand differentially subordinate using the norm, i.e.
\[ \mathd [\mathcal{M}^f, \mathcal{M}^f]_t = \| \nabla f (W_s) \|^2 \mathd t =
   \| \nabla f (W_s)^{\perp} \|^2 \mathd t = \mathd [M^g, M^g]_t, \]
and on the other hand are orthogonal, i.e
\[ \mathd [\mathcal{M}^f, \mathcal{M}^g]_t = (\nabla f (W_s), \nabla^{\perp} f
   (W_s)) \mathd t = 0. \]
They were studied in the work of {\tmname{Ba{\~n}uelos}} and {\tmname{Wang}}
{\cite{BanWan1996}}, where they prove the sharp
martingale $L^p$ bound $\| \mathcal{M}^g \|_p \leqslant c_p \| \mathcal{M}^f
\|_p$ using special functions, and where $c_p \assign \|
\mathcal{H}_{\mathbb{R}} \|_{p \rightarrow p}$.

\paragraph{Strategy and motivation}Due to the discrete nature of the dyadic
shift, we first aim at approximating the two stochastic integrals $\mathcal{M}_t^f$ and $\mathcal{M}_t^g$ by using two--dimensional discrete random walks $(B_k)_{k
\geqslant 0}$ built on top of the dyadic system. We expect discrete
martingales $M_k^f$ and $M_k^g$ that should be discrete counterparts of $\mathcal{M}_t^f$ and $\mathcal{M}_t^g$, namely
\[ 
  M_k^f (x) \assign f (0, 0) + \sum_{l = 1}^k \nabla f (B_{l - 1} (x)) \cdot
  \mathd B_l (x), 
\]
\[ 
  M_k^g (x) \assign \sum_{l = 1}^k \nabla^{\perp} f
  (B_{l - 1} (x)) \cdot \mathd B_l (x), 
 \] 
where $x$ spans the dyadic probability space $\Omega \assign [0, 1)$ with
uniform probability density $\mathd \mathbb{P} (x) = \mathd x$. In order to
relate the dyadic operator $\Sha$ to the Hilbert transform  $\mathcal{H}$, the
discrete random walk $(B_k)_{k \geqslant 0}$ has to be crafted in a very
specific manner, as we discuss now. We want to build this discrete random walk
so as to obtain the following diagram, where ``convergence'' is meant in a
weak sense defined later.
\[ \begin{array}{lllll}
     & f & \overset{\mathcal{H}}{\longrightarrow} & g & \\
     (\tmop{discretization} / \tmop{convergence}) & \longdownarrow
     \longuparrow &  & \longdownarrow \longuparrow & (\tmop{discretization} /
     \tmop{convergence})\\
     & M_k^f & \overset{\Sha}{\longrightarrow} & M_k^g & 
   \end{array} . \]
With the very specific process $B$ to be chosen, we want $M^g$
to be a martingale transform of $M^f$ through the action of $\Sha$,
namely $M^g = \Sha M^f$, where
\begin{equation}
  (\Sha M^f)_k \assign \sum_{l = 1}^k \nabla f (B_{l - 1} (x)) \cdot
  \Sha \mathd B_l (x) . \label{eq:SMf}
\end{equation}
Notice that $\Sha$ above acts on the increment $\mathd B_l(x)$ and not on the
predictable multiplier $\nabla f (B_{l - 1} (x))$, as one usually expects for martingale transforms.
But rewriting $M^g$ as
\begin{equation}
  M_k^g (x) = \sum_{l = 1}^k \nabla^{\perp} f (B_{l - 1} (x)) \cdot \mathd B_l
  (x) = \sum_{l = 1}^k \nabla f (B_{l - 1} (x)) \cdot \mathd B_l^{\top} (x),
  \label{eq:Mg bis}
\end{equation}
where $\mathd B^{\top}_k$denotes the vector clockwise orthogonal to
$\mathd B_k$,
and comparing \eqref{eq:SMf} with \eqref{eq:Mg bis}, suggests
that the discrete random walk should obey the property
\begin{equation}
  \Sha \mathd B_l {= \mathd B^{\top}_l}  . \label{eq:SdB}
\end{equation}
We construct explicitly such a dyadic random walk in Section \ref{SS: random walks} below.
After that, it will essentially remain to prove the convergence stated in the previous diagram.

We refer to Remark \ref{R: probabilistic interpretation Sha} for an interpretation of $\Sha$ acting on the probability space.

\

\subsection{ Random walks, stopping times and scalings}\label{SS: random walks}

\paragraph{Construction of the discrete random walk}
Let $\delta > 0$. We build a two dimensional random walk $B_k (x) = (B_k^1
(x), B_k^2 (x))$ defined as
\[ 
B^1_k (x) \assign \displaystyle \sum_{l = 1}^k \sqrt{2 \delta}\ \varepsilon_l^+ (x), \qquad
B^2_k (x) \assign \displaystyle \sum_{l = 1}^k \sqrt{2 \delta}\ \varepsilon_l^- (x),
\]
with $B_0 (x) = (0, 0)$ for all $x$. We write 
$\mathd B_l^1 (x)\assign \sqrt{2 \delta}\ \varepsilon_l^+ (x)=\sqrt{2 \delta}\  \tmmathbf{1} (\varepsilon_{l - 1} (x) =
     + 1)\ \varepsilon_l (x)$ 
and 
$\mathd B_l^2 (x) \assign \sqrt{2 \delta}\ \varepsilon_l^- (x)= \sqrt{2 \delta}\ \tmmathbf{1} (\varepsilon_{l - 1} (x) =
     - 1)\ \varepsilon_l (x).$ 
This formulation shows that the random walk $B_k$ is not a Markov process in the dyadic filtration but a
so--called Markov process with memory. See Figure \ref{Fig: DRW} below for an illustration.
\begin{figure}[ht]
\centering
\begin{subfigure}{.5\textwidth}
\centering
\includegraphics[width=0.9\linewidth]
{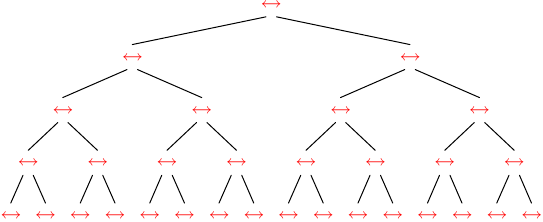}
\end{subfigure}%
\begin{subfigure}{.5\textwidth}
\centering
\includegraphics[width=0.75\linewidth]{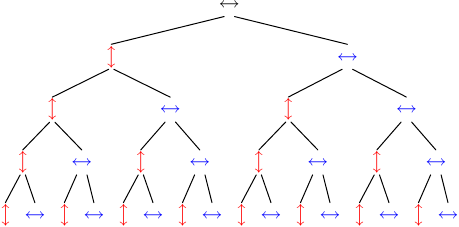}
\end{subfigure}
\caption{Standard one--dimensional discrete random walk on the left. Each toss produces a left or right step. Two--dimensional discrete random walk with memory. Each toss outcome decides the direction, horizontal or vertical, of the next step of the random walk.}
\label{Fig: DRW}
\end{figure}

Since
$$\mathd B^1_l (x)= 
  \sqrt{2 \delta}\ \varepsilon_l^+ (x)
  , \quad \mathd B^2_l (x) = 
  \sqrt{2 \delta}\ \varepsilon_l^- (x),
  $$
 we evaluate now, for any $l\geqslant 1$,
\begin{align*}
  \Sha \mathd B^1_l (x) = & \sqrt{2 \delta} \sum_{I \in
  \mathcal{D}_l^+} |I|^{1/2}  \Sha h_I (x) = \sqrt{2 \delta} \sum_{I
  \in \mathcal{D}_l^-} |I|^{1/2}  (+ h_I (x)) =  + \mathd B^2_l (x)
  \\
  \Sha \mathd B^2_l (x) = & \sqrt{2 \delta} \sum_{I \in
  \mathcal{D}_l^-} |I|^{1/2}  \Sha h_I (x) = \sqrt{2 \delta} \sum_{I
  \in \mathcal{D}_l^+} |I|^{1/2}  (- h_I (x)) = - \mathd B^1_l (x) .
\end{align*}
In other words, denoting $\mathd B_l \assign (\mathd B^1_l, \mathd B^2_l)$ a
two dimensional increment, we have $\Sha \mathd B_l = \mathd B_l^{\top}$
as desired.

We recall now that the discrete martingales associated to $f$ and $g =
\mathcal{H} f$ are
\[ \forall k, \quad M^f_k \assign f (B_0) + \sum_{l = 1}^k \nabla f (B_{l -
   1}) \cdot \mathd B_l, \quad M^g_k \assign g (B_0) + \sum_{l = 1}^k \nabla g
   (B_{l - 1}) \cdot \mathd B_l,
\]
and observe:
\begin{lemma}
  \label{L: Lp estimate for Mng}Let $M^f$ and $M^g$ as above. We have
  $\Sha M^f = M^g$ and therefore also
  \[ \forall k, \quad \big\| M_k^g \big\|_p \leqslant \big\| \Sha \big\|_{p \rightarrow
     p} \big\| M_k^f \big\|_p . \]
\end{lemma}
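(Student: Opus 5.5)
The plan is to verify the identity $\Sha M^f_k = M^g_k$ term by term in the Haar expansion, and then read off the norm inequality. First, $M^f_k$ is a function on $I_0$ measurable with respect to $\mathcal{F}_{k+1}$ (it depends on $\varepsilon_0,\dots,\varepsilon_k$). Its constant term is $f(B_0)=f(0,0)$, and $\Sha$ annihilates constants. The remaining terms are the summands $\nabla f(B_{l-1})\cdot \mathd B_l$.

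The key point is that each summand is a combination of Haar functions of generation $l$, namely those of $\mathcal{D}_l^+$ and $\mathcal{D}_l^-$, with coefficients taken from the coarser level. Indeed, $B_{l-1}$ is constant on intervals of $\mathcal{D}_{l}$, that is, on each child $J\in\mathcal{D}_l$. So
\[
\nabla f(B_{l-1})\cdot \mathd B_l=\sqrt{2\delta}\sum_{J\in\mathcal{D}_l^+}\partial_1 f(B_{l-1}(J))\,\varepsilon_J+\sqrt{2\delta}\sum_{J\in\mathcal{D}_l^-}\partial_2 f(B_{l-1}(J))\,\varepsilon_J .
\]

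Here one must check that $\Sha$, which sends $h_{I_+}\mapsto h_{I_-}$ and $h_{I_-}\mapsto -h_{I_+}$, transports the coefficient correctly. The coefficient attached to $h_{I_+}$ is evaluated at $B_{l-1}$ on $I_+$, whereas after the swap it sits on $I_-$. Since $I_\pm$ share the parent $I$, we need $B_{l-1}(I_+)=B_{l-1}(I_-)$. This holds because $B_{l-1}$ depends only on $\varepsilon_0,\dots,\varepsilon_{l-2}$, hence is constant on $I\in\mathcal{D}_{l-1}$. This is the one delicate point: the indexing of $\mathcal{D}_l$ versus the measurability of $B_{l-1}$ must be matched, in line with the convention $\mathd B_l^1=\sqrt{2\delta}\,\mathbf{1}(\varepsilon_{l-1}=+1)\varepsilon_l$. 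I expect this bookkeeping to be the main obstacle, though it is only an obstacle of care.

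With the coefficients thus predictable at the parent level, $\Sha$ acts only on the increments. By the computation preceding the lemma, $\Sha\,\mathd B^1_l=\mathd B^2_l$ and $\Sha\,\mathd B^2_l=-\mathd B^1_l$, i.e. $\Sha\,\mathd B_l=\mathd B_l^\top$. Therefore
\[
\Sha\big(\nabla f(B_{l-1})\cdot\mathd B_l\big)=\nabla f(B_{l-1})\cdot\mathd B_l^\top=\nabla^\perp f(B_{l-1})\cdot\mathd B_l=\nabla g(B_{l-1})\cdot\mathd B_l ,
\]
using the Cauchy--Riemann relation $\nabla g=\nabla^\perp f$ for the harmonic extensions. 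Summing over $l\le k$, and using that the constant term $g(B_0)=g(0,0)$ vanishes (normalizing $\mathcal{H}f$ to have mean zero, which holds for the conjugate function on $\mathbb{T}$), gives $\Sha M^f_k=M^g_k$.

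For $X$-valued $f$, the identity follows by linearity on $C^1\otimes X$. The inequality $\|M^g_k\|_p=\|\Sha M^f_k\|_p\le\|\Sha\|_{p\to p}\|M^f_k\|_p$ is then immediate, where $\|\Sha\|_{p\to p}$ may be taken on $I_0$ or on $\mathbb{R}$ by Theorem \ref{T: comparison Lp norms}.
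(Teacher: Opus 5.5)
Your skeleton is the paper's: $\Sha$ kills the constant, $\Sha\,\mathd B_l=\mathd B_l^{\top}$, then the rotation identity, $\nabla g=\nabla^{\perp}f$ and $g(0,0)=0$. The paper does not justify passing $\Sha$ through the multiplier $\nabla f(B_{l-1})$; it simply lets $\Sha$ act on $\mathd B_l$ only, by definition (see \eqref{eq:SMf}). You rightly single this out as the point needing proof, but your justification is false.

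By the very convention you quote, $\mathd B_{l-1}=\sqrt{2\delta}\,\big(\mathbf{1}(\varepsilon_{l-2}=1),\mathbf{1}(\varepsilon_{l-2}=-1)\big)\,\varepsilon_{l-1}$. So $B_{l-1}$ depends on $\varepsilon_0,\dots,\varepsilon_{l-1}$, as you yourself note for $M^f_k$. It is constant on each $J\in\mathcal{D}_l$, which is what makes $\nabla f(B_{l-1})\cdot\mathd B_l$ a martingale difference. But for $l\ge 2$ its values on the two children $I_\pm$ of $I\in\mathcal{D}_{l-1}$ differ by $2\sqrt{2\delta}$ in one coordinate. Since $\Sha$ moves the coefficient of $h_{I_+}$ onto $I_-$ and vice versa, it carries $\nabla f$ evaluated at the sibling's position. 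Precisely, if $\phi$ depends on $\varepsilon_0,\dots,\varepsilon_{l-1}$, then $\Sha(\phi\,\varepsilon_l)=-\varepsilon_{l-1}\,(\phi\circ\rho_{l-1})\,\varepsilon_l$, where $\rho_{l-1}$ flips $\varepsilon_{l-1}$. Hence
\[
\Sha\big(\nabla f(B_{l-1})\cdot\mathd B_l\big)=\nabla^{\perp}f\big(B_{l-1}-2\,\mathd B_{l-1}\big)\cdot\mathd B_l,
\]
which is not the increment of $M^g$. For example, with $f=x^2-y^2$ and $g=2xy$, a direct computation gives $\Sha M^f_2=-M^g_2\neq M^g_2$. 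So for the actual Haar operator $\Sha$ the identity $\Sha M^f_k=M^g_k$ fails, and the norm inequality cannot be read off from it.

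What does hold, and is all the limiting argument for Theorem \ref{theorem_upper} needs, is an approximate version. There are two routes.
\begin{itemize}
\item Freeze the multipliers one level earlier: replace $\nabla f(B_{l-1})$ by $\nabla f(B_{l-2})$ (with $B_{-1}:=B_0$). This depends only on $\varepsilon_0,\dots,\varepsilon_{l-2}$, so it is constant on every $I\in\mathcal{D}_{l-1}$, and your argument then gives an exact identity for the modified martingales.
\item Keep $M^f$ and estimate $\Sha M^f_k-M^g_k=\sum_{l\le k}\big[\nabla g(B_{l-1}-2\,\mathd B_{l-1})-\nabla g(B_{l-1})\big]\cdot\mathd B_l$.
\end{itemize}
In both cases the discrepancy is a martingale with increments $\lesssim\|D^2f\|_\infty\,\delta$ over up to $N^5=T/\delta$ steps. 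Summing pointwise only gives $O(T)$, so you must use cancellation. The square function gives $O(\sqrt{T\delta})$ in the Hilbert-space case. For UMD $X$, the UMD property together with a nontrivial type $t>1$ of $X$ gives $O(T^{1/t}\delta^{1-1/t})$. Either way you get $\|M^g_k\|_p\le\|\Sha\|_{p\to p}\|M^f_k\|_p+o(1)$ as $\delta\to0$.

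The stopped martingales $M^f_T$ used later need the same treatment. There the indicator $\mathbf{1}(l\le k_\varepsilon)$ with $l\equiv1\pmod N$ depends on $B_{l-1}$, so it has the same sibling dependence.
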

\begin{proof}
  Since $\Sha \mathd B_l = \mathd B_l^{\top},$ $\nabla g =
  \nabla^{\perp} f$ and $g (B_0) = 0$, we have successively, for all $k
  \geqslant 1$,
  \begin{align*}\MoveEqLeft
    (\Sha M^f)_k  \assign  \Sha \Big( f (B_0) + \sum_{l =
    1}^k \nabla f (B_{l - 1}) \cdot \mathd B_l \Big) \assign 0 + \sum_{l =
    1}^k \nabla f (B_{l - 1}) \cdot \mathcal{S} \mathd B_l\\
     = &  \sum_{l = 1}^k \nabla f (B_{l - 1}) \cdot \mathd B_l^{\top} =
    \sum_{l = 1}^k \nabla f (B_{l - 1})^{\perp} \cdot \mathd B_l
    =  M_k^g .
  \end{align*}
  Hence the result.
\end{proof}

In order to obtain from the inequality above an inequality for the continuous
martingales $\mathcal{M}^f$ and $\mathcal{M}^g$, we need to prove in a
suitable sense the convergence of the discrete martingales $M^f$ and $M^g$
towards their continuous counterparts $\mathcal{M}^f$ and $\mathcal{M}^g$.
Since we are only interested in norms of those processes, what we need to
obtain is some so--called weak convergence estimate, see e.g. {\tmname{Talay}}
or {\tmname{Kloeden}}--{\tmname{Platen}} {\cite{Tal1986n,KloPla1992}}.
Unfortunately, our situation does not exactly fit those expositions, for the
following reasons:
\begin{itemize}
  \item We consider randomly stopped processes as opposed to processes running
  on a prescribed, deterministic, interval of time.
  
  \item The discrete random walk is \tmtextit{not} a Markov process, rather a
  so--called Markov process with memory
  
  \item The quadratic covariations of the discrete process are never
  converging towards that of their continuous counterparts, since they can be
  null half of the time.
  
  \item We do not only compare discrete random walks and their continuous
  counterparts but also their martingale transforms $M^f$ and $M^g$.
\end{itemize}
We define in the next paragraphs some auxiliary processes and suitable stopped
random walks. The reader will observe in the course of the proofs that we need to finely tune the 
different scales involved in order to account for the difficulties above.
At the end of the present section we will be able to state in a precise manner
the convergence result Theorem \ref{T: convergence} suited to our needs.

\paragraph{Scaling and stopped random walks}Let $W \assign (W_t)_{t
\geqslant 0}$ the standard two dimensional Brownian process started at the
origin. Let again $\tau$ the stopping time
$  \tau \assign \inf \{ t > 0 ; W_t \nin \mathbb{D} \}, $ 
that is the first time of exit of the unit disc $\mathbb{D}$. We denote by
$W^{\tau} = (W^{\tau}_t)_{t \in [0, \infty)} \assign (W_{t \wedge \tau})_{t
\in [0, \infty)}$ the corresponding stopped process. Finally, given $t \in
\mathbb{R}$, $x \in \mathbb{D}$, we note $(W^{\tau, t, x}_{t + s})_{s
\geqslant 0}$ the process $W^{\tau}$ started at $x$ at time $t$.

Let $T > 0$ a fixed time, and $N \in \mathbb{N}$ large. Define $\delta$ a
small time--step such that $T = N^5 \delta$. In order to denote the
corresponding discrete times, we will use the indices $k, l \in [0, N^5]$,
typically $t_k \assign k \delta$, $t_l \assign l \delta$. We introduce a
larger time-step $\theta$ defined as $\theta \assign N \delta$. Notice that $T
= N^4 \theta$, so that also $\theta$ tends to zero as $N$ goes to infinity for
fixed $T$. The discrete times corresponding to this larger time--step will use
indices $n, m \in [0, N^4]$, typically $t_n \assign n \theta$, $t_m \assign m
\theta$. Given the discrete random walk $B$ defined above, we define a new
discrete random walk $X$ by sampling $B$ at times that are multiples of
$\theta = N \delta$, therefore with indices that are multiples of $N$,
 $ \forall n \geqslant 0, \quad X_n \assign B_{n N} . $ 
Notice further that
\[ \mathd X_n \assign X_n - X_{n - 1} = B_{n N} - B_{(n - 1) N} = \sum_{l =
   1}^N \mathd B_{(n - 1) N + l} . \]
In order to stop $X$ just before it leaves the unit disc, we set $\varepsilon = 1 /
N$ and define the discrete stopping time
$ \tau_{\varepsilon} \assign \inf \{ t_n ; X_n  \in (1 - \varepsilon)
   \mathbb{D} \}, $ 
with the convention $\tau_\epsilon=+\infty$ if $X_n\in (1 - \varepsilon)
   \mathbb{D}$ for all $n\in[0,N^4]$.
We will denote by $n_{\varepsilon}$ the random index such that
$\tau_{\varepsilon} \assign n_{\varepsilon} \theta$.  From the definition of
$X$ we have
\[ \forall n, \quad | X_n - X_{n - 1} | \assign | \mathd X_n | \leqslant \sum_{l = 1}^N |
   \mathd B_{(n - 1) N + l} | \leqslant N \sqrt{2 \delta} = \sqrt{2 T} N^{- 3
   / 2} . \]
It follows that for $N$ large enough, we have
$| \mathd X_n | \leqslant \varepsilon,\ \forall n\in[0,N^4]$.
In particular, before a stopping time we have $X_{\tau_{\varepsilon} - \theta}
\in (1 - \varepsilon) \mathbb{D}$ which implies that the stopped process
$X^{\tau_{\varepsilon}}$ always remains in the unit disc $\mathbb{D}$.
On the converse, if $\tau_\epsilon < +\infty$, then at stopping time,
the random walk $X_{\tau_{\varepsilon}}$ lies in
the band $\mathbb{D} \backslash (1 - \varepsilon) \mathbb{D}$ of width
$\varepsilon$ near the interior boundary $\partial \mathbb{D}$ of the unit
disc, and the next increments are null.
We note $X^{\tau_{\varepsilon}}$ the corresponding stopped process,
that is $(X_n^{\tau_{\varepsilon}})_{n\in[0,N^4]} \assign (X_{n\wedge n_\epsilon})_{n\in[0,N^4]}$.
Finally we will denote by $k_{\varepsilon}$ the random index such that
$\tau_{\varepsilon} \assign k_{\varepsilon} \delta$, or equivalently
$k_{\varepsilon} \assign N n_{\varepsilon}$. We note $B^{\tau_{\varepsilon}}$
the process $B$ stopped at $\tau_{\varepsilon}$.

Notice that the discrete martingales $(M_k^f)_{k \in [0, N^5]}$ and
$(M_k^g)_{k \in [0, N^5]}$ are martingale transforms of the discrete random
walk $(B_k)_{k \in [0, N^5]}$. We will aslo consider a sampled version of
$(M_k^f)_{k \in [0, N^5]}$, namely $(M_n^f)_{n \in [0, N^4]}$, where $M_n^f
\assign M^f_k$ for $k = n N$. We use the same notation for both processes.
Which one is meant will be clear from the context. Similarly we note
$(\mathcal{F}_k)_{k \in [0, N^5]}$ the filtration associated to the discrete
random walk $(B_k)_{k \in [0, N^5]}$ and $(\widetilde{\mathcal{F}}_n)_{n \in
[0, N^4]}$, with $\widetilde{\mathcal{F}}_n \assign \mathcal{F}_{n N}$, the
filtration associated to the coarse random walk $(X_n)_{n \in [0, N^4]}$.

\begin{remark} \label{R: probabilistic interpretation Sha}
We have defined the discrete dyadic random walks based on the requirement that for any $x\in[0,1)$, any $k\in[0,N^5]$, the increments of the random walk satisfy $(\Sha \mathd B_k)(x) = (\mathd B_k(x))^\top$. Since the random walk starts at $(0,0)$, this is equivalent to $(\Sha B_k)(x) = (B_k(x))^\top$, that is the action of the operator $\Sha$ is a clockwise orthogonal rotation.

We can go further and observe that also for the stopped processes, we have $(\Sha B^{\tau_\epsilon})(x) = (B^{\tau_\epsilon}(x))^\top$. This is thanks to the fact that the discretised grid $\sqrt{2\delta} \mathbb{Z}^2 \cap \mathbb{D}$ on which $B^{\tau_\epsilon}$ lives is invariant under orthogonal rotations.
This allows us to interpret $\Sha$ as acting directly in the probability space $\mathbb{P}\assign[0,1)$ by setting
    $B^{\tau_\epsilon}(\Sha x) \assign  (B^{\tau_\epsilon}(x))^\top, $ 
where $\Sha$ substitutes the stopping intervals of $B^{\tau_\epsilon}$ for those of $(B^{\tau_\epsilon})^\top$. We want to stress the fact that both the proof of Theorem \ref{theorem_upper} and Theorem \ref{theorem_lower} play tricks with the way we access
the probability space. 
\end{remark}

\paragraph{Notation} We assume without loss of generality that $f \in L^p_X
( \mathbb{T})$ and its harmonic extension (also noted $f$) $f \in
L^p_X (\mathbb{D})$ are smooth Frechet differentiable functions, that is $f
\in \mathcal{C}^k$ for all $k \geqslant 0$.

Now, if $f \assign f (x_1, \ldots, x_m)$ is a $X$--valued function of $m$
variables defined on the open set $U \subset \mathbb{R}^m$, we note $\mathD f
\assign (\partial_1 f, \ldots, \partial_m f)$ its derivatives in the Frechet
sense, where $\mathD f : U \times \mathbb{R}^m \rightarrow X$ is continuous.
Further, given an $m$--multiindex $\alpha \assign (\alpha_1, \alpha_2, \ldots,
\alpha_m)$ with $| \alpha | = k$ we note as usual $\mathD^{\alpha} f \assign
\partial_1^{\alpha_1} \ldots \partial_m^{\alpha_m} f$ its partial derivatives
of order $k$ in the Frechet sense, where $\mathD^{\alpha} f : U \times
(\mathbb{R}^m)^k \rightarrow X$ is continuous. Finally, using again
multiindices, monomials of the from $x^{\alpha}$, where $x \assign (x_1,
\ldots, x_m)$, are a shorthand for $x^{\alpha} \assign x_1^{\alpha_1} \ldots
x_m^{\alpha_m}$.

For the convergence results, our main parameters are $\varepsilon > 0$, a
small number, and $T > 0$, a large number. We note $c(\varepsilon)$, $c_T
(\varepsilon)$, $c (T)$  generic
functions with $c(\varepsilon)\to 0$ uniformly in $T$ as $\varepsilon \to 0$, $c_T
(\varepsilon)\to 0$ as $\varepsilon \to 0$ for any fixed $T$, $c (T) \to 0$ as $T\to \infty$
uniformly in $\varepsilon$. It is implicit that those
functions all depend on the fixed function $f$ and its derivatives. Additional
dependences will be mentioned when needed.

\

\paragraph{Convergence result}In order to prove Theorem \ref{theorem_upper}, we need the following convergence result of the discrete martingales
towards their continuous counterparts.

\begin{theorem}[Convergence of $L^p$ norms of martingales]
  \label{T: convergence}Let $f$ as above. We have
  \[ \lim_{T \rightarrow \infty} \lim_{\varepsilon \rightarrow 0} \mathbb{E} |
     M_T^f |^p =\mathbb{E} | \mathcal{M}^f_{\infty} |^p .\]
\end{theorem}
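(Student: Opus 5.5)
The plan is to compare $M^f$, stopped at $\tau_\varepsilon$ and read at the final index $N^5$ (equivalently $n=N^4$, time $T$), with the value of $f$ along the walk, namely $f(B^{\tau_\varepsilon}_{N^5}) = f(X_{n_\varepsilon\wedge N^4})$. We then transfer the question to a weak convergence statement for the stopped coarse walk $X^{\tau_\varepsilon}$ towards the stopped Brownian motion $W^\tau$. Since $f$ (the smooth harmonic extension) is bounded and uniformly continuous on $\overline{\mathbb{D}}$ together with its derivatives, the map $z\mapsto |f(z)|_X^p$ is bounded and continuous on $\overline{\mathbb{D}}$. It therefore suffices to prove two things. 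First, $\|M^f_{T}-f(X_{n_\varepsilon\wedge N^4})\|_{L^p_X}\to 0$ as $\varepsilon=1/N\to 0$ for fixed $T$. Second, $\mathbb{E}|f(X_{n_\varepsilon\wedge N^4})|^p \to \mathbb{E}|f(W_{\tau\wedge T})|^p$ as $\varepsilon\to 0$, followed by $\mathbb{E}|f(W_{\tau\wedge T})|^p\to \mathbb{E}|f(W_\tau)|^p = \mathbb{E}|\mathcal{M}^f_\infty|^p$ as $T\to\infty$. The last limit is easy: $\mathbb{P}(\tau>T)$ decays exponentially and $f$ is bounded.

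\emph{Step 1 (Taylor expansion along the walk).} For $l\le k_\varepsilon\wedge N^5$, expand $f(B_l)-f(B_{l-1})$ to third order. The first-order term is exactly the increment $\nabla f(B_{l-1})\cdot \mathd B_l$ of $M^f$. The third-order remainder is $O(\delta^{3/2})$ per step, so over $N^5$ steps it totals $O(T\delta^{1/2})\to 0$. The dangerous term is the second-order one. Only one coordinate moves at each step, so it equals
\[
\partial_{11}f(B_{l-1})\,\delta\,(1+\varepsilon_{l-1}) + \partial_{22}f(B_{l-1})\,\delta\,(1-\varepsilon_{l-1})
= \delta\,\Delta f(B_{l-1}) + \delta\,(\partial_{11}f-\partial_{22}f)(B_{l-1})\,\varepsilon_{l-1}.
\]
Harmonicity kills $\delta\,\Delta f(B_{l-1})$. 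The remaining term has total size $T$ if bounded crudely, so we need cancellation from the signs $\varepsilon_{l-1}$, and this is where the two scales $\delta$ and $\theta=N\delta$ enter. Inside each coarse block of $N$ fine steps, freeze the coefficient at the block's starting point $X_{n-1}$; the freezing error is $O(N\sqrt{\delta})$ per fine step, hence negligible in total. What remains is $\delta\,\Psi(X_{n-1})\sum_{l\in\text{block}}\varepsilon_{l-1}$, a martingale difference with respect to $(\widetilde{\mathcal{F}}_n)$ of size $O(\delta\sqrt N)$ in $L^p$ (Khintchine for the scalar sum). Summing over the $N^4$ blocks, the triangle inequality already gives $O(N^4\delta\sqrt N)=O(TN^{-1/2})\to 0$, so no UMD property is needed here. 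Stopping at $n_\varepsilon$ is compatible with all of this because the stopping is on the coarse grid, and the stopped process stays in $\mathbb{D}$ where the derivatives of $f$ are bounded.

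\emph{Step 2 (weak convergence of the stopped walk; main obstacle).} We show that the law of $X^{\tau_\varepsilon}_{N^4}$ converges to that of $W_{\tau\wedge T}$. I would use the weak-error method of Talay / Kloeden--Platen on the coarse grid, with $u(t,x)=\mathbb{E}\,\varphi(W^{\tau,t,x}_T)$ for smooth $\varphi$ and the telescoping sum $\sum_n \mathbb{E}[u(t_n,X_n)-u(t_{n-1},X_{n-1})]$. The memory of the walk is handled exactly as in Step 1: over one coarse step the conditional covariance of $\mathd X_n$ is $\theta I + O(\delta\sqrt N)$ in the averaged sense, because the fraction of horizontal moves concentrates at $1/2$. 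This produces a per-step weak error of $o(\theta)$, which sums to $o(1)$. The genuine difficulty is the boundary. $u$ is not smooth up to $\partial\mathbb{D}$, and $\tau_\varepsilon$ is triggered in the band $\mathbb{D}\setminus(1-\varepsilon)\mathbb{D}$ rather than on $\partial\mathbb{D}$. I would first try to run the telescoping only on $(1-\varepsilon)\mathbb{D}$, using interior derivative bounds on $u$ of order powers of $\varepsilon^{-1}$. These bounds are beaten by the polynomial gains $N^{-1/2}$, $N^{-3/2}$ coming from the choice $T=N^5\delta$. After that, use uniform continuity of $f$ and $\varphi$ to absorb the width-$\varepsilon$ boundary layer, together with a gambler's-ruin estimate showing that the walk, once in the band, exits in time $o(1)$ with high probability. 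If the derivative blow-up proves too strong, the fallback is a Donsker-type tightness argument plus identification of the limit as a martingale with quadratic variation $tI$, hence $W$, followed by continuity of the exit time at almost every path.
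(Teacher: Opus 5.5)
Your Step 1 is essentially the paper's Lemma~\ref{L: convergence discrete martingale transforms}: third-order Taylor expansion along the fine walk, harmonicity removing the $\Delta f$ term, freezing the coefficient on coarse blocks, and the triangle inequality over the $N^4$ blocks to get $O(TN^{-1/2})$.

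Step 2 is where you take a genuinely different route. The paper never proves that $X^{\tau_\varepsilon}_T$ converges in law. It uses that, up to $\mathbb{P}(\tau>T)+\mathbb{P}(\tau_\varepsilon>T)=c(T)$, both terminal points lie on $\partial\mathbb{D}$ or within $\varepsilon$ of it. This bound must be uniform in $\varepsilon$, and the paper obtains it from the rotated walk $\tilde B^1$, which is a simple random walk. Near the boundary, $|f|_X^p$ can be replaced by $\psi_\eta$, the Poisson extension of its mollified boundary trace. Since $\psi_\eta$ is harmonic and smooth up to $\partial\mathbb{D}$, $\psi_\eta(W^\tau_t)$ is an exact martingale, so the continuous side is just $\psi_\eta(0)$. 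The discrete side then needs only one-step moment matching (Lemma~\ref{L: discrete moments}) with $\|D^k\psi_\eta\|_\infty<\infty$. The covariance mismatch is $O(\delta)$ per coarse step, and the total error is $O(T/N+T\theta^{1/2})$. No backward Kolmogorov function and no boundary regularity ever enter. Your route aims at the stronger statement $X^{\tau_\varepsilon}_T\Rightarrow W_{\tau\wedge T}$ for each fixed $T$. This spares you the uniform-in-$\varepsilon$ tail bound, but you must then deal with the boundary.

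Your primary Talay-type sub-route does not close as you state it. With $\varepsilon=1/N$ and $\sqrt\theta=\sqrt T\,N^{-2}$, the interior bounds $|D^k u|\lesssim\varepsilon^{-k}$ cause two divergences:
\begin{itemize}
\item the summed third-order remainder is of order $N^4\theta^{3/2}\varepsilon^{-3}\sim T^{3/2}N$;
\item the covariance mismatch against $|D^2u|\lesssim \varepsilon^{-2}$ gives $N^4\delta\varepsilon^{-2}\sim TN$.
\end{itemize}
Note that this mismatch is $O(\delta)$ in conditional expectation per coarse step, not $O(\delta\sqrt N)$. So the polynomial gains do not beat these powers of $\varepsilon^{-1}$. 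To make this sub-route work you would need sharper estimates, such as bounded $D^2u$ and $|D^3u|\lesssim\min(\varepsilon^{-1},(T-t)^{-1/2})$. Obtaining them requires analysing the corner $\{t=T\}\times\partial\mathbb{D}$, where the compatibility conditions fail unless the test function is harmonic. That failure is exactly why the paper restricts to harmonic test functions, for which $u(t,x)\equiv\psi(x)$.

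Your fallback is sound, however:
\begin{itemize}
\item The predictable quadratic variation of $B^1$ up to time $t$ is $2\delta\,\#\{l\le t/\delta:\varepsilon_{l-1}=+1\}\to t$ by the law of large numbers; this is where the memory is absorbed.
\item The cross variation vanishes and the jumps are $O(\sqrt\delta)$, so the martingale functional CLT together with L\'evy's characterization gives $B\Rightarrow W$.
\item The exit map from the disc is continuous at almost every Brownian path, and the coarse steps are $\ll\varepsilon$, so $X^{\tau_\varepsilon}_T\Rightarrow W_{\tau\wedge T}$.
\item Since $|f|_X^p$ is bounded and continuous on $\overline{\mathbb{D}}$, the convergence of $\mathbb{E}|f(X^{\tau_\varepsilon}_T)|^p$ follows.
\end{itemize}
The plan is therefore correct, provided you use the fallback (or the harmonic-test-function device) for Step 2.
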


A key ingredient is the notion of weak consistency presented in the next
Section \ref{S: weak consistency}. This allows us to prove auxiliary
convergence results in Section \ref{S: auxiliary convergence results}. Finally
Section \ref{S: main results} is devoted to the proofs of the main results for the upper bound,
namely Theorem \ref{T: convergence} and Theorem \ref{theorem_upper}.

\subsection{Weak consistency and moment estimates.}\label{S: weak consistency}

Convergence results will be obtained by proving the weak consistency of the
(sampled) stopped discrete random walk $X^{\tau_{\varepsilon}}$ with the
continuous stopped two-dimensional Brownian process $W^{\tau}$. Due to the
stopping process, we can not rely on the standard definition of weak
consistency based on discrete and continuous stochastic equations with
prescribed coefficients depending smoothly on the process alone as used in
e.g. {\cite{Tal1986n,KloPla1992}}. The definition adapted to our situation
simply reads:

\begin{definition}
  We say that $X^{\tau_{\varepsilon}}$ is weakly consistent with $W^{\tau}$,
  iff there exists a function $c \assign c (\varepsilon)$ tending to zero when
  $\varepsilon$ goes to zero, such that for all $n$, all $2$--multiindices
  $\alpha \assign (\alpha_1, \alpha_2)$ with $| \alpha | = 2$, there hold
  \[ \Big| \mathbb{E} \big(X^{\tau_{\varepsilon}}_{n + 1} -
     X^{\tau_{\varepsilon}}_n \,|  \widetilde{\mathcal{F}}_n \big) -\mathbb{E} \big(
     W^{\tau, \tau_{\varepsilon}, X_n^{\tau_{\varepsilon}}}_{t_{n + 1}} -
     W_{t_n}^{\tau, \tau_{\varepsilon}, X_n^{\tau_{\varepsilon}}} \;| 
     \widetilde{\mathcal{F}}_n \big) \Big| \leqslant \theta c
     (\varepsilon), \]
  \[ \Big| \mathbb{E} \big((X^{\tau_{\varepsilon}}_{n + 1} -
     X^{\tau_{\varepsilon}}_n)^{\alpha}\; |  \widetilde{\mathcal{F}}_n\big)
     -\mathbb{E} \big( ( W^{\tau, \tau_{\varepsilon},
     X_n^{\tau_{\varepsilon}}}_{t_{n + 1}} - W_{t_n}^{\tau,
     \tau_{\varepsilon}, X_n^{\tau_{\varepsilon}}})^{\alpha} \; | 
     \widetilde{\mathcal{F}}_n \big) \Big| \leqslant \theta c
     (\varepsilon) . \]
\end{definition}
We can now state
\begin{lemma}
  \label{L: weak consistency} The discrete stopped process
  $X^{\tau_{\varepsilon}}$ is weakly consistent with the continuous stopped
  process $W^{\tau}$.
\end{lemma}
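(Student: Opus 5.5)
We compare, conditionally on $\widetilde{\mathcal{F}}_n$, the first and second moments of one coarse step of $X^{\tau_\varepsilon}$ with those of $W^\tau$ started at $X_n^{\tau_\varepsilon}$ at time $t_n$ and run for time $\theta$. Both processes are martingales, so the first-moment condition holds exactly. For the stopped discrete walk, $X^{\tau_\varepsilon}_{n+1}-X^{\tau_\varepsilon}_n=\mathbf{1}(n<n_\varepsilon)\sum_{l=1}^N \mathd B_{nN+l}$. Each $\mathd B_l$ is $\sqrt{2\delta}$ times a coordinate vector multiplied by $\varepsilon_l$, with the coordinate selected by the $\mathcal{F}_{l-1}$-measurable $\varepsilon_{l-1}$. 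Hence $\mathbb{E}(\mathd B_l\,|\,\mathcal{F}_{l-1})=0$, and the indicator is $\widetilde{\mathcal{F}}_n$-measurable. For Brownian motion stopped at $\tau$, optional stopping gives a zero conditional mean. The first bound therefore holds with right-hand side $0$.

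For the second moments, first consider the event $\{n\ge n_\varepsilon\}$. There the discrete increment vanishes. The continuous process starts in the band of width $\varepsilon$ near $\partial\mathbb{D}$, and we bound its second moment by $\mathbb{E}|W^\tau_{\cdot+\theta}-W^\tau_\cdot|^2$ started near the boundary. This equals $\mathbb{E}(\theta\wedge\tau')$, where $\tau'$ is the exit time from distance $\lesssim\varepsilon$. I expect to bound it by $\theta\, c(\varepsilon)$ using the standard estimate that Brownian motion started at distance $\varepsilon$ from a smooth boundary exits within time $\theta$ with probability close to $1$ when $\varepsilon^2\ll\theta$. Here $\varepsilon=1/N$ and $\theta=T N^{-4}$, which goes the wrong way. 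This is the main obstacle, and I would revisit the exact formulation (the convention defining $\tau_\varepsilon$ and $W^{\tau,\tau_\varepsilon,\cdot}$, which after stopping presumably freezes both processes) so that this term contributes $0$. I would read the definition as comparing with $W$ stopped at $\tau_\varepsilon$ as well, which makes both sides zero.

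On $\{n<n_\varepsilon\}$ the discrete walk stays in $\mathbb{D}$ for the whole coarse step. For the Brownian side, started in $(1-\varepsilon)\mathbb{D}$, we have $\mathbb{E}((\Delta W^\tau)^\alpha)=\theta\,\delta_{\alpha}$ up to an error bounded by $\mathbb{P}(\tau<\theta)$ times $O(\theta)$-type terms. I would control this by a reflection/Gaussian tail bound $\exp(-\varepsilon^2/(C\theta))$, which is $o(1)$ since $\varepsilon^2/\theta=N^2/T\to\infty$. Here $\delta_\alpha$ is $1$ for $\alpha=(2,0),(0,2)$ and $0$ for $(1,1)$. On the discrete side, orthogonality of martingale increments gives $\mathbb{E}((\Delta X)^\alpha\,|\,\widetilde{\mathcal{F}}_n)=\sum_l\mathbb{E}(\mathd B_l^{\alpha}\,|\,\widetilde{\mathcal{F}}_n)$. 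The cross term vanishes identically, since $\mathd B^1_l\mathd B^2_l=0$ and the cross-time terms have zero conditional mean. For $\alpha=(2,0)$ we get $2\delta\sum_{l}\mathbb{P}(\varepsilon_{nN+l-1}=+1\,|\,\widetilde{\mathcal{F}}_n)$. For every $l\ge2$ this probability is exactly $1/2$, because $\varepsilon_{nN+l-1}$ is a fresh toss independent of $\widetilde{\mathcal{F}}_n$. Only the first step $l=1$ depends on the memory bit $\varepsilon_{nN}$. The total is thus $2\delta\big((N-1)/2+\mathbf{1}(\varepsilon_{nN}=+1)\big)=\theta+O(\delta)$, and $\delta=\theta/N=\theta\,\varepsilon$.

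Combining the two sides yields both estimates with $c(\varepsilon)=O(\varepsilon)+O(e^{-cN^2/T})$. This is uniform in $n$, but only for fixed $T$. If uniformity in $T$ is needed, I would note that the Gaussian tail term already tends to zero with $\varepsilon$ for fixed $T$, and accept $c_T(\varepsilon)$.
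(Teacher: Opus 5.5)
Your proposal is correct and is essentially the paper's argument. The paper likewise combines two moment computations:
\begin{itemize}
\item an exact discrete one: zero conditional mean, $\mathd B^1_l\mathd B^2_l=0$ for the cross term, and only the first step of each block depending on the memory bit $\varepsilon_{nN}$, which gives $\theta(1+O(\varepsilon))$;
\item a continuous one: since $\sqrt{\theta}\ll\varepsilon$, exit from $\mathbb{D}$ within time $\theta$ is unlikely when starting in $(1-\varepsilon)\mathbb{D}$, and this indeed only yields $c_T(\varepsilon)$.
\end{itemize}
Your treatment of $n\ge n_\varepsilon$, with both increments frozen, matches the paper's remark that the estimate is trivial after stopping; moreover, only $n<n_\varepsilon$ is ever used in Lemma \ref{L: weak convergence}.
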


\begin{proof}
  The weak consistency is a straightforward consequence of the next two
  moments lemmas.
\end{proof}

\begin{lemma}[Discrete Moments]
  \label{L: discrete moments}Let $X^{\tau_{\varepsilon}}$ as above. There
  exists a function $c \assign c (\varepsilon)$ tending to zero when
  $\varepsilon$ goes to zero, such that for all $n < n_{\varepsilon}$, all
  $2$--multiindeces $\alpha \assign (\alpha_1, \alpha_2)$ with $| \alpha | = 2$,
  there hold
  \[ \mathbb{E} \big( \mathd X^{\tau_{\varepsilon}}_{n + 1} \;|
     {\widetilde{\mathcal{F}}_n}  \big) = 0, \quad \mathbb{E} \big((\mathd X^{\tau_{\varepsilon}}_{n + 1})^{\alpha}\; |
     \widetilde{\mathcal{F}}_n\big) =\tmmathbf{1} (\alpha_1 \neq \alpha_2) \theta
     (1 + c (\varepsilon)). \]
  Moreover for all $p \geqslant 2$, all $i = 1, 2$, there holds $ 
   \mathbb{E} (| \mathd X_{n + 1}^{\tau_{\varepsilon}, i} |^p |
     \widetilde{\mathcal{F}}_n) \lesssim \theta^{p / 2} . $ 
\end{lemma}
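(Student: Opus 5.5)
We will work on the event $\{n<n_\varepsilon\}$, which belongs to $\widetilde{\mathcal{F}}_n$ since $\tau_\varepsilon$ is a stopping time for the coarse filtration. On this event the stopped increment coincides with the unstopped one, $\mathd X^{\tau_\varepsilon}_{n+1}=\mathd X_{n+1}=\sum_{l=1}^N \mathd B_{nN+l}$. So it suffices to compute conditional moments of this block of $N$ fine increments given $\widetilde{\mathcal{F}}_n=\mathcal{F}_{nN}$. The basic observation is that each fine increment is a martingale difference in the fine filtration. Indeed, $\mathd B^1_l=\sqrt{2\delta}\,\tmmathbf{1}(\varepsilon_{l-1}=+1)\,\varepsilon_l$, and similarly for $\mathd B^2_l$. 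Here the indicator is determined by the past, while $\varepsilon_l$ is a fresh symmetric sign independent of it. Hence $\mathbb{E}(\mathd B_l\,|\,\mathcal{F}_{l-1}')=0$, where $\mathcal{F}'_{l-1}$ denotes the $\sigma$-algebra generated by the tosses up to $\varepsilon_{l-1}$. The tower property then gives $\mathbb{E}(\mathd X_{n+1}\,|\,\widetilde{\mathcal{F}}_n)=0$.

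For the second moments I will expand the square of the block sum. The cross terms $\mathd B^i_l\,\mathd B^j_{l'}$ with $l<l'$ have zero conditional expectation by the same martingale-difference argument, since $\mathd B^i_l$ is known at time $l'-1$. The diagonal mixed terms vanish identically, $\mathd B^1_l\,\mathd B^2_l=0$, because the left and right tosses have disjoint supports. This gives the vanishing of the $(1,1)$ moment. For $\alpha=(2,0)$ we get
\[
\mathbb{E}\big((\mathd X^1_{n+1})^2\,|\,\widetilde{\mathcal{F}}_n\big)=2\delta\sum_{l=1}^N \mathbb{P}\big(\varepsilon_{nN+l-1}=+1\,|\,\widetilde{\mathcal{F}}_n\big).
\]
For every $l$ except possibly the first, the toss $\varepsilon_{nN+l-1}$ is independent of $\widetilde{\mathcal{F}}_n$, so the probability is $1/2$. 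The first term may have been fixed by the conditioning, depending on the indexing convention, and then it contributes $0$ or $2\delta$. Altogether we get $N\delta+O(\delta)=\theta(1+O(N^{-1}))=\theta(1+c(\varepsilon))$, since $\varepsilon=1/N$. The case $(0,2)$ is symmetric.

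For the $p$-th moments, $p\ge 2$, I will apply Burkholder's square function inequality, in its conditional form, to the martingale $l\mapsto\sum_{l'\le l}\mathd B^i_{nN+l'}$ given $\widetilde{\mathcal{F}}_n$. Since $|\mathd B^i_l|\le\sqrt{2\delta}$, the square function is bounded by $(2N\delta)^{1/2}=(2\theta)^{1/2}$, so $\mathbb{E}(|\mathd X^{\tau_\varepsilon,i}_{n+1}|^p\,|\,\widetilde{\mathcal{F}}_n)\lesssim\theta^{p/2}$. After stopping the increment is $0$, so the bound also holds trivially for $n\ge n_\varepsilon$. A cruder Hoeffding bound would also suffice here.

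The only delicate point is bookkeeping. We need the measurability of $\{n<n_\varepsilon\}$, the memory effect whereby the first step's direction is possibly fixed by $\widetilde{\mathcal{F}}_n$, and the check that this boundary effect is of relative size $1/N$. That is exactly why the coarse step $\theta=N\delta$ is used rather than $\delta$.
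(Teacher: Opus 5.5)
Your proof is correct. For the mean and the second moments it is the paper's argument. The fine increments $\mathd B_l$ are martingale differences, the products $\mathd B^1_l\,\mathd B^2_l$ vanish by disjoint supports, and the count $\tmmathbf{1}(\varepsilon_{nN}=+1)\,2\delta+(N-1)\delta=\theta(1+O(N^{-1}))$ is exactly your boundary-term bookkeeping. You are in fact more explicit than the paper about the off-diagonal cross terms in the $(1,1)$ moment and about $\{n<n_\varepsilon\}\in\widetilde{\mathcal{F}}_n$.

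The real difference is the $p$-th moment bound. The paper expands $\mathbb{E}(|\mathd X^1_{n+1}|^{2p}\mid\widetilde{\mathcal{F}}_n)$ into a $2p$-fold sum over fine indices. It bounds this by a sum over paired index patterns to get $\lesssim N^p\delta^p$, and then reaches general $p\ge 2$ by H\"older. You instead apply Burkholder's square-function inequality (or Azuma--Hoeffding) on each atom of $\widetilde{\mathcal{F}}_n$, using only $|\mathd B^i_l|\le\sqrt{2\delta}$. This handles all real $p\ge 2$ at once with no combinatorics.

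Your route is also the more robust one here, because with the one-step memory unpaired index patterns need not vanish. For $l$ well inside the block,
\[
\mathbb{E}\big(\mathd B^1_l\,\mathd B^1_{l+1}\,(\mathd B^1_{l+2})^2\,\big|\,\widetilde{\mathcal{F}}_n\big)=(2\delta)^2\,\mathbb{P}(\varepsilon_{l-1}=\varepsilon_l=\varepsilon_{l+1}=+1)=\delta^2/2,
\]
so the expansion needs an extra check that such chains are of lower order in $N$. Your square-function bound makes that check unnecessary. What the paper's route buys in exchange is only self-containedness: it quotes no martingale inequality.
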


Notice that $\tmmathbf{1} (\alpha_1 \neq \alpha_2) = 1$ if $\alpha \in \{ (2,
0), (0, 2) \}$ and $\tmmathbf{1} (\alpha_1 \neq \alpha_2) = 0$ if $\alpha =
(1, 1)$.

\begin{lemma}[Continuous Moments]
  \label{L: continuous moments} Let $W^{\tau}$ and $W^{\tau, t, x}$ as above.
  There exists a function $c \assign c_T (\varepsilon)$ tending to zero when
  $\varepsilon$ goes to zero for all fixed $T$, such that for all
  $2$--multiindeces $\alpha \assign (\alpha_1, \alpha_2)$ with $| \alpha | = 2$,
  there hold
  \[  \forall t \geqslant 0, \forall x \in \mathbb{D}, \quad \mathbb{E} \big(W_{t +
     \theta}^{\tau, t, x} - W_t^{\tau, t, x}\big) = 0, \]
  \[ \forall t \geqslant 0, \forall x \in (1 - \varepsilon) \mathbb{D}, \quad
     \mathbb{E} \big((W_{t + \theta}^{\tau, t, x} - W_t^{\tau, t, x})^{\alpha}\big)
     =\tmmathbf{1} (\alpha_1 \neq \alpha_2) \theta \big[1 + c_T (\varepsilon)\big],
  \]
  \[ \forall t \geqslant 0, \forall x \in \mathbb{D}, \forall p \geqslant 2,
     \quad \mathbb{E} \big(| W_{t + \theta}^{\tau, t, x} - W_t^{\tau, t, x} |^p\big)
     \lesssim \theta^{p / 2} . \]
\end{lemma}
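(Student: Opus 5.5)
The plan is to reduce all three statements to optional stopping and Itô calculus for the stopped Brownian motion over a short time window. Fix $t\ge 0$ and $x\in\mathbb{D}$. By the strong Markov property (in fact only time homogeneity is needed here), $W^{\tau,t,x}_{t+s}-W^{\tau,t,x}_t$ has the same law as $\widetilde W_{s\wedge\sigma}$. Here $\widetilde W$ is a standard planar Brownian motion started at $0$, and $\sigma\assign\inf\{s>0 : x+\widetilde W_s\nin\mathbb{D}\}$ is its exit time from the translated disc. We therefore need to estimate moments of $Y\assign\widetilde W_{\theta\wedge\sigma}$, where $\theta\wedge\sigma$ is a bounded stopping time.

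\emph{First moment.} Each coordinate $\widetilde W^i$ is a martingale and $\theta\wedge\sigma\le\theta$ is bounded. Optional stopping therefore gives $\mathbb{E}\,Y=0$ for every $x\in\mathbb{D}$.

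\emph{Second moments.} For $i,j\in\{1,2\}$, the process $\widetilde W^i_s\widetilde W^j_s-\delta_{ij}s$ is a martingale: for $i\neq j$ this is independence of the coordinates, and for $i=j$ it is the quadratic variation, or It\^o's formula. Optional stopping at the bounded time $\theta\wedge\sigma$ gives
\[ \mathbb{E}\,(Y^i Y^j)=\delta_{ij}\,\mathbb{E}(\theta\wedge\sigma). \]
The mixed moment $\alpha=(1,1)$ thus vanishes exactly, and each pure square equals $\mathbb{E}(\theta\wedge\sigma)$. It remains to show that $\mathbb{E}(\theta\wedge\sigma)=\theta(1+c_T(\varepsilon))$ when $x\in(1-\varepsilon)\mathbb{D}$. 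Clearly
\[ 0\le\theta-\mathbb{E}(\theta\wedge\sigma)\le\theta\,\mathbb{P}(\sigma<\theta). \]
Since $\mathrm{dist}(x,\partial\mathbb{D})\ge\varepsilon$, the event $\{\sigma<\theta\}$ forces $\sup_{s\le\theta}|\widetilde W_s|\ge\varepsilon$. Applying the reflection principle (or Doob's inequality applied to exponential martingales) to each coordinate yields
\[ \mathbb{P}(\sigma<\theta)\le 4\exp\!\big(-\varepsilon^2/(4\theta)\big). \]
With the scalings $\theta=N\delta=T N^{-4}$ and $\varepsilon=1/N$, we have $\varepsilon^2/\theta=N^2/T=1/(T\varepsilon^2)$. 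Hence $\mathbb{P}(\sigma<\theta)\le 4e^{-1/(4T\varepsilon^2)}\backassign c_T(\varepsilon)$, which tends to $0$ as $\varepsilon\to0$ for each fixed $T$, though not uniformly in $T$; this explains the subscript in the statement. This step is the only one where the restriction $x\in(1-\varepsilon)\mathbb{D}$ and the choice of the scales $N^5,N^4,N$ enter. It is the main point to check: the time step $\theta$ must be much smaller than $\varepsilon^2$, so that the motion started at distance at least $\varepsilon$ from the boundary essentially never reaches it within one step.

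\emph{Higher moments.} For $p\ge2$, apply the Burkholder--Davis--Gundy inequality to the stopped martingale $\widetilde W^{\theta\wedge\sigma}$, whose quadratic variation is $\theta\wedge\sigma$. Together with the elementary bound $|Y|^p\lesssim|Y^1|^p+|Y^2|^p$, this gives
\[ \mathbb{E}|Y|^p\lesssim\mathbb{E}\big[(\theta\wedge\sigma)^{p/2}\big]\le\theta^{p/2}. \]
The implicit constant depends only on $p$, and the bound holds for every $x\in\mathbb{D}$. Alternatively, one can bound $\mathbb{E}\sup_{s\le\theta}|\widetilde W_s|^p$ directly by Brownian scaling and Doob's maximal inequality.
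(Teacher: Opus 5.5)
Your proof is correct and follows the same route as the paper. In both, the second moments are identified with the expected stopped time via It\^o/quadratic variation, and the defect is bounded by $\theta$ times the probability of exiting within one step, which is small because $\sqrt{\theta}\ll\varepsilon$. The higher moments then come from standard Brownian moment bounds.

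Your version is more explicit than the paper's. Optional stopping of $\widetilde W^i\widetilde W^j-\delta_{ij}s$ gives the exact identity $\mathbb{E}(Y^iY^j)=\delta_{ij}\,\mathbb{E}(\theta\wedge\sigma)$, and the paper's appeal to ``standard hitting time estimates'' is replaced by the concrete bound $\mathbb{P}(\sigma<\theta)\le 4e^{-1/(4T\varepsilon^2)}$.
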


\begin{proof*}{Proof of Lemma \ref{L: discrete moments}}
  {\dueto{Discrete Moments}}For simplicity, we omit the superfix
  $\tau_{\varepsilon}$ in the discrete stopped processes
  $X^{\tau_{\varepsilon}}$ and $B^{\tau_{\varepsilon}}$. Recall that
  $(\mathcal{F}_k)_{k \in [0, N^5]}$ is the filtration associated to the
  discrete random walk $(B_k)_{k \in [0, N^5]}$, and
  $(\widetilde{\mathcal{F}}_n)_{n \in [0, N^4]}$, with
  $\widetilde{\mathcal{F}}_n \assign \mathcal{F}_{n N}$, the filtration
  associated to the coarse random walk $(X_n)_{n \in [0, N^4]}$. Recall
  finally that $X_n \assign B_{n N}$, and therefore $\mathd X_{n + 1} \assign
  X_{n + 1} - X_n = \sum_{l = 1}^N \mathd B_{n N + l}$. We are only interested
  in increments occuring before stopping, otherwise the estimate is trivial,
  all increments being zero after stopping. We have for all $n \leqslant N^4$, that 
   $\mathbb{E} (\mathd X_{n + 1} | \widetilde{\mathcal{F}}_n) = \sum_{l =
     1}^N \mathbb{E} (\mathd B_{n N + l} | \mathcal{F}_{n N}) = 0$
  since $B$ is a martingale. Now for the second order moments, let $\alpha =
  (\alpha_1, \alpha_2)$ a multiindex with $| \alpha | = 2$. We estimate for
  example the variance of the first coordinate
  \[ 
    \mathbb{E} ((\mathd X^1_{n + 1})^2 | \widetilde{\mathcal{F}}_n)  \assign
     \mathbb{E} \big( \sum_{l, l' = 1}^N \mathd B_{n N + l}^1 \mathd B_{n N
    + l'}^1 | \mathcal{F}_{n N} \big) = \sum_{l = 1}^N \mathbb{E} ((\mathd
    B_{n N + l}^1)^2 | \mathcal{F}_{n N})
  \]
  where we used that $B$ is a martingale. We have in the sum above for $l =
  1$,
  \[ 
    \mathbb{E} ((\mathd B_{n N + 1}^1)^2 | \mathcal{F}_{n N})  =  \mathbb{E}
    \big( \tmmathbf{1} (\varepsilon_{n N} = + 1)^2 \varepsilon_{n N + 1}^2 
     \sqrt{2 \delta} ^2 | \mathcal{F}_{n N} \big) =\tmmathbf{1}
    (\varepsilon_{n N} = + 1) 2 \delta .
  \]
  For the other summands, with $l \geqslant 2$, we have
  \begin{align*}\MoveEqLeft
    \mathbb{E} \big((\mathd B_{n N + l}^1)^2 | \mathcal{F}_{n N}\big)  
    =  \mathbb{E}
    \big( \tmmathbf{1} (\varepsilon_{n N + l - 1} = + 1)^2 \varepsilon_{n N +
    l}^2   \sqrt{2 \delta} ^2 | \mathcal{F}_{n N} \big)\\
     = & 2 \delta \; \mathbb{E} \big(\tmmathbf{1} (\varepsilon_{n N + l - 1}
    = + 1)  | \mathcal{F}_{n N}\big) = 2 \delta \Big[ \frac{1}{2} \cdot 0 +
    \frac{1}{2} \cdot 1 \Big] = \delta .
  \end{align*}
  Since $\theta = N \delta$, we have, for $\alpha = (2, 0)$,
  \[ 
    \mathbb{E} \big((\mathd X^1_{n + 1})^2 | \widetilde{\mathcal{F}}_n\big)  = 
    \big[\tmmathbf{1} (\varepsilon_{n N} = + 1) 2 \delta + (N - 1) \delta\big] =
    \theta (1 + c (\varepsilon)),
  \]
  and the same estimate holds for the second coordinate. Finally since $\mathd
  B^1_n (x) \mathd B^2 (x) = 0$ for all $n$, all $x$, it follows $\mathbb{E}
  (\mathd X^1_{n + 1} \mathd X^2_{n + 1} | \widetilde{\mathcal{F}}_n) = 0$ for
  all $n$.
  
  For the higher order moments, if $| \alpha | = p$, $\mathbb{E} (\mathd X_{n
  + 1}^{\alpha} | \widetilde{\mathcal{F}}_n) \lesssim \mathbb{E} (| \mathd
  X_{n + 1}^1 |^p | \widetilde{\mathcal{F}}_n) +\mathbb{E} (| \mathd X_{n +
  1}^2 |^p | \widetilde{\mathcal{F}}_n)$. For integer moments, estimate for
  example
  \begin{align*} \MoveEqLeft
    \mathbb{E} \big(| \mathd X_{n + 1}^1 |^{2 p} \, | \widetilde{\mathcal{F}}_n\big)  =
    \sum_{l_1, \ldots, l_{2 p}} \mathbb{E} \big(\mathd B^1_{n N + l_1} \ldots
    \mathd B_{n N + l_{2 p}}^1 \, | \widetilde{\mathcal{F}}_n\big) \\
     \leqslant & \frac{(2 p) !}{2} \sum_{l_1, l_3, \ldots l_{2 p - 1}}
    \mathbb{E} \big((\mathd B^1_{n N + l_1})^2 (\mathd B^1_{n N + l_3})^2 \ldots
    (\mathd B_{n N + l_{2 p - 1}}^1)^2 \, | \widetilde{\mathcal{F}}_n\big)
    \\
     \lesssim & N^p \delta^p \lesssim \theta^p . \nonumber
  \end{align*}
  
  By H{\"o}lder, we get for any $p \geqslant 2$, $\mathbb{E} (| \mathd X_{n +
  1}^1 |^p \;| \widetilde{\mathcal{F}}_n) \lesssim \theta^{p / 2}$.
\end{proof*}

\begin{proof*}{Proof of Lemma \ref{L: continuous moments}}
  {\dueto{Continuous Moments}}The first statement is obvious. For the second
  statement, notice that $x \in (1 - \varepsilon) \mathbb{D}$ implies
  $\tmop{dist} (x, \partial \mathbb{D}) \geqslant \varepsilon$. We now take
  advantage of the fact that the standard mean deviation of the non--stopped
  Brownian motion $W_{t + \theta}^{t, x} - W_t^{t, x}$ over one time step
  $\theta$  is $\sigma_{\theta} = \sqrt{\theta} = \sqrt{T / N^4} = \varepsilon
  \sqrt{T} / N$. Therefore $\sigma_{\theta} \ll \varepsilon$, i.e.
  $\sigma_{\theta} = c_T (\varepsilon) \varepsilon .$ This means that for $x
  \in (1 - \varepsilon) \mathbb{D}$, we have $\tmop{dist} (x, \partial
  \mathbb{D}) \geqslant \varepsilon$.In other words the point $x$ is ``very
  far away'' from the boundary as compared to the distance the Brownian motion
  can diffuse. More precisely, as a consequence of standard estimates of first
  hitting times of the Brownian motion, we have for such an $x$ and the
  corresponding stopped process $W_t^{\tau, t, x}$, that
  $\mathbb{P} (t + \theta > \tau) = c (\sigma_{\theta} / \varepsilon) = c
     (c_T (\varepsilon)) = c_T (\varepsilon) .$ 
  Note now $(W^{\tau, t, x, i}_s)_{i = 1, 2}$ the two components of the
  Brownian motion. Consider for example the second moment of the first
  coordinate. Using It{\^o} formula and taking expectation yields
   \begin{align*}\MoveEqLeft
    \mathbb{E} \big(W_{t + \theta}^{\tau, t, x, 1} - W_t^{\tau, t, x, 1}\big)^2 
    = \frac{1}{2} \mathbb{E} \int_t^{(t + \theta) \wedge \tau} \mathd [W^{\tau,
    t, x, 1}, W^{\tau, t, x, 1}]_s \\
     =& \frac{1}{2} \mathbb{E} \Big( \int_t^{(t + \theta)} \mathd [W^{\tau,
    t, x, 1}, W^{\tau, t, x, 1}]_s  \; | t + \theta \leqslant \tau \Big)
    \mathbb{P} (t + \theta \leqslant \tau) \nonumber\\
    &  \quad  + \frac{1}{2} \mathbb{E} \Big( \int_t^{(t + \theta)} \mathd [W^{\tau,
    t, x, 1}, W^{\tau, t, x, 1}]_s \; | t + \theta > \tau \Big) \mathbb{P} (t
    + \theta > \tau) \nonumber\\
     \backassign & \theta \delta_{\alpha_1 \alpha_2} \mathbb{P} (t + \theta
    \leqslant \tau) + A (t, x, \theta) \mathbb{P} (t + \theta > \tau),
    \nonumber
  \end{align*}
  
  where clearly $0 \leqslant A (t, x, \theta) \leqslant \theta$ for all $(t,
  x)$. This yields the second statement since $\mathbb{P} (t + \theta > \tau)
  = c_T (\varepsilon)$ and $\mathbb{P} (t + \theta \leqslant \tau) = 1 - c_T
  (\varepsilon)$ in the case $\alpha = (2, 0)$. The case $\alpha = (0, 2)$ is
  similar. The case $\alpha = (1, 1)$ is trivial since $\mathd [W^{\tau, t, x,
  1}, W^{\tau, t, x, 2}]_s = 0$. The third statement of the Lemma follows the
  same lines using the known higher moments for the Brownian motion.
\end{proof*}

\subsection{Auxiliary convergence results}\label{S: auxiliary convergence
results}

The goal of this section is to prove the following two convergence results:

\begin{lemma}[Weak convergence]
  \label{L: weak convergence}  Let $T > 0$. Let $\psi$
  harmonic on $\mathbb{D}$ with $\psi$ smooth on $\partial \mathbb{D}$. Assume
  weak consistency. Then we have
  \[ \mathbb{E} \psi (X_T^{\tau_{\varepsilon}}) =\mathbb{E} \psi (W_T^{\tau})
     + c_{\psi, T} (\varepsilon) + c (T). \]
\end{lemma}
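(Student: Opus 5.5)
The plan is the standard Talay--Kloeden--Platen weak-error argument: a telescoping sum over the coarse time steps, with the harmonicity of $\psi$ doing the work that a backward Kolmogorov equation does in the usual setting. Since $W^\tau$ is a Brownian motion stopped at the exit time of $\mathbb{D}$ and $\psi$ is harmonic, the map $x\mapsto \psi(x)$ is invariant under the stopped dynamics: $\mathbb{E}\,\psi(W^{\tau,t,x}_{s})=\psi(x)$ for all $s\ge t$ and $x\in\mathbb{D}$. Moreover $\mathbb{E}\psi(W^\tau_T)\to \psi(0)=\mathbb{E}\psi(W_\tau)$ as $T\to\infty$, because $\mathbb{P}(\tau>T)\to 0$ exponentially and $\psi$ is bounded. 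This is where the $c(T)$ term comes from. The claim therefore reduces to showing
\[
\big|\mathbb{E}\psi(X^{\tau_\varepsilon}_T)-\psi(0)\big| \le c_{\psi,T}(\varepsilon).
\]

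Write the quantity as a telescoping sum over coarse steps:
\[
\mathbb{E}\psi(X^{\tau_\varepsilon}_{N^4})-\psi(X_0)=\sum_{n=0}^{N^4-1}\mathbb{E}\Big[\mathbb{E}\big(\psi(X^{\tau_\varepsilon}_{n+1})-\psi(X^{\tau_\varepsilon}_n)\,\big|\,\widetilde{\mathcal F}_n\big)\Big].
\]
Terms with $n\ge n_\varepsilon$ vanish, since the increments are zero after stopping. Before stopping, $X^{\tau_\varepsilon}_n\in(1-\varepsilon)\mathbb{D}$ and $|\mathd X_{n+1}|\le\varepsilon$, so the segment stays in $\mathbb{D}$. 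Taylor expand $\psi$ to third order around $X_n$:
\[
\psi(X_{n+1})-\psi(X_n)=\mathD\psi(X_n)\cdot\mathd X_{n+1}+\tfrac12\sum_{|\alpha|=2}\tbinom{2}{\alpha}\mathD^\alpha\psi(X_n)(\mathd X_{n+1})^\alpha+R_n,
\qquad |R_n|\lesssim \|\mathD^3\psi\|_{L^\infty(\mathbb{D})}|\mathd X_{n+1}|^3 .
\]
Now apply Lemma \ref{L: discrete moments}. The first-order term has conditional expectation zero. The mixed second moment vanishes, and the pure second moments equal $\theta(1+c(\varepsilon))$, so the second-order term becomes $\tfrac{\theta}{2}\Delta\psi(X_n)+\theta c(\varepsilon)\,|\mathD^2\psi(X_n)|$. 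The Laplacian piece is $0$ by harmonicity. The remainder satisfies $\mathbb{E}(|R_n|\mid\widetilde{\mathcal F}_n)\lesssim\|\mathD^3\psi\|_\infty\,\theta^{3/2}$.

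Summing over at most $N^4$ steps, with $N^4\theta=T$, gives
\[
\big|\mathbb{E}\psi(X^{\tau_\varepsilon}_T)-\psi(0)\big|\lesssim T\big(c(\varepsilon)\|\mathD^2\psi\|_\infty+\theta^{1/2}\|\mathD^3\psi\|_\infty\big),
\]
and the right-hand side is $c_{\psi,T}(\varepsilon)$ since $\theta=T\varepsilon^4$. The same computation can be run on the continuous side via Lemma \ref{L: continuous moments}, which is the formal meaning of ``assume weak consistency''. With the harmonic test function, however, exact invariance already handles that side.

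The main obstacle is that the derivatives of $\psi$ must be bounded uniformly on the region the walk visits, up to the band near $\partial\mathbb{D}$. I would use the hypothesis that $\psi$ is smooth on $\partial\mathbb{D}$: the Poisson extension of smooth boundary data is then $C^\infty(\overline{\mathbb{D}})$, so $\|\mathD^k\psi\|_{L^\infty(\mathbb{D})}<\infty$ for $k\le3$. A secondary point is the $X$-valued setting. Taylor's formula with integral remainder holds for Fréchet-smooth $X$-valued maps, and the moment identities involve only scalar coefficients, so the argument goes through verbatim with norms in $X$.
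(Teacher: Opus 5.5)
Your proof is correct, and it is leaner than the paper's. The paper also telescopes over coarse steps and Taylor-expands to third order, but it does two things you avoid. First, it splits on $\{\tau_\varepsilon\le T\}$ and bounds $\mathbb{P}(\tau_\varepsilon>T)=c(T)$ uniformly in $\varepsilon$, using the $\pi/4$-rotated walk $\tilde B^1$ and hitting-time estimates for simple random walk. This splitting, not the continuous side, is where the paper's $c(T)$ comes from. Second, it inserts the vanishing quantities $\mathbb{E}\,\psi(W^{\tau,t_{n-1},x}_{t_n})-\psi(x)$ into each step and expands both the discrete and the continuous increment. The first, second and third order terms are then matched through weak consistency, i.e.\ through both Lemma~\ref{L: discrete moments} and Lemma~\ref{L: continuous moments}.

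You instead kill the second-order drift exactly with $\Delta\psi=0$, and you use the exact identity $\mathbb{E}\,\psi(W^\tau_T)=\psi(0)$ on the continuous side. So you never need Lemma~\ref{L: continuous moments}, whose hitting-time step $\mathbb{P}(t+\theta>\tau)=c_T(\varepsilon)$ is the least explicit part of the paper's argument. You also never need the splitting on $\tau_\varepsilon$: your per-step bound holds on $\{n<n_\varepsilon\}$, and the increments vanish otherwise, whether or not stopping occurs before $T$.

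Your own invariance identity with $t=0$, $x=0$, $s=T$ gives equality, so in your argument the $c(T)$ term is identically zero. Attributing it to $\mathbb{P}(\tau>T)\to0$ is harmless but unnecessary.

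What the paper's detour buys is the estimate $\mathbb{P}(\tau_\varepsilon>T)=c(T)$, uniform in $\varepsilon$, which is reused in the proof of Theorem~\ref{T: convergence}. If you use your proof of the lemma, that estimate has to be established separately there.

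Your remark that smooth boundary data give $\psi\in C^\infty(\overline{\mathbb{D}})$, hence bounded $\mathD^2\psi$ and $\mathD^3\psi$ up to the boundary band, makes explicit a point the paper uses silently.
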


\begin{lemma}[Weak convergence of
  discrete martingale transforms]
  \label{L: convergence discrete martingale transforms} Let $T > 0$. Let $f$ as above. Then
  \[ \| f (X_T^{\tau_{\varepsilon}}) - M_T^f \|_p = c_T (\varepsilon). \]
\end{lemma}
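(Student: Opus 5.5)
The plan is to compare $f(B_k)$ and $M^f_k$ one fine step $\delta$ at a time, along the stopped walk $B^{\tau_\varepsilon}$. I read $M_T^f$ as the stopped transform $f(0,0)+\sum_{l\le N^5}\tmmathbf{1}(l\le k_\varepsilon)\nabla f(B_{l-1})\cdot\mathd B_l$. Since $X_T^{\tau_\varepsilon}=B_{N^5\wedge k_\varepsilon}$, we have telescopically
\[ f(X_T^{\tau_\varepsilon})-M_T^f=\sum_{l=1}^{N^5}\tmmathbf{1}(l\le k_\varepsilon)\big[f(B_l)-f(B_{l-1})-\nabla f(B_{l-1})\cdot \mathd B_l\big]. \]
Here $\tmmathbf{1}(l\le k_\varepsilon)$ is $\mathcal{F}_{l-1}$--measurable, because $k_\varepsilon$ is a multiple of $N$ and is decided on the coarse grid. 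Before stopping, the walk stays in $\mathbb{D}$, and we assume $f$ smooth up to the boundary. Hence all derivatives of $f$ below are uniformly bounded along the path, with constants depending only on $f$.

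\emph{Step 1: Taylor expansion to third order.} The increment $\mathd B_l$ has exactly one nonzero coordinate, equal to $\pm\sqrt{2\delta}$. It is horizontal if $\varepsilon_{l-1}=+1$ and vertical if $\varepsilon_{l-1}=-1$. Therefore
\[ f(B_l)-f(B_{l-1})-\nabla f(B_{l-1})\cdot\mathd B_l=\delta\big[\tmmathbf{1}(\varepsilon_{l-1}=1)\partial_{11}f+\tmmathbf{1}(\varepsilon_{l-1}=-1)\partial_{22}f\big](B_{l-1})+O(\delta^{3/2}). \]
Harmonicity gives $\partial_{22}f=-\partial_{11}f$, so the bracket equals $\varepsilon_{l-1}\,\partial_{11}f(B_{l-1})$. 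This is exactly where the memory of the walk is compensated: the deterministic drift $\delta\Delta f/2$ of a usual Itô--Taylor expansion vanishes, and what remains is an oscillating term. The cubic remainders sum to at most $N^5\delta^{3/2}=T\delta^{1/2}$ in $L^\infty_X$, which is $c_T(\varepsilon)$.

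\emph{Step 2: the oscillating sum.} We must show $\|\sum_l \tmmathbf{1}(l\le k_\varepsilon)\,\delta\,\varepsilon_{l-1}\partial_{11}f(B_{l-1})\|_p=c_T(\varepsilon)$. The difficulty is that $B_{l-1}$ depends on $\varepsilon_{l-1}$, so this is not yet a martingale. We replace $\partial_{11}f(B_{l-1})$ by $\partial_{11}f(B_{l-2})$, at a cost of $O(\sqrt\delta)$ per term. This contributes a total error of $O(N^5\delta^{3/2})=c_T(\varepsilon)$.

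The indicator also needs care. When $l-1$ is a multiple of $N$, the event $\{l\le k_\varepsilon\}$ may depend on $\varepsilon_{l-1}$. We replace it by $\tmmathbf{1}(l-1\le k_\varepsilon)$, which is $\mathcal{F}_{l-2}$--measurable. The number of $l$ where the two indicators differ is $O(1)$, so the change costs $O(\delta)$. After these two substitutions the sum is $\sum_l d_l\,\varepsilon_{l-1}$, with $d_l$ $\mathcal{F}_{l-2}$--measurable and $|d_l|_X\lesssim\delta$, i.e.\ an $X$--valued martingale.

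\emph{Step 3: the Banach space estimate (main obstacle).} In a Hilbert space, orthogonality would immediately give a bound of order $\delta N^{5/2}=\sqrt{T\delta}$; in $X$ a substitute is needed. First apply the UMD property, randomizing the signs of the martingale differences, to bound the $L^p$ norm by $m_p$ times a Rademacher average $\mathbb{E}_r\|\sum_l r_l d_l\varepsilon_{l-1}\|$. Then use that a UMD space has nontrivial type $s>1$, which bounds the Rademacher average by $(\sum_l|d_l|^s)^{1/s}\lesssim \delta N^{5/s}=T^{1/s}\delta^{1-1/s}$. This tends to $0$ as $N\to\infty$, i.e.\ as $\varepsilon\to 0$, for fixed $T$. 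Collecting the three error terms yields $\|f(X_T^{\tau_\varepsilon})-M_T^f\|_p=c_T(\varepsilon)$.

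The points I expect to need the most care are the legitimacy of the UMD--plus--type step for a martingale, rather than an independent sum, and the predictability bookkeeping for the stopping indicator.
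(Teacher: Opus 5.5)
Your proof is correct, but Step 3 takes a genuinely different route from the paper. The paper never makes the second-order sum into a martingale. It groups the $N^5$ fine steps into the $N^4$ blocks of length $N$ that define the coarse walk $X$, and freezes the $X$-valued coefficient $\partial^2 f$ at the block's starting point $B_{(n-1)N}$. The total cost is $N^4\cdot N^2\delta^{3/2}=TN\delta^{1/2}$. It then bounds each block by $\delta\|D^2 f\|_\infty\,\|\sum_{l=1}^N\varepsilon_{(n-1)N+l-1}\|_p\lesssim\delta N^{1/2}$, using only the scalar Khintchine inequality. Summing over blocks with the triangle inequality gives $N^4\delta N^{1/2}=TN^{-1/2}$. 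The coefficient is estimated in sup norm, so its dependence on the first toss of the block is harmless. Also, $k_\varepsilon$ is a multiple of $N$, so stopping never cuts a block. Hence neither your shift $B_{l-1}\to B_{l-2}$ nor your indicator bookkeeping is needed there. What the paper's route buys is that the lemma holds in an arbitrary Banach space, using only smoothness of $f$ along the path and the scale separation $T=N^5\delta$. This matters: a general $X$-valued martingale with $N^5$ increments of size $\delta$ need have no cancellation at all (take $\sum_l \delta\, e_l\varepsilon_l$ in $\ell^1$, whose norm is always $T$). Your route instead pays for cancellation across all fine steps with Banach-space geometry, namely UMD randomization plus the nontrivial type of UMD spaces, giving $m_p T^{1/s}\delta^{1-1/s}$. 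This is legitimate because Theorem \ref{theorem_upper} assumes $X$ is UMD, and it does not rely on the block structure. But it makes this lemma depend on the UMD hypothesis, whereas the paper's version is geometry-free. One small point: treat the term $l=1$ separately, since $B_{l-2}$ is undefined there; it costs only $O(\delta)$.
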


\begin{proof*}{Proof of Lemma \ref{L: weak convergence}}
  Let $\psi$ harmonic on $\mathbb{D}$, with $\psi$ smooth on $\partial
  \mathbb{D}$. We first split
  \[ \mathbb{E} \psi (X_T^{\tau_{\varepsilon}}) =\mathbb{E} (\psi
     (X_T^{\tau_{\varepsilon}}) | \tau_{\varepsilon} \leqslant T) \mathbb{P}
     (\tau_{\varepsilon} \leqslant T) +\mathbb{E} (\psi
     (X_T^{\tau_{\varepsilon}}) | \tau_{\varepsilon} > T) \mathbb{P}
     (\tau_{\varepsilon} > T). \]
  We claim that the second term is small uniformly w.r.t. $\varepsilon$ when
  $T$ is large. Indeed, by definition of $\tau_{\varepsilon}$, this term
  collects the contribution of those trajectories that remained in the disc
  $(1 - \varepsilon) \mathbb{D}$ during the whole interval of time $[0, T]$.
  We claim that this is small for $T$ large. Indeed, let $\tilde{B} =
  (\tilde{B}^1, \tilde{B}^2)$ the rotation of angle $\pi / 4$ of $B$, i.e.
  $\tilde{B}^1 \assign (B^1 + B^2) / \sqrt{2}$, and $\tilde{B}^2 \assign (B^1
  - B^2) / \sqrt{2}$. It follows
  \[ \tilde{B}^1_k (x) = \sum_{l = 1}^k \varepsilon_l (x)  \sqrt{\delta},
     \quad \tilde{B}^2_k (x) = \sum_{l = 1}^k \varepsilon_{l - 1} (x)
     \varepsilon_l (x)  \sqrt{\delta}, \]
  that is both $\tilde{B}^1$ and $\tilde{B}^2$ are (non independent) standard
  centered discrete random walks. Let 
  $\tilde{\tau}^1 \assign \inf \{ t_k ; | \tilde{B}_{t_k}^1 | \geqslant 1
     \}, \quad \tilde{\tau}^2 \assign \inf \{ t_k ; | \tilde{B}_{t_k}^2 |
     \geqslant 1 \}$  the first exit times.
  But
  $  
    \tau_{\varepsilon} > T  \Leftrightarrow  X_n \in (1 - \varepsilon)
    \mathbb{D}, \; n \in [0, N^4]
     \Rightarrow  B_{n N} \in (1 - \varepsilon) \mathbb{D}, \; n \in [0,
    N^4]
     \Rightarrow  B_k \in \mathbb{D}, \quad k \in [0, N^5]
     \Rightarrow  \tilde{\tau}^1 > T \infixand \tilde{\tau}^2 > T,
  $  
  where we have used that $| B_k - B_{n N} | \leqslant N \sqrt{\delta} \ll
  \varepsilon$ for $k \in [(n - 1) N + 1, \ldots, n N]$. In particular,
  $  \mathbb{P} (\tau_{\varepsilon} > T) \leqslant \mathbb{P} (\tilde{\tau}^1
     > T) = c (T) . $ 
  The last equality is a consequence of first hitting time estimates of
  standard centered discrete random walks, see e.g. {\tmname{Lawler}}
  {\cite{Law2010a}}. The function $\psi$ being bounded on $\mathbb{D}$, we
  have also $\mathbb{E} (\psi (X_T^{\tau_{\varepsilon}}) | \tau_{\varepsilon}
  > T) \mathbb{P} (\tau_{\varepsilon} > T) = c (T)$. Similarly for the second
  term, 
  \[\mathbb{E} \big(\psi (X_T^{\tau_{\varepsilon}})\; | \tau_{\varepsilon}
  \leqslant T\big) \mathbb{P} (\tau_{\varepsilon} \leqslant T) =\mathbb{E} \big(\psi
  (X_T^{\tau_{\varepsilon}}) \;| \tau_{\varepsilon} \leqslant T\big) + c_{\psi}
  (T).\] 
  On the other hand, since $\psi$ is harmonic, we have immediately
  $\mathbb{E} \psi (W_T^{\tau}) = \psi (W_0^{\tau}) = \psi (0, 0),$ 
  so that
  \begin{align*}\MoveEqLeft
   \mathbb{E} \psi (X_T^{\tau_{\varepsilon}}) -\mathbb{E} \psi (W_T^{\tau})
     =\mathbb{E} \big(\psi (X_T^{\tau_{\varepsilon}}) \; | \tau_{\varepsilon}
     \leqslant T\big) - \psi (0, 0) + c (T) \\
     =&\mathbb{E} \big(\psi
     (X_T^{\tau_{\varepsilon}}) - \psi (X_0^{\tau_{\varepsilon}})\; |
     \tau_{\varepsilon} \leqslant T\big) + c_{\psi} (T). 
       \end{align*}
  Now since $\mathbb{E} \psi (W_{t_{n + 1}}^{\tau, t_n, x}) = \psi
  (W_{t_n}^{\tau, t_n, x}) = \psi (x)$ for all $x \in \mathbb{D}$, we have
  \begin{align*}\MoveEqLeft
     \mathbb{E} \big(\psi (X_T^{\tau_{\varepsilon}}) - \psi
    (X_0^{\tau_{\varepsilon}})\big) 
    =\mathbb{E} \sum_{n = 1}^{n_{\varepsilon}}
    \big[\psi (X_{t_n}^{\tau_{\varepsilon}}) - \psi (X_{t_{n -
    1}}^{\tau_{\varepsilon}})\big] \\
     =& \mathbb{E} \sum_{n =1}^{n_{\varepsilon}} \Big[
     \psi(X_{t_n}^{\tau_{\varepsilon}}) - \psi (X_{t_{n -1}}^{\tau_{\varepsilon}}) - \big\{ \psi ( W_{t_n}^{\tau, t_{n - 1},
    X_{t_{n - 1}}^{\tau_{\varepsilon}}} ) - \psi (X_{t_{n -
    1}}^{\tau_{\varepsilon}})\big\}\Big] \\
     =&\mathbb{E} \sum_{n = 1}^{n_{\varepsilon}} \Big[ D \psi (X_{t_{n -
    1}}^{\tau_{\varepsilon}}) \cdot \big\{ (X_{t_n}^{\tau_{\varepsilon}} -
    X_{t_{n - 1}}^{\tau_{\varepsilon}}) - \big( W_{t_n}^{\tau, t_{n - 1},
    X_{t_{n - 1}}^{\tau_{\varepsilon}}} - X_{t_{n - 1}}^{\tau_{\varepsilon}}
    \big) \big\}  \nonumber\\
    &\quad  + \frac{1}{2} \sum_{| \alpha | = 2} D^{\alpha} \psi (X_{t_{n -
    1}}^{\tau_{\varepsilon}}) \cdot \big\{ (X_{t_n}^{\tau_{\varepsilon}} -
    X_{t_{n - 1}}^{\tau_{\varepsilon}})^{\alpha} - \big( W_{t_n}^{\tau, t_{n
    - 1}, X_{t_{n - 1}}^{\tau_{\varepsilon}}} - X_{t_{n -
    1}}^{\tau_{\varepsilon}} \big)^{\alpha} \big\} \nonumber\\
    &\quad  +  R (X_{t_{n - 1}}^{\tau_{\varepsilon}},
    X_{t_n}^{\tau_{\varepsilon}}) - R \big( X_{t_{n -
    1}}^{\tau_{\varepsilon}}, W_{t_n}^{\tau, t_{n - 1}, X_{t_{n -
    1}}^{\tau_{\varepsilon}}} \big) \Big] \nonumber \backassign A + B + C, \nonumber
  \end{align*}
  where $R (x, y)$ is the Taylor rest
  $R (x, y) \assign \frac{1}{3!} \sum_{| \alpha | = 3} D^{\alpha} \psi (x +
     \theta_{x, y} (y - x)) \cdot (y - x)^{\alpha}, \quad x, y \in \mathbb{D},
     \theta_{x, y} \in [0, 1]$.
  Using the weak consistency of $X^{\tau_{\varepsilon}}$ with $W^{\tau}$, we
  get, recalling $T = N^4 \theta$,
  \begin{align*}\MoveEqLeft
    | A | 
     \leqslant  \mathbb{E} \sum_{n = 1}^{n_{\varepsilon}} \Big[ | D
    \psi (X_{t_{n - 1}}^{\tau_{\varepsilon}}) | \cdot \big| \mathbb{E}
    (X_{t_n}^{\tau_{\varepsilon}} - X_{t_{n - 1}}^{\tau_{\varepsilon}} |
    \mathcal{F}_{n - 1} \nobracket) -\mathbb{E} \big( W_{t_n}^{\tau, t_{n -
    1}, X_{t_{n - 1}}^{\tau_{\varepsilon}}} - X_{t_{n -
    1}}^{\tau_{\varepsilon}} | \mathcal{F}_{n - 1} \nobracket \big) \big|
    \Big]\\
     \leqslant & \| D \psi \|_{\infty}  \mathbb{E} \sum_{n =
    1}^{n_{\varepsilon}} \theta c (\varepsilon) 
      \leqslant \| D \psi
    \|_{\infty} N^4 \theta c (\varepsilon) 
    \lesssim c_{\psi} (\varepsilon) T 
    = c_{\psi, T} (\varepsilon),
  \end{align*}
  \begin{align}\MoveEqLeft
    | B |  \leqslant \mathbb{E} \sum_{n = 1}^{n_{\varepsilon}} \sum_{| \alpha
    | = 2} | D^{\alpha} \psi (X_{t_{n - 1}}^{\tau_{\varepsilon}}) | \times
    \nonumber\\
    & \qquad \big| \mathbb{E} \big((X_{t_n}^{\tau_{\varepsilon}} - X_{t_{n -
    1}}^{\tau_{\varepsilon}})^{\alpha} | \mathcal{F}_{n - 1} \big)
    -\mathbb{E} \big( ( W_{t_n}^{\tau, X_{t_{n -
    1}}^{\tau_{\varepsilon}}, t_n} - X_{t_{n - 1}}^{\tau_{\varepsilon}}
    )^{\alpha} | \mathcal{F}_{n - 1} \nobracket \big) \big|
    \nonumber\\
    & \lesssim \| D^2 \psi \|_{\infty} \quad \mathbb{E} \sum_{n =
    1}^{n_{\varepsilon}} \theta c (\varepsilon) \lesssim c_{\psi}
    (\varepsilon) T = c_{\psi, T} (\varepsilon), \nonumber
  \end{align}
  recalling that for the second moments, $X_{t_{n - 1}}^{\tau_{\varepsilon}} \in (1 - \varepsilon) \mathbb{D}$.
  Finally, since third order moments are at most of order $\theta^{3 / 2}$, we
  deduce
  \begin{align*}\MoveEqLeft 
  | C | =  \Big| \mathbb{E} \sum_{n = 1}^{n_{\varepsilon}} 
  \mathbb{E} 
  \big(R(X_{t_{n - 1}}^{\tau_{\varepsilon}}, X_{t_n}^{\tau_{\varepsilon}}) |
    \mathcal{F}_{n - 1} \big) 
    +\mathbb{E} \big( R ( X_{t_{n -
    1}}^{\tau_{\varepsilon}}, W_{t_n}^{\tau, X_{t_{n -
    1}}^{\tau_{\varepsilon}}, t_n} ) | \mathcal{F}_{n - 1} \nobracket
    \big) \Big|\\
     \lesssim & \| D^3 \psi \|_{\infty} \Big\{ \mathbb{E} \sum_{n =
    1}^{n_{\varepsilon}} 
    \mathbb{E} \big(| X_{t_n}^{\tau_{\varepsilon}} - X_{t_{n
    - 1}}^{\tau_{\varepsilon}} |^3 | \mathcal{F}_{n - 1} \big)
    +\mathbb{E} \big( | W_{t_n}^{\tau, X_{t_{n -
    1}}^{\tau_{\varepsilon}}, t_n} - X_{t_{n - 1}}^{\tau_{\varepsilon}}
    |^3 | \mathcal{F}_{n - 1} \nobracket \big)\Big\}\\
     \lesssim & \| D^3 \psi \|_{\infty} \mathbb{E} \sum_{n =
    1}^{n_{\varepsilon}} \theta^{3 / 2} \lesssim \| D^3 \psi \|_{\infty} T
    \theta^{1 / 2} = c_{\psi, T} (\varepsilon) .
  \end{align*}
  This concludes the proof of the weak convergence.
\end{proof*}

\begin{proof*}{Proof of Lemma \ref{L: convergence discrete martingale
transforms}}{\dueto{Convergence of discrete martingale transforms}}
  We aim at estimating $\| f (X_T^{\tau_{\varepsilon}}) - M_T^f \|_p \assign
  (\mathbb{E} | f (X_T^{\tau_{\varepsilon}}) - M_T^f |^p)^{1 / p}$. Split
  first
  \begin{align*}\MoveEqLeft
    f (X_T^{\tau_{\varepsilon}}) - f (X_0^{\tau_{\varepsilon}})
     =  f(B_T^{\tau_{\varepsilon}}) - f (B_0^{\tau_{\varepsilon}})
    = \sum_{k =
    1}^{k_{\varepsilon}} [f (B_k^{\tau_{\varepsilon}}) - f (B_{k -
    1}^{\tau_{\varepsilon}})]\\
     = & \sum_{k = 1}^{k_{\varepsilon}} \sum_{i = 1, 2} \partial_i f (B_{k -
    1}) \mathd B_k^i + \frac{1}{2} \sum_{k = 1}^{k_{\varepsilon}} \sum_{i, j =
    1, 2} \partial^2_{i j} f (B_{k - 1}) \mathd B^i_k \mathd B^j_k
       + \sum_{k = 1}^{k_{\varepsilon}} R_3^f (B_{k - 1}, \mathd B_k),
  \end{align*}
  and on the other hand we have
    $M_T^f - M_0^f  \assign  \sum_{k = 1}^{k_{\varepsilon}} \sum_{i = 1, 2}
    \partial_i f (B_{k - 1}) \mathd B_k^i .$
  Since $f (X_0^{\tau_{\varepsilon}}) = M_0^f = f (0)$, it follows simply
  \[ 
    f (X_T^{\tau_{\varepsilon}}) - M_T^f  =  \frac{1}{2} \sum_{k =
    1}^{k_{\varepsilon}} \sum_{i, j = 1, 2} \partial^2_{i j} f (B_{k - 1})
    \mathd B^i_k \mathd B^j_k + \sum_{k = 1}^{k_{\varepsilon}} R_3^f (B_{k -
    1}, \mathd B_k)
    \backassign 
    A + B.
  \]
  For the second term above, we observe that for all $k$, all $x$,
  $ | R_3^f (B_{k - 1} (x), \mathd B_k (x)) | \lesssim \| D^3 f \|_{\infty}
     \delta^{3 / 2}, $ 
  and therefore
  \[ \| B \|_p \lesssim \sum_{k = 1}^{N^5} \| D^3 f \|_{\infty} \delta^{3 / 2}
     = \| D^3 f \|_{\infty} (N^5 \delta) \delta^{1 / 2} = c_T (\varepsilon) .
  \]
  Recalling that $\tau_{\varepsilon} = n_{\varepsilon} \delta =
  k_{\varepsilon} \theta$ or equivalently $k_{\varepsilon} = N
  n_{\varepsilon}$, we split the sum $A$ into blocks of size $N$, namely $A =
  \sum_{n = 1}^{n_{\varepsilon}} A_n$, with
  \begin{align*}\MoveEqLeft
    A_n =  \frac{1}{2} \sum_{l = 1}^N \Big[\partial^2_{11} f (B_{(n - 1) N + l -
    1})  (\mathd B_{(n - 1) N + l}^1)^2 + \partial^2_{22} f (B_{(n - 1) N + l
    - 1})  (\mathd B_{(n - 1) N + l}^2)^2\Big]\\
     = & \frac{\delta}{2} \sum_{l = 1}^N \Big[\partial^2_{11} f (B_{(n - 1) N + l
    - 1}) + \partial^2_{22} f (B_{(n - 1) N + l - 1})\Big]\\
      & + \frac{\delta}{2} \sum_{l = 1}^N \Big[\partial^2_{22} f (B_{(n - 1) N +
    l - 1}) - \partial^2_{11} f (B_{(n - 1) N + l - 1})\Big] \varepsilon_{(n - 1)
    N + l - 1}\\
     = & \delta \sum_{l = 1}^N \partial^2_{22} f (B_{(n - 1) N + l - 1})
    \varepsilon_{(n - 1) N + l - 1},
  \end{align*}
  where we used $(\mathd B_{k}^i)^2 = \frac{\delta}{2}\big( 1 + (-1)^i \varepsilon_{k} \big)$ followed by the harmonicity of $f$. We split further
  \begin{align}
  \MoveEqLeft
    A_n  = \delta \sum_{l = 1}^N \Big[\partial^2_{22} f (B_{(n - 1) N + l - 1}) -
    \partial^2_{22} f (B_{(n - 1) N})\Big] \varepsilon_{(n - 1) N + l - 1}
    \nonumber\\
    & \qquad + \delta \partial^2_{22} f (B_{(n - 1) N}) \sum_{l = 1}^N
    \varepsilon_{(n - 1) N + l - 1} \nonumber \backassign B_n + C_n \nonumber
  \end{align}
  For $B_n$, we observe $| \partial^2_{22} f (B_{(n - 1) N + l - 1}) -
  \partial^2_{22} f (B_{(n - 1) N}) | \lesssim \| D^3 f \|_{\infty} l
  \sqrt{\delta}$, therefore $\| B_n \|_p \lesssim N^2 \delta^{3 / 2}$ and
  \[ \big\| \sum_{n = 1}^{N^4} B_n \big\|_p \lesssim N^6 \delta^{3 / 2} =
     (N^5 \delta) N (T N^{- 5})^{1 / 2} = c_{\varepsilon} (T), \]
  \[ \| C_n \|_p \lesssim \delta \| D^2 f \|_{\infty}  \Big( \mathbb{E}
     \Big| \sum_{l = 1}^N \varepsilon_{(n - 1) N + l - 1} \Big|^p
     \Big)^{1 / p} . \]
  Notice that the sum above is $\sum_{l = 1}^N \varepsilon_{(n - 1) N + l - 1}
  = \mathd X_n^1 + \mathd X^2_n$, and we know from the moment estimates that
  $\| \mathd X_n^i \|_p \lesssim \theta^{1 / 2}$, $i = 1, 2$. We conclude
  \[ \big\| \sum_{n = 1}^{N^4} C_n \big\|_p \lesssim N^4 \delta \| D^2 f
     \|_{\infty} (N \delta)^{1 / 2} = (N^5 \delta) \| D^2 f \|_{\infty} N^{- 1
     / 2} \delta^{1 / 2} = c_T (\varepsilon) .\]
  This concludes the proof of Lemma \ref{L: convergence discrete martingale
  transforms}.
\end{proof*}

\subsection{Finalizing the proof of the upper bound}\label{S: main results}
We start with the proof of the convergence theorem:

\begin{proof*}{Proof of Theorem \ref{T: convergence}}
  {\dueto{Convergence of $L^p$ norms of martingales}}Recall that we want to
  prove
   $\lim_{T \rightarrow \infty} \lim_{\varepsilon \rightarrow 0} \mathbb{E} |
     M_T^f |^p =\mathbb{E} | f (W^{\tau}_{\infty}) |^p.$ 
  
  Split first as the sum of three differences
   \begin{align*} 
   \MoveEqLeft
       \big\| f (W^{\tau}_{\infty}) \big\|_p - \big\| M_T^f \big\|_p 
       = \big\| f (W^{\tau}_{\infty}) \big\|_p - \big\| f (W^{\tau}_T) \big\|_p + \big\| f
       (W^{\tau}_T) \big\|_p \\
       & - \big\| f (X^{\tau_{\varepsilon}}_T) \big\|_p + \big\| f
       (X^{\tau_{\varepsilon}}_T) \big\|_p - \big\| M_T^f \big\|_p
        \backassign  A + B + C.
     \end{align*} 
  As seen before, we have $\mathbb{P} (\tau > T) = c (T)$, therefore $| A | =
  c (T)$. For the third term, we have simply $| C | \leqslant \| f
  (W^{\tau}_T) - f (X^{\tau_{\varepsilon}}_T) \|_p = c_T (\varepsilon)$ thanks
  to Lemma \ref{L: convergence discrete martingale transforms}.
  
  For the second term, define successively on the boundary $\partial
  \mathbb{D}$, $\psi (x) \assign | f (x) |_X^p$ and $\psi_{\eta} \assign \psi
  \ast \rho_{\eta}$ a mollified version of $\psi$ tending to $\psi$ when
  $\eta$ goes to zero. We also denote $\psi$ (resp. $\psi_{\eta}$) defined on
  $\mathbb{D}$ the Poisson extension of $\psi_{| \partial \mathbb{D}}$ (resp.
  $\psi_{| \partial \mathbb{D}}$). Since $f \in L^{\infty} (\partial
  \mathbb{D}; X)$, it follows that $\psi$ and $\psi_{\eta}$ are bounded in
  $\mathbb{D}$, and $\psi_{\eta} = \psi + c (\eta)$ in $L^{\infty}
  (\mathbb{D})$. Finally, notice that if $| x | < 1$, then we have for the
  Poisson extensions $\psi (x) \neq | f (x) |_X^p$. However $\psi (x) = | f
  (x) |_X^p + c (\varepsilon)$ for those $x$'s next to the boundary, i.e. $1 -
  \varepsilon < | x | \leqslant 1$. We can now estimate the first term of $B$:
  \begin{align*}\MoveEqLeft
    \big\| f (W_T^{\tau}) \big\|_p  =  \mathbb{E} (| f (W_T^{\tau}) |^p)^{1 / p}
    =\mathbb{E} \big(| f (W_T^{\tau}) |^p \, | \, T > \tau \big)^{1 / p} + c (T)\\
     = & \mathbb{E} \big(\psi (W_T^{\tau}) \, | \, T > \tau \big)^{1 / p} + c (T)
    =\mathbb{E} (\psi (W_T^{\tau}))^{1 / p} + c (T)\\
     = & \mathbb{E} (\psi_{\eta} (W_T^{\tau}))^{1 / p} + c (\eta) + c (T),
  \end{align*}
  where we have used that $W_T^{\tau} \in \partial \mathbb{D}$ when $T >
  \tau$. Similarly, since $\mathbb{P} (\tau_{\varepsilon} > T) = c (T)$ and $1
  - \varepsilon \leqslant X_T^{\tau_{\varepsilon}} \leqslant 1$ for $T >
  \tau_{\varepsilon}$, we have
  \begin{align*}\MoveEqLeft
    \big\| f (X_T^{\tau_{\varepsilon}}) \big\|_p  =  \mathbb{E} 
    \big(| f(X_T^{\tau_{\varepsilon}}\big) |^p\big)^{1 / p} 
    =\mathbb{E} \big(|f(X_T^{\tau_{\varepsilon}}) |^p \,| \, T > \tau_{\varepsilon}\big)^{1 / p} + c (T)\\
     = & \mathbb{E} \big(\psi (X_T^{\varepsilon})\, |\, T > \tau_{\varepsilon})^{1 /
    p} + c (\varepsilon) + c (T)
     =  \mathbb{E} (\psi (X_T^{\varepsilon})\big)^{1 / p} + c (\varepsilon) + c
    (T)\\
     = & \mathbb{E} (\psi_{\eta} (X_T^{\varepsilon}))^{1 / p} + c (\eta) + c
    (\varepsilon) + c (T).
  \end{align*}
  It follows,
  \begin{align*}\MoveEqLeft
    | B |  \leqslant  \big| \mathbb{E} (\psi_{\eta} (W_T^{\tau}))^{1 / p}
    -\mathbb{E} (\psi_{\eta} (X_T^{\varepsilon}))^{1 / p} \big| + c (\eta) + c
    (\varepsilon) + c (T)\\
     = & c_{\eta, T} (\varepsilon) + c (\eta) + c (\varepsilon) + c (T),
  \end{align*}
  where we used Lemma \ref{L: weak convergence} for the second line. Finally
  \[ 
    \big\| f (W^{\tau}_{\infty}) \big\|_p - \big\| M_T^f \big\|_p  =  A + B + C = c_{\eta,
    T} (\varepsilon) + c (\eta) + c (\varepsilon) + c (T) .
  \]
  Fix any small $\eta > 0$, choose $T > 0$ large enough so that $c (T)
  \leqslant \eta$, then $\varepsilon > 0$ small enough so that $c_{\eta, T}
  (\varepsilon) + c (\varepsilon) \leqslant \eta$. Hence
  \[ \lim_{T \rightarrow \infty} \lim_{\varepsilon \rightarrow 0} 
  \big| \big\| f
     (W^{\tau}_{\infty}) \big\|_p - \big\| M_T^f \big\|_p \big| \leqslant c (\eta), \]
  therefore
  $\lim_{T \rightarrow \infty} \lim_{\varepsilon \rightarrow 0} \| M_T^f
     \|_p = \| f (W^{\tau}_{\infty}) \|_p $ 
  as desired. This concludes the proof of Theorem \ref{T: convergence}.
\end{proof*}

\ 

\begin{proof*}{Proof of Theorem \ref{theorem_upper}}
  This is now a direct consequence of Theorem \ref{T: convergence}. Let $f \in
  L^p (\partial \mathbb{D})$. Its $L^p$ norm is directly related to the
  stochastic $L^p$ norm
  \[ \big\| f (W_{\infty}^{\tau}) \big\|_p \assign \big(\mathbb{E} | f (W_{\infty}^{\tau})
     |_X^p\big)^{1 / p} = \Big( \int_{\partial \mathbb{D}} | f (z) |_X^p 
     \frac{\mathd z}{2 \pi} \Big)^{1 / p} = \frac{1}{(2 \pi)^{1 / p}}  \| f
     \|_{L^p (\partial \mathbb{D})}, \]
  and the same relation holds for the smooth function $g \assign \mathcal{H}
  f$. From Theorem \ref{T: convergence} we know
  \[ \big\| f (W_{\infty}^{\tau}) \big\|_p = \lim_{T \rightarrow \infty}
     \lim_{\varepsilon \rightarrow 0} \big\| M_T^f \big\|_p, \qquad \big\| g
     (W_{\infty}^{\tau}) \big\|_p = \lim_{T \rightarrow \infty} \lim_{\varepsilon
     \rightarrow 0} \big\| M_T^g \big\|_p, \]
  and from Lemma \ref{L: Lp estimate for Mng} we know that $\| M_T^g \|_p \leqslant \| \Sha \|_{L^p_X \rightarrow L^p_X}  \| M_T^f
     \|_p $  for all $T > 0$,
  $\varepsilon > 0$.
  It follows that for any $f \in L^p_X (\partial \mathbb{D})$,
  \[ \| g \|_{L^p_X (\partial \mathbb{D})} = \| \mathcal{H} f \|_{L^p_X (\partial
     \mathbb{D})} \leqslant \| \Sha \|_{L^p_X \rightarrow L^p_X} \| f \|_{L^p_X
     (\partial \mathbb{D})}, \]
  that is $\| \mathcal{H} \|_{L^p_X \rightarrow L^p_X} \leqslant \| \Sha \|_{L^p_X
  \rightarrow L^p_X}$.
\end{proof*}

\section{Proof of the Lower Bound (Theorem \ref{theorem_lower})}
In this section, we will prove Theorem \ref{theorem_lower}.

\paragraph{Representation using angles adapted to $\Sha$}
Our starting point is the representation of a function $f$ with left and right tosses as in section \ref{S: Tuning}
\[ 
	f	 =	 \langle f \rangle_{I_0} + \mathd_0 f\ \varepsilon_0
			 + \sum_{k = 1}^{\infty} \sum_{\sigma=\pm}
		\mathd_k f^\sigma(\varepsilon_0,\varepsilon^-_{1},\varepsilon^+_{1},\ldots,\varepsilon^-_{k-1},\varepsilon^+_{k-1})\ \varepsilon^\sigma_k.
\]
A direct connection between $\Sha$ and $\mathcal{H}$ will require that we work with
two random generators in parallel, namely
\[
	\varphi^-(\theta) = \tmop{sqsin(\theta)} \assign \tmop{sign}(\tmop{sin}(\theta)),
	\quad \varphi^+(\theta) = \tmop{sqcos(\theta)} \assign \tmop{sign}(\tmop{cos}(\theta)),
\]
both defined on the same probability space $(\mathbb{T},\frac{\mathd\theta}{2\pi})$.
For $k=0$, we replace $\varepsilon_0$ by $\varphi^+ (\theta_0)$.
For $k\geqslant 1$ we replace $\varepsilon^{\pm}_{k}$ by $\varphi^{\pm}(\theta_k)$. We can now define
\[
	F_n(\vec{\theta}_{n-1})
	= \langle f \rangle_{I_0}
		+ \mathd_0 F\ \varphi^+(\theta_0)
		+ \sum_{k=1}^{n-1} \big[\mathd_k F^-(\vec{\theta}_{k-1}) \ \varphi^-(\theta_k)
			+ \mathd_k F^+(\vec{\theta}_{k-1}) \ \varphi^+(\theta_k)\big]
\]
with $\mathd F_0 = \mathd f_0$, and for $k \geqslant 1$
\[ \mathd F_k^{\pm}  (\vec{\theta}_{k-1}) = \mathd f_k^{\pm} 
   (\varphi^+ (\theta_0), \varphi^- (\theta_1), \varphi^+ (\theta_1), \ldots,
   \varphi^- (\theta_{k-1}), \varphi^+ (\theta_{k-1})) . \]
This is a martingale sequence in the obvious filtered probability space and the probability distributions of $f$ and $F$ are the same.
We refer to Figure \ref{Fig: coding angles complex} for an illustration of the trigonometric tosses adapted to $\Sha$.

\begin{figure}[ht]
\centering
\includegraphics[width=8cm]{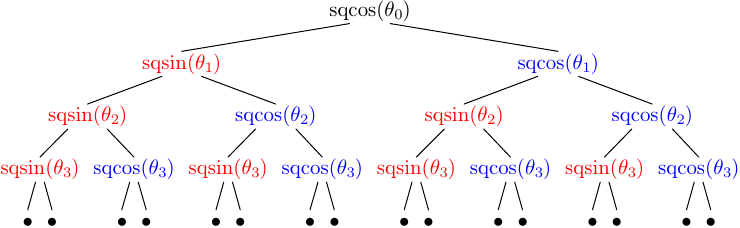}
\caption{Trigonometric tosses adapted to $\Sha$}
\label{Fig: coding angles complex}
\end{figure}

We will use expectation operators to denote integrals against probability measures.
For a function $f = f(x)$, with $x\in I_0=[0,1)$, and for a function $F=F(\vec{\theta})=F(\theta_0,\ldots,\theta_n)$, we note 
\[ \mathbb{E}^x f \assign \int_0^1 f(x) \mathd x, \quad 
\mathbb{E}^{\vec{\theta}} F\assign\int_0^{2\pi}\ldots\int_0^{2\pi}  F(\theta_0,\ldots,\theta_n)  \frac{\mathd\theta_0}{2\pi}\ldots \frac{\mathd\theta_n}{2\pi}.
\]
\begin{remark}\label{x-and-theta}
Using this notation, we have for the functions $F(\vec{\theta})$ and $f(x)$ related like above
$ 
\mathbb{E}^x f= \mathbb{E}^{\vec{\theta}} F.
$ 
\end{remark}

\paragraph{Comparing $\mathcal{H}$ and $\Sha$ in a projected form}

The operator $\Sha$ understood in the language of sign tosses maps as follows:
\[ \Sha : \mathd f^{\pm}_k  (\varepsilon_{0,} \varepsilon^+_1,
   \varepsilon^-_1, \ldots, \varepsilon^-_k) \varepsilon^{\pm}_{k + 1} \mapsto
   \pm \mathd f^{\pm}_k  (\varepsilon^+_1, \varepsilon^-_1, \ldots,
   \varepsilon^-_k) \varepsilon_{k + 1}^{\mp} . \]
and note that in the language involving angles we have $\Sha
\varphi^\pm = \pm \varphi^\mp$, that
is
$\Sha \varphi^{\sigma} = \sigma \varphi^{\bar{\sigma}}$
for $\sigma = \pm$ and $\bar{\sigma}$ the opposing sign.

Let us also define the projection $\mathcal{P}$ for periodic functions $f$ defined on $\mathbb{T}=[-\pi,\pi)$
by $(\mathcal{P} f) (x) = \sum^1_{i = - 2} \langle f \rangle_{A_i} \tmmathbf{1}_{A_i}(x)$, where the $4$ arcs
$A_i = [i \pi / 2, i \pi / 2 + \pi / 2)$ with $i\in\{ -1,0,1,2 \}$ form a partition of $\mathbb{T}$.

\begin{lemma}
  \label{lemma-projection-hilbert}There exists $c_0 > 0$ such that for
  signatures $\sigma=\pm 1$ there holds
  \[ \mathcal{P} \mathcal{H} \varphi^{\sigma} = c_0 \Sha \varphi^{\sigma} .
  \]
\end{lemma}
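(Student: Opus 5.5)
We compute $\mathcal{H}\varphi^\sigma$ explicitly via Fourier series, average it over the four arcs $A_i$, and compare with $\Sha\varphi^\sigma=\sigma\varphi^{\bar\sigma}$, i.e. $\Sha\varphi^+=\varphi^-=\tmop{sqsin}$ and $\Sha\varphi^-=-\varphi^+=-\tmop{sqcos}$.

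\emph{Fourier series.} The square waves have the classical expansions
\[
\tmop{sqsin}(\theta)=\frac4\pi\sum_{j\ge0}\frac{\sin((2j+1)\theta)}{2j+1},\qquad
\tmop{sqcos}(\theta)=\frac4\pi\sum_{j\ge0}(-1)^j\frac{\cos((2j+1)\theta)}{2j+1}.
\]
The conjugate function on $\mathbb{T}$ with the $\cot$ kernel maps $\cos(m\theta)\mapsto\sin(m\theta)$ and $\sin(m\theta)\mapsto-\cos(m\theta)$ for $m\ge1$. Hence
\[
\mathcal{H}\tmop{sqcos}=\frac4\pi\sum_j(-1)^j\frac{\sin((2j+1)\theta)}{2j+1},\qquad
\mathcal{H}\tmop{sqsin}=-\frac4\pi\sum_j\frac{\cos((2j+1)\theta)}{2j+1}.
\]
Alternatively one can use the closed forms $\frac{2}{\pi}\log|\tan(\theta/2+\pi/4)|$ and similar logarithms, but the series form is better suited to averaging.

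\emph{Averaging over the arcs.} Averaging a mode $\sin(m\theta)$ over $A_i=[i\pi/2,(i+1)\pi/2)$ gives
\[
\frac{2}{\pi m}\bigl(\cos(mi\pi/2)-\cos(m(i+1)\pi/2)\bigr).
\]
For odd $m=2j+1$ we have $\cos(m i\pi/2)=0$ when $i$ is odd, and $\cos(mi\pi/2)=\pm1$ when $i$ is even. So each average is $\pm\frac{2}{\pi m}$, with the sign pattern across the four arcs equal to $(-1)^j$ times the sign pattern of $\tmop{sqsin}$ on the arcs. The factor $(-1)^j$ comes from $\cos(m\pi)=-1$ together with $\cos(mi\pi/2)$ for $i=\pm1,2$.

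For $\mathcal{H}\tmop{sqcos}$ this factor cancels the $(-1)^j$ already present in its coefficients, so
\[
\mathcal{P}\mathcal{H}\tmop{sqcos}=\Bigl(\frac{8}{\pi^2}\sum_{j\ge0}\frac{1}{(2j+1)^2}\Bigr)\tmop{sqsin}\ ?
\]
This must be checked carefully, since the target constant involves Catalan's constant $G=\sum_j(-1)^j/(2j+1)^2$ rather than $\pi^2/8$. The $\theta$-dependence of the averaged mode must therefore be computed more carefully. The average of $\sin(m\theta)$ over $A_0$ is $\frac{2}{\pi m}(1-\cos(m\pi/2))=\frac{2}{\pi m}$, with no sign alternation. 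On $A_1$ it is $\frac{2}{\pi m}(0-\cos m\pi)=\frac{2}{\pi m}$, and similarly on the other arcs. So the projected mode is exactly $\frac{2}{\pi m}\tmop{sqsin}$ for every odd $m$. Then
\[
\mathcal{P}\mathcal{H}\tmop{sqcos}=\frac{8}{\pi^2}\Bigl(\sum_j\frac{(-1)^j}{(2j+1)^2}\Bigr)\tmop{sqsin}=\frac{8G}{\pi^2}\tmop{sqsin},
\]
which gives $c_0=8G/\pi^2$ as stated.

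\emph{The other signature and rigour.} For $\sigma=-$ we either repeat the computation with $\cos(m\theta)$ averaged over the arcs, or use the rotation $\theta\mapsto\theta+\pi/2$. This rotation maps $\tmop{sqcos}$ to $-\tmop{sqsin}$, commutes with $\mathcal{H}$, and permutes the arcs, hence commutes with $\mathcal{P}$. It yields $\mathcal{P}\mathcal{H}\tmop{sqsin}=-c_0\tmop{sqcos}=c_0\Sha\varphi^-$.

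The main obstacle is the sign bookkeeping in these arc averages, since an error there would produce $\pi^2/8$ instead of $G$. The one analytic point is exchanging the average with the series, which is justified because $\mathcal{H}$ is bounded on $L^2$, the series converge in $L^2$, and averaging over an arc is a continuous functional.
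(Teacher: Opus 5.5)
Your final computation is correct and proves the lemma with the exact constant, but by a different route than the paper. One editorial point first. The first version of your averaging paragraph is false and should be deleted: it asserts an extra factor $(-1)^j$ and displays a formula with a question mark. What carries the proof is the corrected statement. For every odd $m$, the averages of $\sin(m\theta)$ over $A_0,A_1,A_2,A_{-1}$ are $\frac{2}{\pi m}(1,1,-1,-1)$, so $\mathcal{P}\sin(m\,\cdot)=\frac{2}{\pi m}\varphi^-$.

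Combined with the coefficients $\frac{4}{\pi}\frac{(-1)^j}{2j+1}$ of $\mathcal{H}\varphi^+$, this gives $c_0=\frac{8}{\pi^2}\sum_j\frac{(-1)^j}{(2j+1)^2}=8G/\pi^2>0$. The rotation $\theta\mapsto\theta+\pi/2$ commutes with $\mathcal{H}$ and permutes the arcs, so it correctly yields $\mathcal{P}\mathcal{H}\varphi^-=-c_0\varphi^+=c_0\Sha\varphi^-$. Your $L^2$ justification of termwise averaging is sufficient.

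The paper never expands in Fourier series. It sets $g=\mathcal{H}\mathbf{1}_{[-\pi/2,\pi/2)}$ and writes
\[
\mathcal{H}\varphi^+(\theta)=g(\theta)-g(\theta+\pi), \qquad \mathcal{H}\varphi^-(\theta)=g(\theta-\pi/2)-g(\theta+\pi/2).
\]
It then uses three qualitative facts to conclude that the four arc averages have the pattern $c_0(1,1,-1,-1)$ with $c_0>0$: parity, reflection symmetry about $\pm\pi/2$, and the sign of $\mathcal{H}\varphi^+$ on $(0,\pi)$. The value of $c_0$ is obtained separately, in the section computing the constant. There the paper uses the closed form $\mathcal{H}\varphi^-(x)=\frac{2}{\pi}\log\tan\frac{x}{2}$ on $(0,\pi/2)$ and the integral $\int_0^{\pi/4}\log\tan u\,\mathd u=-G$.

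The paper's argument shows which qualitative features of $\mathcal{H}\varphi^\sigma$ force the projected identity, independently of the value of the constant. Yours is purely computational but self-contained. It checks the sign pattern mode by mode and produces both the identity and the exact value $8G/\pi^2$ at once. Catalan's constant appears directly through its defining series, and no principal-value integrals are needed.
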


\begin{proof} 
	We start by identifying some elementary symmetries of $\varphi^\pm$, namely
$\varphi^\pm(x+\pi)=-\varphi^\pm(x) 
    $ and $ \varphi^\pm(x+\pi/2)=\mp\varphi^\mp(x).$  
We see that $\varphi^+$ is even and antisymmetric about $\pm\pi/2$ while $\varphi^-$ is odd and symmetric about $\pm\pi/2$. Let $\alpha(\theta)=\tmmathbf{1}_{[- \pi / 2, \pi / 2)}(\theta)$. Since $\alpha$ is even, $g=\mathcal{H}\alpha$ is odd. So $g$ has zeros at $0,\pm \pi$. One can check by inspecting the integral representation that for the singularities of $g$ there holds $\lim_{\theta\to \pm \pi/2} g(\theta)=\pm \infty$. We further have the odd function $\mathcal{H}\varphi^+$ and the even function $\mathcal{H}\varphi^-$. There hold
$ 
\mathcal{H}\varphi^+(\theta)=g(\theta)-g(\theta+\pi),\quad \mathcal{H}\varphi^-(\theta)=g(\theta-\pi/2)-g(\theta+\pi/2).
$ 
Thus we deduce that $\mathcal{H} \varphi^+ |_{(0, \pi)
  \setminus \{\pi / 2\}}$ is positive and symmetric about the axis $x = \pi /
  2$ and that $\mathcal{H} \varphi^+ |_{(- \pi, 0) \setminus \{- \pi / 2\}}$
  is negative and symmetric about $x = - \pi / 2$.
  Gathering the information,
  we obtain
  \[ \mathcal{P} \mathcal{H} \varphi^+ = c_0 \varphi^- = c_0 \Sha \varphi^+,
     \qquad \mathcal{P} \mathcal{H} \varphi^- = - c_0 \varphi^+ = c_0
     \Sha \varphi^- \]
  for some $c_0 > 0$. We refer to Section \ref{S: Constant_c0} for the calculation and value of the constant $c_0$.
  \end{proof}

\begin{remark} \label{flattening-hilbert}
Notice the elementary fact for functions $f, g \in L^2$ there holds
$  (\mathcal{P} f, g)_{L^2} = (\mathcal{P} f, \mathcal{P} g)_{L^2} = (f, \mathcal{P} g)_{L^2} . $ 
\end{remark}

\paragraph{$\mathcal{H}$ as a martingale transform. High frequency modulation}
We will not compare $\Sha$ with $\mathcal{H}$ directly, but rather $\Sha$
with a martingale transform version of $\mathcal{H}$. As observed by {\tmname{Bourgain}},
this is possible thanks to a succession of high frequency modulations. Given $F(\vec{\theta})=\lim_{n\to\infty} F(\vec{\theta}_n)$
we introduce $F^{\mathcal{H}}$,
\[
	F^{\mathcal{H}}(\vec{\theta})
	= \sum_{k=1}^{\infty}\sum_{\sigma=\pm} \mathd_k F^\sigma(\vec{\theta}_{k-1})
			\ (\mathcal{H}\varphi^\sigma)(\theta_k),
\]
where the first two terms of $F(\vec{\theta})$ are dropped and the Hilbert transform
acts on the last random generator of each summand.

We will require the following $L^p$ estimates. 

\begin{lemma}
  \label{lemma-inside-H}{\tmdummy}
  There holds
  \[
  	\big| \mathbb{E}^{\vec{\theta}}
		\big\langle
			F^{\mathcal{H}}(\vec{\theta}) , G(\vec{\theta})
		\big\rangle_{X, X^{\ast}}
	\big|
	\leqslant
	h_p \| F \|_{L_X^p} \| G \|_{L_{X^{\ast}}^q}
  \]

\end{lemma}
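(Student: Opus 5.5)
The plan is to transfer the problem to a single variable by Bourgain's high frequency modulation. After modulation, the action of $\mathcal{H}$ on the last random generator of each summand becomes the action of $\mathcal{H}$ on one variable $t$. Then $h_p$ enters through duality.

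\textbf{Reductions.} By density we may assume $F=F_n$ is a finite martingale depending on $\theta_0,\dots,\theta_{n-1}$. Each $\mathd_k F^\sigma$ is then a finite sum of $X$-vectors times indicators built from $\varphi^\pm(\theta_j)$, $j<k$. By density in $L^q_{X^*}$ we may also assume $G$ depends only on $\theta_0,\dots,\theta_{n-1}$ and takes values in a finite-dimensional subspace.

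Since $\varphi^\pm$ are discontinuous, we next replace each generator by its Fourier partial sum $S_M\varphi^\sigma$ of degree $M$. This is a mean-zero trigonometric polynomial. Because $\mathcal{H}$ is bounded on scalar $L^r$ for $1<r<\infty$, we have $S_M\varphi^\sigma\to\varphi^\sigma$ and $\mathcal{H}S_M\varphi^\sigma\to\mathcal{H}\varphi^\sigma$ in every $L^r$. Only finitely many terms and finitely many vectors are involved. Hence both $F$ and $F^{\mathcal{H}}$ are approximated in $L^p_X$ by their trigonometric versions $F_M$ and $F_M^{\mathcal{H}}$, and it suffices to prove the estimate for these.

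\textbf{Modulation.} Freeze $\theta_0$ and choose integers $1\le N_1<N_2<\dots<N_{n-1}$ with $N_k>2M(N_1+\dots+N_{k-1})$, the right-hand side being $0$ for $k=1$. For $t\in\mathbb{T}$ set $\vec\theta(t)=(\theta_0,\theta_1+N_1t,\dots,\theta_{n-1}+N_{n-1}t)$.

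In the $k$-th summand, $\mathd_kF_M^\sigma(\vec\theta_{k-1}(t))$ is a trigonometric polynomial in $t$ with frequencies in $[-M(N_1+\dots+N_{k-1}),\,M(N_1+\dots+N_{k-1})]$. The factor $S_M\varphi^\sigma(\theta_k+N_kt)$ has spectrum in $N_k\cdot([-M,M]\setminus\{0\})$. By the choice of $N_k$, every frequency of the product has the same sign as the frequency coming from the second factor. The Hilbert transform acts as the multiplier $-\mathi\,\tmop{sign}(\xi)$, so
\[
\mathcal{H}_t\big[a(t)\,b(\theta_k+N_kt)\big]=a(t)\,(\mathcal{H}b)(\theta_k+N_kt).
\]
The terms $\langle f\rangle_{I_0}$ and $\mathd_0F\,\varphi^+(\theta_0)$ do not depend on $t$, so $\mathcal{H}_t$ kills them. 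This gives the exact identity
\[
F_M^{\mathcal{H}}(\vec\theta(t))=\mathcal{H}_t\big[F_M(\vec\theta(t))\big].
\]
Freezing $\theta_0$ is exactly what avoids the problem of the two dropped terms. Subtracting them would otherwise cost a factor $2$.

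\textbf{Duality and averaging.} Lebesgue measure on $\mathbb{T}^n$ is invariant under $\vec\theta\mapsto\vec\theta(t)$ for each fixed $t$. Averaging over $t$ therefore gives
\[
\mathbb{E}^{\vec\theta}\langle F^{\mathcal{H}}_M,G\rangle
=\mathbb{E}^{\vec\theta}\int_{\mathbb{T}}\big\langle \mathcal{H}_t[F_M(\vec\theta(t))],G(\vec\theta(t))\big\rangle\frac{\mathd t}{2\pi}.
\]
For fixed $\vec\theta$, the inner integral is bounded by $h_p\,\|F_M(\vec\theta(\cdot))\|_{L^p_X(\mathbb{T})}\|G(\vec\theta(\cdot))\|_{L^q_{X^*}(\mathbb{T})}$. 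Applying Hölder in $\vec\theta$ and the same invariance in reverse yields $h_p\|F_M\|_{L^p_X}\|G\|_{L^q_{X^*}}$. Letting $M\to\infty$ completes the argument.

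\textbf{Main obstacle.} The delicate step is the approximation. One needs the spectral separation to be exact, so that the identity above holds without error terms, while still controlling $F^{\mathcal{H}}$, which contains the unbounded functions $\mathcal{H}\varphi^\sigma$ (they have logarithmic singularities). I would handle this exactly as above: truncate first to trigonometric polynomials, where the identity is exact, and only then pass to the limit using the scalar $L^r$ boundedness of $\mathcal{H}$ on the finitely many generators involved.
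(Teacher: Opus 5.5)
Your proof is correct and is essentially the paper's argument: Bourgain's high-frequency modulation with a rapidly growing integer sequence, so that $\mathcal{H}_t$ acts exactly on the last generator of each summand, followed by averaging over the modulation parameter via translation invariance of Haar measure on $\mathbb{T}^n$ and duality with $h_p$. The differences are minor, and one of them is an improvement.

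Truncating the generators $\varphi^\sigma$ themselves is unnecessary. The paper only assumes finite spectrum for the coefficients $\mathd_k F^\sigma$, since $l_k\neq 0$ already fixes the sign of every frequency. Also, if $S_M\varphi^+$ and $S_M\varphi^-$ of the same $\theta_j$ get multiplied, your degree bound becomes $2M(N_1+\dots+N_{k-1})$, but your choice of $N_k$ still covers this.

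Leaving $\theta_0$ unmodulated is a genuine small improvement. With the paper's choice $n_0=1$, the transform $\mathcal{H}_\psi$ also produces the extra term $\mathd_0 F\,(\mathcal{H}\varphi^+)(\theta_0+\psi)$. This is harmless in the paper because the lemma is applied with $\langle f\rangle_{I_0}=0$ and $(f,h_{I_0})=0$, whereas your version needs no such restriction.
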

Notice that at difference with previous works, we are using a weak formulation for the $L^p$ estimate. This is a crucial feature of our analysis. It will allow  us later to make the link between the fundamental properties shared by $\mathcal{H}$ and $\Sha$ and the desired linear $L^p$ estimates.

\ 

\begin{proof*}{Proof of Lemma \ref{lemma-inside-H}}
Given $k\geqslant 1$, introduce the Fourier transform of the functions $\mathd_k F^\sigma(\vec{\theta}_{k-1})$,
with $\vec{\theta}_{k-1}=(\theta_0,\ldots,\theta_{k-1}) \in \mathbb{T}^k$ and $\varphi^\sigma(\theta_k)$ with $\theta_k\in\mathbb{T}$, namely
\[
	\mathd_k F^\sigma(\vec{\theta}_{k-1})
		= \sum_{\vec{l}_{k-1}\in\mathbb{Z}^k}
			y^\sigma_{\vec{l}_{k-1}} e^{\mathi\vec{l}_{k-1}\cdot\vec{\theta}_{k-1}},
	\quad
	\varphi^\sigma(\theta_k)
		= \sum_{l_k\in\mathbb{Z}\setminus \{0\} }
			c^\sigma_{l_k} e^{\mathi l_k \theta_k},
\]
where $\vec{l}_{k-1}=(l_0,\ldots,l_{k-1}) \in \mathbb{Z}^k$. The coefficients $y^\sigma_{\vec{l}_{k-1}}$ are Banach space valued, and the coefficients $c^\sigma_{l_k}$ are scalars.
We can exclude $l_k=0$ in the second sum since $\varphi^\sigma$, with $\sigma=\pm$, have average zero in $\mathbb{T}$.
By a standard approximation argument, we can further assume a finite spectrum for all terms of the form $\mathd_k F^\sigma(\vec{\theta}_{k-1})$. That is, there exists a sequence
of integers $\vec{N}=(N_k)_{k\geqslant 0}$ so that the above Fourier series are restricted to $\vec{l}_{k-1}\in\mathbb{Z}^k,\|\vec{l}_{k-1}\|_1\leqslant N_{k-1}$.

Introduce below a new variable $\psi\in\mathbb{T}$, facilitating the action of the Hilbert transform together with an increasing sequence of positive integers adapted to $\vec{N}$, governing the high frequency modulation. 
We claim

\begin{lemma}\label{moveH}
Given a sequence of spectral bounds $\vec{N}$ as above, there exists a sequence of high frequency modulations $\vec{n}=\vec{n}(\vec{N})$, such that for all $k\geqslant 1$, we have
\[ 
	\mathcal{H}_{\psi}  \big(\mathd F^{\sigma}_{k-1} (\vec{\theta}_{k-1} + \vec{n}_{k-1} \psi) \varphi^{\sigma} (\theta_{k} + n_{k} \psi)\big)
	=
	\mathd F^{\sigma}_k  (\vec{\theta}_{k-1} + \vec{n}_{k-1} \psi)  (\mathcal{H}\varphi^{\sigma})  (\theta_{k} + n_{k} \psi),
\]
where $\mathcal{H}_{\psi}$ denotes the Hilbert transform in the variable $\psi$.
\end{lemma}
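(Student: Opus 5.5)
The plan is to expand both factors in Fourier series and check that, once the frequencies $n_k$ grow fast enough, every exponential in the product has the same sign of frequency in $\psi$ as the factor coming from $\varphi^\sigma$. The Hilbert transform in $\psi$ will then act only on that factor.

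Using the finite-spectrum expansions, the product reads
\[
\mathd F^{\sigma}_k(\vec{\theta}_{k-1}+\vec{n}_{k-1}\psi)\,\varphi^{\sigma}(\theta_k+n_k\psi)
=\sum_{\|\vec{l}_{k-1}\|_1\leqslant N_{k-1}}\ \sum_{l_k\neq 0} y^\sigma_{\vec{l}_{k-1}} c^\sigma_{l_k}\, e^{\mathi(\vec{l}_{k-1}\cdot\vec{\theta}_{k-1}+l_k\theta_k)}\, e^{\mathi(\vec{l}_{k-1}\cdot\vec{n}_{k-1}+l_k n_k)\psi}.
\]
Here $\vec{l}_{k-1}\cdot\vec{n}_{k-1}=\sum_{j<k} l_j n_j$. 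As a function of $\psi$ this is a sum of characters, and $\mathcal{H}_\psi$ multiplies $e^{\mathi m\psi}$ by $-\mathi\,\tmop{sign}(m)$, with the convention matching $\mathcal{H}_{\mathbb{T}}$ as normalized earlier. Likewise $\mathcal{H}\varphi^\sigma=\sum_{l_k\neq0}-\mathi\,\tmop{sign}(l_k)c^\sigma_{l_k}e^{\mathi l_k\theta}$.

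The claimed identity therefore holds termwise provided
\[
\tmop{sign}\big(\vec{l}_{k-1}\cdot\vec{n}_{k-1}+l_k n_k\big)=\tmop{sign}(l_k)
\]
for all $l_k\neq 0$ and all $\|\vec{l}_{k-1}\|_1\leqslant N_{k-1}$. It suffices that $|l_k|n_k\geqslant n_k>|\vec{l}_{k-1}\cdot\vec{n}_{k-1}|$. The right-hand side is at most $N_{k-1}\max_{j<k}n_j=N_{k-1}n_{k-1}$ once $\vec n$ is increasing. So I will choose $n_0=1$ and recursively $n_k> N_{k-1}n_{k-1}$, for instance $n_k=N_{k-1}n_{k-1}+1$. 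This depends only on $\vec{N}$, as required.

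The main obstacle is that $\varphi^\sigma$ has infinite spectrum. Only the spectrum of $\mathd F_k^\sigma$ is truncated, and the bound $|l_k|\geqslant1$ settles the sign question uniformly in $l_k$. Convergence is handled by linearity and $L^2$-boundedness of $\mathcal{H}_\psi$. For fixed $\vec\theta$, the left side is a finite sum over $\vec l_{k-1}$ of characters times $\varphi^\sigma(\theta_k+n_k\psi)$, which is an $L^2(\mathbb{T})$ series in $\psi$. $\mathcal{H}_\psi$ acts on it by its $L^2$ Fourier multiplier, so applying the sign identity frequency by frequency is legitimate. The resulting series is exactly the expansion of $\mathd F^\sigma_k(\vec\theta_{k-1}+\vec n_{k-1}\psi)(\mathcal{H}\varphi^\sigma)(\theta_k+n_k\psi)$. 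This gives the identity for a.e. $\psi$, and by Fubini for a.e. $\vec\theta$. The indexing $\mathd F_{k-1}$ versus $\mathd F_k$ in the statement is treated as the same object $\mathd F_k^\sigma(\vec\theta_{k-1})$.
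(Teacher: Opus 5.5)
Your proposal is correct and follows the paper's proof: both expand in Fourier series and choose the modulations recursively so that $|\vec l_{k-1}\cdot\vec n_{k-1}|\leqslant N_{k-1}n_{k-1}<n_k\leqslant |l_k|n_k$, so that $\mathcal{H}_\psi$ sees only the sign of $l_k$ and factors onto $\varphi^\sigma$ (the paper takes $n_k=2N_{k-1}n_{k-1}$, you take $n_k=N_{k-1}n_{k-1}+1$). Your justification of the infinite spectrum of $\varphi^\sigma$ through the $L^2$ multiplier is a small point of rigor the paper leaves implicit, as is the harmless assumption $N_{k-1}\geqslant 1$ that both versions need for $\vec n$ to be increasing.
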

\begin{proof}
For all $k\geqslant 1$, set 
$   
\mathcal{Z}_{k-1}\assign\{ \vec{l}_{k-1}\in\mathbb{Z}^{k}; \|\vec{l}_{k-1}\|_1\leqslant N_{k-1}\}
$  
so
\begin{align*}
\MoveEqLeft
 \mathcal{H}_{\psi}\big(\mathd F^{\sigma}_k  (\vec{\theta}_{k-1} + \vec{n}_{k-1} \psi)\varphi^{\sigma} (\theta_{k} + n_{k} \psi)\big)\\
=&\mathcal{H}_{\psi}
\sum_{\vec{l}_{k-1}\in\mathcal{Z}_{k-1}}
\sum_{l_k\in \mathbb{Z}\setminus \{0\} }
			y^\sigma_{\vec{l}_{k-1}}
			c^\sigma_{l_k}
			 e^{\mathi\vec{l}_{k-1}\cdot(\vec{\theta}_{k-1}+\vec{n}_{k} \psi)}
			e^{\mathi{l}_{k}\cdot(\theta_k+{n}_{k} \psi)}\\
=& - \mathi
\sum_{\vec{l}_{k-1}\in\mathcal{Z}_{k-1}}
\sum_{l_k\in \mathbb{Z}\setminus \{0\} }
			\tmop{sign}(\vec{l}_{k-1}\cdot\vec{n}_{k-1}+l_k n_k)\ 
			y^\sigma_{\vec{l}_{k-1}}
			c^\sigma_{l_k}\\
            &\qquad \qquad
			 \cdot e^{\mathi \vec{l}_{k-1}\cdot(\vec{\theta_{k-1}}+\vec{n}_{k-1}\psi)}
			e^{\mathi l_k (\theta_k+n_k \psi)} \backassign    \mathcal{H}_{\psi}(\star).     
\end{align*}

Choosing the increasing sequence $\vec{n}$ such that
$ 
n_0 = 1,\quad  n_k =  2 N_{k-1} n_{k-1},\ \forall k\geqslant 1,
$ 
and recalling that $l_k\neq 0$ allows us to estimate for all $k\geqslant 1$
\[ |\vec{l}_{k-1}\cdot\vec{n}_{k-1}|\leqslant \|\vec{l}_{k-1}\|_1\|\vec{n}_{k-1}\|_{\infty}\leqslant N_{k-1} n_{k-1}\leqslant \frac12 n_k< |l_k n_k|.
\] 
Hence $\tmop{sign}(\vec{l}_{k-1}\cdot\vec{n}_{k-1}+l_k n_k)=\tmop{sign}(l_k n_k)$ yielding
\begin{align*}
\MoveEqLeft
\mathcal{H}_{\psi}(\star)
=
- \mathi
\sum_{\vec{l}_{k-1}\in\mathcal{Z}_{k-1}}
\sum_{l_k\in \mathbb{Z}\setminus \{0\} }
			\tmop{sign}(l_k n_k)\ 
			y^\sigma_{\vec{l}_{k-1}}
			c^\sigma_{l_k}
			 \cdot e^{\mathi \vec{l}_{k-1}\cdot(\vec{\theta}_{k-1}+\vec{n}_{k-1}\psi)}
			e^{\mathi l_k (\theta_k+n_k \psi)}\\
&=   
- \mathi
\sum_{\vec{l}_{k-1}\in\mathcal{Z}_{k-1}}
y^\sigma_{\vec{l}_{k-1}} e^{\mathi \vec{l}_{k-1}\cdot(\vec{\theta}_{k-1}+\vec{n}_{k-1}\psi)}
\sum_{l_k\in \mathbb{Z}\setminus \{0\} }
\tmop{sign}(l_k n_k) c^\sigma_{l_k}e^{\mathi l_k (\theta_k+n_k \psi)}\\
&=
\mathd F^{\sigma}_k  (\vec{\theta}_{k-1} + \vec{n}_{k-1} \psi)
(\mathcal{H}\varphi^{\sigma}) (\theta_{k} + n_{k} \psi).
\end{align*}
\end{proof}
Coming back to the proof of Lemma \ref{lemma-inside-H},
for ease of notation, we write
\[
\Phi_{\vec\theta, \vec N} (\psi) = F (\vec{\theta} + \vec{n} \psi),
\quad \Phi^{\mathcal{H}}_{\vec\theta, \vec N} (\psi) = F^{\mathcal{H}} (\vec{\theta} + \vec{n} \psi),
\quad \Gamma_{\vec\theta, \vec N} (\psi) = G (\vec{\theta} + \vec{n} \psi).
\]
For a function  $F = F(\psi)$ with $\psi\in\mathbb{T}=[0,2\pi)$, we note
$ 
\mathbb{E}^{\psi} F \assign  \int_0^{2\pi} F(\psi) \frac{\mathd \psi}{2\pi} .
$ 
\begin{remark}\label{theta-and-psi}
  There hold, if $\mathcal{T}$ denotes either $\mathcal{H}$ or the identity (no entry):
  \begin{align*}
  \MoveEqLeft
    \big| \mathbb{E}^{\vec\theta} \big\langle F^{\mathcal{T}} (\vec\theta), G (\vec\theta)
    \big\rangle_{X, X^{\ast}} \big| 
     =  \big| \mathbb{E}^{\psi} \mathbb{E}^{\vec\theta}
    \big\langle F^{\mathcal{T}} (\vec\theta), G (\vec\theta) \big\rangle_{X, X^{\ast}} \big|\\
    & =  \big| \mathbb{E}^{\psi} \mathbb{E}^{\vec\theta} \big\langle \Phi_{\vec\theta,
    \vec N}^{\mathcal{T}} (\psi), \Gamma_{\vec\theta,\vec N} (\psi) \big\rangle_{X, X^{\ast}} \big|.
  \end{align*}  
\end{remark}

    The first equality is obvious as the integrand does not depend upon $\psi$. 
  We explain the second equality above. It follows immediately if we observe that $|\mathbb{E}^{\theta}  
  \langle \phi^{\mathcal{T}}_{\vec\theta,\vec N} (\psi), \Gamma_{\vec\theta,\vec N} (\psi)
  \rangle_{X, X^{\ast}} |$ does not depend upon $\vec N$ or $\psi$. Indeed, the
  terms that arise are of the form
  \[ \big\langle \mathd F_k^{\sigma} (\vec{\theta}_k + \vec{n}_k \psi)\mathcal{T} \varphi^{\sigma} (\theta_{k + 1} + n_{k + 1} \psi),
     \mathd G_l^{\eta} (\vec{\theta}_l + \vec{n}_l \psi)
     \varphi^{\eta} (\theta_{l + 1} + n_{l + 1} \psi) \big\rangle_{X, X^{\ast}} .
  \]
  A successive integration in the
  $\theta_k$'s shows by periodicity of the involved functions an
  independence from $\psi$ and $\vec N$.

  We now use these observations together with Remark \ref{theta-and-psi}, Lemma \ref{moveH}, and Remark \ref{x-and-theta} to continue our estimate
  \begin{align*}
  \MoveEqLeft
    \big|\mathbb{E}^{\vec\theta} \big\langle F^{\mathcal{H}} (\vec\theta), G
  (\vec\theta) \big\rangle_{X, X^{\ast}} \big|
     = 
    \big| \mathbb{E}^{\psi} \mathbb{E}^{\vec\theta} \big\langle \Phi_{\vec\theta,\vec
    N}^{\mathcal{H}} (\psi), \Gamma_{\vec\theta,\vec N} (\psi) \big\rangle_{X, X^{\ast}} \big|
    \\
     =& 
    \big| \mathbb{E}^{\psi} \mathbb{E}^{\vec\theta} \big\langle \mathcal{H}_{\psi}
    \Phi_{\vec\theta,\vec N} (\psi), \Gamma_{\vec\theta,\vec N} (\psi) \big\rangle_{X, X^{\ast}}\big|
    \\
     \leqslant & h_p \big(\mathbb{E}^{\psi} \mathbb{E}^{\vec\theta}|\Phi_{\vec\theta,\vec N} (\psi)|_X^p\big)^{1/p}
    \ \big(\mathbb{E}^{\psi} \mathbb{E}^{\vec\theta}|\Gamma_{\vec\theta,\vec N} (\psi)|_{X^{\ast}}^q\big)^{1/q}
    \\
	 = & 
    h_p
    \big(\mathbb{E}^{\vec\theta} |F (\vec\theta)|_X^p\big)^{1/p}
    \ \big(\mathbb{E}^{\vec\theta} |G (\vec\theta)|_{X^{\ast}}^q\big)^{1/q}
     =  h_p\ \| F \|_{L_X^p} \| G \|_{L_{X^{\ast}}^q}.
  \end{align*}
This concludes the proof of Lemma \ref{lemma-inside-H}.
\end{proof*}

\paragraph{The final estimate in the weak form}

For $k, l \geqslant 0$ we consider terms of the form
\[\mathbb{E}^{\vec{\theta}}
 \big\langle \mathd F_k^{\sigma} (\vec{\theta}_k )\mathcal{H} \varphi^{\sigma} (\theta_{k + 1} ),
     \mathd G_l^{\eta} (\vec{\theta}_l )
     \varphi^{\eta} (\theta_{l + 1} ) \big\rangle_{X, X^{\ast}} 
\]
and 
\[\mathbb{E}^{\vec{\theta}}
\big\langle  \mathd F^{\sigma}_k(\vec{\theta}_k)
  \Sha\varphi^{\sigma} (\theta_{k + 1}), \mathd G_l^{\eta} (\vec\theta_l) \varphi^{\eta}
  (\theta_{l + 1}) \big\rangle_{X, X^{\ast}}.
\]

Concerning the dualized form involving the Hilbert transform we first note that if $l\neq k$ begin by integrating in $\theta_{s+1}$ with $s=\max\{k,l\}$ and observe that the contribution is 0 as $\mathcal{H}\varphi^\sigma$ and $\varphi^\sigma$ have mean 0. If $l=k$ begin by integrating in $\theta_{k+1}$ and observe that 
  $\mathcal{H}\varphi^\sigma \cdot \varphi^{\sigma}$ still has mean zero and these cases yield no contribution.
  The only arising terms are thus those with $k=l$ and $\bar\sigma=\eta$. 

Concerning the dualized form involving $\Sha$ we again note that there is no contribution if $l\neq k$ by integrating first with respect to $\theta_{s+1}$. Since $\Sha \varphi^\sigma=\sigma\Sha\varphi^{\bar\sigma}$ , we see that again the only arising terms are those with $k=l$ and $\bar\sigma=\eta$.
Thus by Lemma \ref{lemma-projection-hilbert} and Remark \ref{flattening-hilbert}
\begin{align*}
  \MoveEqLeft \mathbb{E}^{\theta_{k + 1}}  (\mathcal{H} \varphi^{\sigma})  (\theta_{k +
  1}) \varphi^{\eta}  (\theta_{k + 1}) 
   =  \mathbb{E}^{\theta_{k + 1}}  (\mathcal{P}
  \mathcal{H} \varphi^{\sigma})  (\theta_{k + 1}) \varphi^{\eta}  (\theta_{k +
  1})\\
   =&  c_0 \mathbb{E}^{\theta_{k + 1}}  (\Sha \varphi^{\sigma}) 
  (\theta_{k + 1}) \varphi^{\eta}  (\theta_{k + 1}).
\end{align*}
This is a decisive feature of the similarity between $\Sha$ and $\mathcal{H}$.
We have thus
$  \mathbb{E}^{\theta}  \langle F^{\mathcal{H}} (\theta), G (\theta)
   \rangle_{X, X^{\ast}} = c_0 \mathbb{E}^{\theta}  \langle \Sha F
   (\theta), G (\theta) \rangle_{X, X^{\ast}}  $, when $\langle f \rangle_{I_0} = 0$ and $(f, h_{I_0}) =
0$
This implies, using Remark \ref{x-and-theta}, and assuming $\langle f \rangle_{I_0} = 0$
and $(f, h_{I_0}) = 0$, that
\begin{align*}
\MoveEqLeft \big| \mathbb{E}^x \big\langle \Sha f (x), g (x) \big\rangle_{X, X^{\ast}} \big| 
  =  \big|\mathbb{E}^{\vec\theta} \big\langle \Sha F (\vec\theta), G (\vec\theta)
  \big\rangle_{X, X^{\ast}} \big|   \\
  = & c^{- 1}_0  \big|\mathbb{E}^{\vec\theta} \big\langle F^{\mathcal{H}} (\vec\theta), G
  (\vec\theta) \big\rangle_{X, X^{\ast}} \big| 
  \leqslant  h_p c^{- 1}_0 \|F\|_{L_{X,\vec\theta}^p} \|G\|_{L_{X^{\ast},\vec\theta}^q} \\
  = & h_p c^{- 1}_0 \|f\|_{L_{X}^p} \|g\|_{L_{X^{\ast}}^q}.
\end{align*}
To finish the estimate for a general $f$ defined on $I_0$,
extend this function by zero on a large dyadic interval $J\supset I_0$.
Setting $\tilde{f} = f - \langle f \rangle_{J} \tmmathbf{1}_{J} - (f, h_{J})
h_{J} $ and recalling the proof of Theorem \ref{T: comparison Lp norms} ensures that
$\| \Sha^J f \|_{L_X^p} = \| \Sha^J \tilde{f} \|_{L_X^p} + O(1/|J|)$.
Since all operators $\Sha^J$ have the same $L^p$ norm,
this yields the desired estimate $s_p \leqslant h_p c^{- 1}_0$ in the limit $|J|\to\infty$.
This completes the proof of Theorem \ref{theorem_lower}.
%
\section{The constant $c_0$}\label{S: Constant_c0}
In order to estimate the constant $c_0$, we are using the identity
$ 
\mathcal{P} \mathcal{H} \varphi^- = - c_0 \varphi^+ = c_0
     \Sha \varphi^-.
$ 
Let $\beta(x) = \tmmathbf{1}_{[0,\pi)}$. We have $\varphi^-(x) = \beta(x) - \beta(x+\pi) = 2 \beta(x) - 1$,
thus $\mathcal{H}\varphi^- = 2 \mathcal{H}\beta$, that is
\[
\mathcal{H}\varphi^-(x) = \frac{1}{2\pi} p.v. \int_0^{\pi} \cot\big( \frac{x-t}{2} \big) \mathd t.
\]
Observe that
$ 
 \frac{\partial}{\partial t}\left( -2 \log \left| \sin \left( \frac{x-t}{2} \right) \right| \right)
	=
	\cot\left( \frac{x-t}{2} \right).
$ 
In a principal value sense, one calculates that
$ 
	\mathcal{H} \varphi^-(x) = \frac{2}{\pi} \log \tan\left( \frac{x}{2} \right)
$ 
when $x\in(0,\pi/2)$. It remains to calculate the following mean on $(0,\pi/2)$,
\[
	\frac{1}{\pi/2} \int_0^{\pi/2}
		\frac{2}{\pi} \log \tan\big( \frac{x}{2} \big) \mathd x
	=
	-8 G / \pi^2,
\]
where $G$ denotes the Catalan constant $G=\sum_{k=0}^\infty (-1)^k \frac{1}{(2k+1)^2} \sim 0.91597$.
Comparing with $\varphi^+(x) = 1$ on $(0,\pi/2)$ yields
$ 
	c_0 = 8 G / \pi^2 \sim 0.742454$ and $ c_0^{-1} \sim 1.34689.
$ 
\begin{figure}[ht]
	\centering
	\includegraphics[width=6cm]{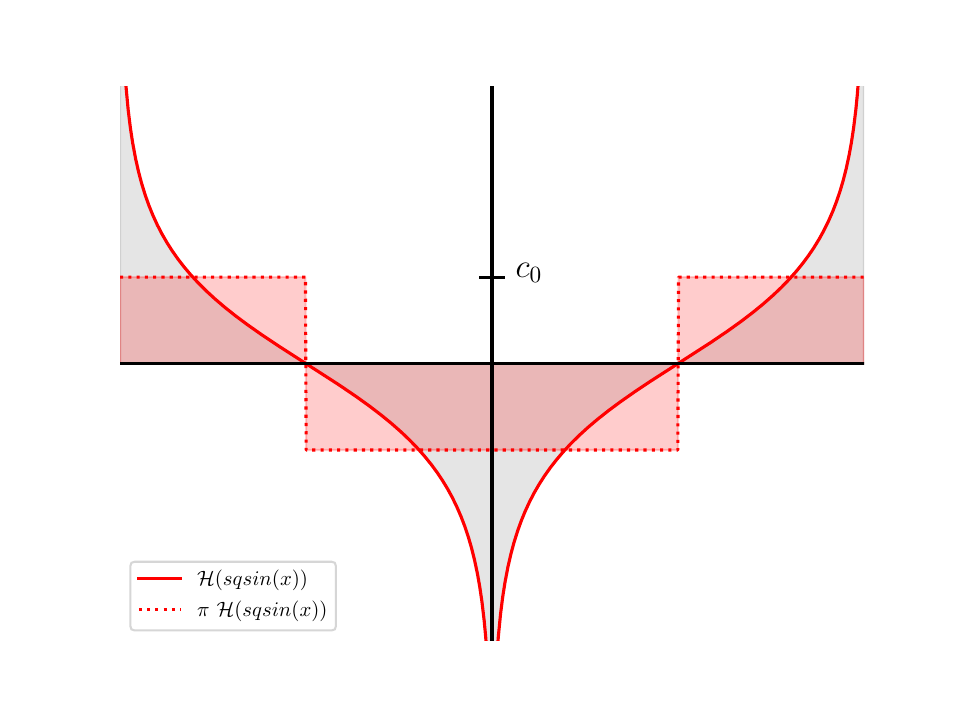}
	\caption{The constant $c_0$.}
\end{figure}
\section{Averaging of the dyadic Hilbert transform}\label{S: averaging dyadic Hilbert transform}
We prove that the average of the operators $\Sha$ in the
sense of {\cite{Pet2000}} is null. For that, let $\mathcal{D}^{\alpha, r} =
\{ 2^r I + \alpha : I \in \mathcal{D} \}$ the dilated and translated dyadic
grid on $\mathbb{R}$. Here, let $1 \leqslant r < 2$ and $\alpha \in \mathbb{R}$. Denote by
$h^{\alpha, r}_I$ the corresponding $L^2$-normalized Haar functions and by
$\mathcal{S}^{\alpha, r}_0$ the dyadic Hilbert transform associated to
$\mathcal{D}^{\alpha, r}$. Since in the usual sense,
\[ \mathcal{S}^{\alpha, r}_0 : f (x) \mapsto \sum_{I \in \mathcal{D}^{\alpha,
   r}} \big[- ( f, h^{\alpha, r}_{I_-} ) h^{\alpha, r}_{I_+} (x) +
   ( f, h^{\alpha, r}_{I_+} ) h^{\alpha, r}_{I_-} (x)\big], \]
the kernel of $\mathcal{S}^{\alpha, r}_0$ is
\[ K^{\alpha, r}_0 (t, x) = \sum_{I \in \mathcal{D}^{\alpha, r}} \big[- h^{\alpha,
   r}_{I_-} (t) h^{\alpha, r}_{I_+} (x) + h^{\alpha, r}_{I_+} (t) h^{\alpha,
   r}_{I_-} (x)\big] . \]
For each fixed $I$ the product $- h^{\alpha,
   r}_{I_-} (t) h^{\alpha, r}_{I_+} (x) + h^{\alpha, r}_{I_+} (t) h^{\alpha,
   r}_{I_-} (x)$ is supported in $I_-\times I_+ \cup I_+\times I_-   \subset I\times I$. 
   Below is the illustration of the sign distribution of the two products of Haar functions in $I\times I$. 

\begin{figure}[ht]
	\centering
	\includegraphics[width=2.93cm,height=2.43cm]{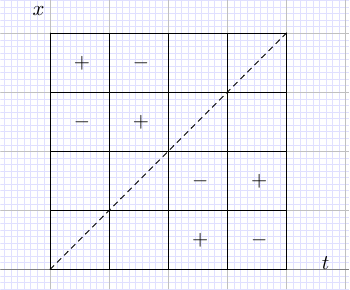}
	\caption{sign distribution}
\end{figure}

Following the strategy of the second author in {\cite{Pet2000}}, the average
of the kernel by dilation and translation we consider is:
\[ \mathbb{E}_r \mathbb{E}_{\alpha} K^{\alpha, r}_0 (t, x) = \frac{1}{\log 2}
   \int^2_1 \lim_{R \rightarrow \infty} \frac{1}{2 R} \int^R_{- R} K^{\alpha,
   r}_0 (t, x) \mathd \alpha \frac{\mathd r}{r} . \]
Via the explanations in {\cite{Pet2000}}, the resulting average depends upon $t-x$, is antisymmetric and of homogeneity $-1$, such as uniquely satisfied by the function $\frac{c}{x-t}$ for some $c\in \mathbb{R}$. This can be verified also by an explicit calculation. In {\cite{Pet2000}} it was vital that $c\neq 0$. To show this, the inner average over translations was computed, which was sufficient to conclude.

The average over translations can be visualized by moving the above square along the diagonal. 
The result of the expectation over translations can then be determined by linearly interpolating the values at distances $0$, $\frac1{4\sqrt{2}}  | I |$, $\frac1{2\sqrt{2}}  | I |$, $\frac3{4\sqrt{2}}  | I |$, $\frac1{\sqrt{2}}  | I |$ from the diagonal $x=t$ and then summing up the contributions. The average over dilations can then be calculated second, if desired. It is somewhat easier however to calculate the average over dilations first such as done in \cite{PTV}. The proof we present here however, does not require any calculation. 
\begin{lemma}
  We have $\mathbb{E}_r \mathbb{E}_{\alpha} K^{\alpha, r}_0 (t, x) = 0.$
\end{lemma}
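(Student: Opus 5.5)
We want a symmetry argument that needs no computation. We already know from the discussion preceding the lemma, via the arguments of \cite{Pet2000}, that the averaged kernel
\[
\mathbb{E}_r \mathbb{E}_{\alpha} K^{\alpha, r}_0 (t, x)
\]
depends only on $x-t$, is antisymmetric, and is homogeneous of degree $-1$. It is therefore $c/(x-t)$ for some $c\in\mathbb{R}$, and it suffices to show $c=0$. We will show that the averaged kernel is also \emph{symmetric} in $(t,x)$. A function that is both symmetric and antisymmetric vanishes.

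\textbf{Step 1: reflect each summand across the midpoint of $I$.} Fix $I$ and let $\rho_I$ be the reflection across the midpoint of $I$. It swaps $I_-$ and $I_+$, and one checks $h_{I_\pm}\circ\rho_I = -h_{I_\mp}$: the reflection exchanges the two children and reverses left/right inside each child. The summand is
\[
k_I(t,x) = -h_{I_-}(t)h_{I_+}(x) + h_{I_+}(t)h_{I_-}(x).
\]
Applying $\rho_I$ to both variables gives
\[
k_I(\rho_I t,\rho_I x) = -h_{I_+}(t)h_{I_-}(x) + h_{I_-}(t)h_{I_+}(x) = -k_I(t,x).
\]
Since $k_I$ is also antisymmetric, $k_I(\rho_I t,\rho_I x)=k_I(x,t)$. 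Equivalently, on $I\times I$ the map $(t,x)\mapsto(x,t)$ coincides with the point reflection $(t,x)\mapsto(\rho_I x,\rho_I t)$ of the square through its center. So the sign pattern in the figure is invariant under the antidiagonal reflection composed with the transposition.

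\textbf{Step 2: pass to the average.} The translation average moves the square along the diagonal, and the dilation average rescales it. For a single square $Q=I\times I$, the average of $k_I$ over translations $\alpha$ gives a function of $x-t$ only: it is the integral of $k_I$ along lines parallel to the diagonal. Call this $\Phi_I(x-t)$.

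The lines at signed distance $s$ and $-s$ from the diagonal are exchanged by the transposition. They are also each mapped to themselves by the point reflection through the center of $Q$, which reverses orientation along each such line. Integrating the identity $k_I(x,t)=k_I(\rho_I t,\rho_I x)$ along the line $\{x-t=s\}$, and changing variables by this reflection, gives $\Phi_I(-s)=\Phi_I(s)$. Here we use that the point reflection preserves $x-t$.

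This is where the care lies. A cleaner way to state it: the point reflection through the center maps $\{x-t=s\}$ to $\{x-t=s\}$, because $(\rho x)-(\rho t)=-(x-t)$ when both coordinates are reflected, and the swap turns this back into $s$. So the identity of Step 1, combined with the pure identity $k_I(\rho t,\rho x)=-k_I(t,x)$, yields two relations. Along the line $x-t=s$, the composite map $(t,x)\mapsto(\rho x,\rho t)$ is measure preserving, which gives $\Phi_I(s)=\Phi_I(-s)$. Antisymmetry gives $\Phi_I(s)=-\Phi_I(-s)$. Hence $\Phi_I\equiv 0$ for each scale, and the dilation average is then trivially zero.

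\textbf{Main obstacle.} The main obstacle is bookkeeping which involution preserves which family of lines. I would verify it concretely on the sign picture: $k_I=-1$ on $I_-\times I_+$ crossed with the internal signs, and similarly on $I_+\times I_-$. On each line $x-t=s$, the positive and negative portions of $I_-\times I_+$ should have equal length, and the same should hold on $I_+\times I_-$. This is a direct check of the figure, and it confirms that the translation average vanishes already before the dilation average. Given the earlier interchange of limit and sum justified in \cite{Pet2000}, this yields the lemma.
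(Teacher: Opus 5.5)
\textbf{There is a genuine gap: Step 2 proves nothing, and its conclusion is false.}

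\emph{The symmetry step is a tautology.} The map $(t,x)\mapsto(\rho_I x,\rho_I t)$ preserves $x-t$, because $\rho_I t-\rho_I x=x-t$. So integrating the invariance $k_I(t,x)=k_I(\rho_I x,\rho_I t)$ along $\{x-t=s\}$ only gives $\Phi_I(s)=\Phi_I(s)$, not $\Phi_I(s)=\Phi_I(-s)$. (The point reflection $(t,x)\mapsto(\rho_I t,\rho_I x)$ does \emph{not} fix these lines; it sends $s$ to $-s$.) The two symmetries that exchange $s$ and $-s$ are the transposition and that point reflection. Both change the sign of $k_I$, so all you can extract is $\Phi_I(-s)=-\Phi_I(s)$, which you already had.

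\emph{The per-scale vanishing is false.} On the line $x-t=|I|/2$ only $I_-\times I_+$ contributes. There $h_{I_+}(t+|I|/2)=h_{I_-}(t)$, so $-h_{I_-}(t)h_{I_+}(t+|I|/2)=-2/|I|$ for every $t\in I_-$. Hence $\Phi_I(|I|/2)=-\int h_{I_-}^2=-1\neq 0$. Your ``direct check of the figure'' therefore fails. The translation average at a fixed scale is a nonzero piecewise linear function of $x-t$; this is the interpolation the paper describes just before the lemma.

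\emph{The cancellation needs the dilation average.} With $s=x-t$, write the translation average at scale $L$ as $L^{-1}\Phi(s/L)$. The logarithmic average over $L$ then equals $\frac{1}{s\log 2}\int_0^\infty\Phi(\sigma)\,\mathd\sigma$ for $s>0$, and analogously with $\int_{-\infty}^0\Phi$ for $s<0$. On $\{x>t\}$ only the summand $-h_{I_-}(t)h_{I_+}(x)$ lives, so
\[
\int_0^\infty\Phi=-\Big(\int h_{I_-}\Big)\Big(\int h_{I_+}\Big)=0 .
\]
The same holds on the other half-line. So what you are missing is the mean-zero property of the Haar functions combined with the scale average. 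No symmetry of a single square can supply it, since an antisymmetric kernel's symmetries only reproduce antisymmetry.

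\emph{How the paper gets there without computing.} It splits $K_0=K_-+K_+$ into the parts below and above the diagonal. Each summand of $K_--K_+$ is symmetric in $(t,x)$, so the average of $K_--K_+$ is symmetric, translation and dilation invariant, hence of the form $c/|x-t|$. The paper argues that this must vanish. Because $K_-$ and $K_+$ have disjoint supports, the averages of $K_-$ and $K_+$ then vanish separately, and so does the average of their sum $K_0$. The key move is flipping the sign of one half to turn the antisymmetric kernel into a symmetric one; your argument lacks any step of this kind.
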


\begin{proof}
  The kernel $K^{\alpha, r}_0 (t, x)$ can be split into a sum of
  partial kernels
  \[ K_-^{\alpha, r} (t, x) = \sum_{I \in \mathcal{D}^{\alpha, r}} h^{\alpha,
     r}_{I_+} (t) h^{\alpha, r}_{I_-} (x) ,
  \qquad
   K_+^{\alpha, r} (t, x) = \sum_{I \in \mathcal{D}^{\alpha, r}} -
     h^{\alpha, r}_{I_-} (t) h^{\alpha, r}_{I_+} (x) . \]
  $K_-^{\alpha, r} (t, x)$ is supported under the diagonal, $t > x$, while
  $K_+^{\alpha, r} (t, x)$ is supported over the diagonal, $t < x$. Clearly
  the averages produce no mass on the diagonal. Observe now that the average
  of the {\it even} kernel $K_-^{\alpha, r} (t, x) - K_+^{\alpha, r} (t, x)$ is also depending upon $x-t$, has homogeneity $-1$ but is symmetric, thus characterised by $\frac{c}{|x-t|}$. The resulting kernel operator is thus dilation invariant, translation invariant and symmetric. It therefore represents the zero operator and $c=0$. We may then conclude that the averages of the partial kernels themselves are 0 because they are confined to separate supports split by the diagonal $x=t$. The claim follows.
\end{proof}


\begin{thebibliography}{X}

 \bibitem{BK} R.~Ba{\~n}uelos, M.~Kwa\'{s}nicki.
{\em On the $\ell_p$ norm of the discrete Hilbert transform},
Duke Math. J. 168(3): pp. 471--504, 2019.

 \bibitem{BanWan1996}R.~Ba{\~n}uelos, G.~Wang.
  {\em Orthogonal martingales under differential subordination and
  applications to Riesz transforms}, I
  llinois J. Math., 40(4): pp. 678--691, 1996.


\bibitem{Bou1983a} J.~Bourgain. 
{\em Some remarks on Banach spaces in which martingale difference sequences are unconditional},
Ark. Mat. 21(2): pp. 163--168, 1983.


 \bibitem{Bur1983a}D.~L.~Burkholder. 
  {\em A geometric condition that implies the existence of certain singular integrals of Banach-space-valued functions}, 
  Conference on harmonic analysis in honor of Antoni Zygmund, Vol. I, II (Chicago, Ill., 1981), Wadsworth Math. Ser., 
    270--286. Wadsworth, Belmont, CA, 1983.
    
    
\bibitem{Bur1984a} D.~L.~Burkholder.
{\em Boundary value problems and sharp inequalities for martingale transforms},
Ann. Probab., 12(3): pp. 647--702, 1984.
    
 \bibitem{Bur2001a} D.~L.~Burkholder.
 {\em Martingales and singular integrals in Banach spaces}, 
 Handbook of the geometry of Banach spaces, Vol. I,
 North-Holland, Amsterdam, 2001.
 
 \bibitem{DomPet2014} K.~Domelevo, S.~Petermichl.
{\em Sharp {$L^p$} estimates for discrete second order {R}iesz transforms},
Adv. Math., 262: pp. 932--952, 2014.

\bibitem{DPTV} K.~Domelevo, S.~Petermichl, S.~Treil, A.~Volberg.
{\em The matrix $A_2$ conjecture fails, i.e. $3/2>1$},
arXiv:2402.06961, pp. 1--46, 2024.
 
 \bibitem{Ess1984} M.~Ess{{\'e}}n.
{\em A superharmonic proof of the {M}. {R}iesz conjugate function theorem},
Ark. Mat., 22(2): pp. 241--249, 1984.
 
 \bibitem{Fig1990} T.~Figiel.
{\em Singular integral operators: a martingale approach}, 
London Math. Soc. Lecture Note Ser., 158,
Cambridge Univ. Press, Cambridge, 1990.

  \bibitem{GeiMonSak2010} S.~Geiss, S.~Montgomery-Smith, E.~Saksman.
 {\em On singular integral and martingale transforms}, 
  Trans. Amer. Math. Soc., 362(2): pp. 553--575, 2010.
  
 \bibitem{H}  T.~Hyt\"{o}nen.
  {\em The sharp weighted bound for general Calder\'{o}n–Zygmund operators}
Ann. Math. 175(3): pp. 1473--1506, 2012.

\bibitem{HytNeeVerWei2016a}T.~Hyt\"{o}nen, J.~van~Neerven, M.~Veraar, L.~Weis.
{\em Analysis in Banach spaces}, 
Vol. I. Martingales and
  Littlewood-Paley theory, volume~63 of Ergebnisse der Mathematik und
  ihrer Grenzgebiete. 3. Folge. Springer, Cham, 2016.
  
  \bibitem{KT} S.~Kakaroumpas, S.~Treil. 
  {\em ``{S}mall step'' remodeling and counterexamples for weighted estimates with arbitrarily ``smooth'' weights}, 
  Adv. Math., 376: pp. 1--52, 2021. 
  
   \bibitem{KloPla1992} P.~E.~Kloeden, E.~Platen.
{\em Numerical solution of stochastic differential equations},  
volume~23  of Applications of Mathematics (New York). Springer-Verlag, Berlin, 1992.
  
 \bibitem{Lacey} M.~T.~Lacey. 
 {\em Two-weight inequality for the Hilbert transform: A real variable characterization, II}, 
Duke Math. J. 163(15): pp. 2821--2840, 2014.
  
  \bibitem{Law2010a}G.~F.~Lawler. 
  {\em Random walk and the heat equation},  
  volume~55  of Student Mathematical Library.
 American Mathematical Society, Providence, RI, 2010.
  
\bibitem{Nazarov} F.~Nazarov. 
{\em A counterexample to Sarason's conjecture}, unpublished manuscript, available at \url{http://users.math.msu.edu/users/fedja/prepr.html}
  
  \bibitem{NPTV} F.~Nazarov, G.~Pisier, S.~Treil, A.~Volberg. 
  {\em Sharp estimates in vector Carleson imbedding theorem and for vector paraproducts}, 
  J. reine angew. Math. 542: pp. 147--171, 2002.
  
    \bibitem{NTVnonhom} F.~Nazarov, S.~Treil, A.~Volberg. 
 {\em  The Tb-theorem on non-homogeneous spaces},
Acta Math. 190(2): pp. 151--239, 2003. 
  
 \bibitem{Pet2000}S.~Petermichl.
   {\em Dyadic shift and a logarithmic estimate for Hankel operators with matrix symbol},
   C. R. Acad. Sci. Paris S\'er. I Math., 330(6): pp. 455--460, 2000.

  \bibitem{PetPot2003a}S.~Petermichl, S.~Pott. 
  {\em A version of Burkholder's theorem for operator-weighted spaces},
  Proc. Amer. Math. Soc., 131(11): pp. 3457--3461, 2003.
  
  \bibitem{Pet2007}  S.~Petermichl. 
  {\em The sharp bound for the Hilbert transform on weighted Lebesgue spaces in terms of the classical $A_p$ characteristic}, 
  Amer. J. Math. 129, no. 5, pp. 1355--1375, 2007.
  
  \bibitem{PTV} S.~Petermichl, S.~Treil, A.~Volberg.
  {\em Why the Riesz transforms are averages of the dyadic shifts?}
Publ. Mat. 46, pp. 209--228, 2002.

\bibitem{PV2002}  S.~Petermichl, A.~Volberg. 
{\em Heating of the Ahlfors-Beurling operator: weakly quasiregular maps on the plane are quasiregular}, 
Duke Math. J. 112, no. 2: pp. 281--305, 2002.

  \bibitem{Pic1972} S.~K.~Pichorides.
{\em On the best values of the constants in the theorems of {M}. {R}iesz,
  {Z}ygmund and {K}olmogorov},
Studia Math., 44: pp. 165--179. (errata insert), 1972.
  
  \bibitem{Pis2016} G.~Pisier.  
  {\em Martingales in Banach spaces}, 
  Cambridge Studies in Advanced Mathematics, vol. 155, Cambridge University Press, Cambridge, 2016.

\bibitem{Pot2014}S.~Pott, A.~Stoica.
{\em Linear bounds for Calder\'on-Zygmund operators with even kernel on UMD spaces},
J. Funct. Anal., 266(5): pp. 3303--3319, 2014. 

 \bibitem{Tal1986n}D.~Talay. 
 {\em Discr{\'e}tisation d'une
  {\'e}quation diff{\'e}rentielle stochastique et calcul approch{\'e}
  d'esp{\'e}rances de fonctionnelles de la solution},
  RAIRO Mod{\'e}l. Math. Anal. Num{\'e}r.,
  20(1): pp. 141--179, 1986.

\bibitem{Tre2013a} S.~Treil.
 {\em Sharp $A_2$ estimates of Haar shifts via Bellman function},
{{Recent trends in analysis. Proceedings of the conference in
  honor of Nikolai Nikolski on the occasion of his 70th birthday}}, Bucharest: The
  Theta Foundation, pp. 187--208, 2013.
  
  \bibitem{TreVol1997} S.~Treil, A.~Volberg.
{\em Wavelets and the angle between past and future},
J. Funct. Anal., 143(2): pp. 269--308, 1997.


\end{thebibliography}
\end{document}